\definecolor{Red}{cmyk}{0,1,1,0}
\definecolor{verde}{cmyk}{1,0,1,0}
\definecolor{loka}{cmyk}{.5,0,1,.5}
\definecolor{azul}{cmyk}{1,1,0,0}
\numberwithin{equation}{section}
\newcommand{\eqd}{\stackrel{\tiny d}{=}}
\def\Ed{{\mathbb{E}}}
\def\Pd{{\mathbb{P}}}
\newcommand{\N}{\mathbb{N}}
\newcommand{\R}{\mathbb{R}}
\renewcommand{\P}{\mathbb{P}}
\renewcommand{\a}{\alpha}
\renewcommand{\b}{\beta}
\newcommand{\g}{\gamma}
\newcommand{\e}{\varepsilon}
\newcommand{\s}{\sigma}
\renewcommand{\l}{\lambda}
\newcommand{\be}{\begin{equation}}
\newcommand{\ee}{\end{equation}}
\newtheorem{theorem}{Theorem}
\newtheorem{proposition}{Proposition}
\newtheorem{definition}{Definition}
\newtheorem{lemma}{Lemma}
\newtheorem{corollary}[equation]{Corollary}
\newtheorem{obs}{Remark}
\title{Preferential Attachment Random Graphs with Edge-Step Functions}
\author{Caio Alves$^1$ }
\address{$^1$ Institute of Mathematics, University of Leipzig -- Augustusplatz 10, 04109, Leipzig,  Germany\newline
e-mail: {\itshape \texttt{caio.alves@math.uni-leipzig.de}}}
\author{Rodrigo Ribeiro$^{2}$}
\address{$^2$ IMPA, Estrada Da. Castorina, 110 CEP 22460-320 Rio de Janeiro, RJ, Brazil.
\newline
e-mail: {\itshape \texttt{rribeiro@impa.br}}}
\author{R{\'e}my Sanchis$^3$}
\address{$^3$Departamento de Matem{\'a}tica, Universidade Federal de Minas Gerais, Av. Ant\^onio
Carlos 6627 C.P. 702 CEP 30123-970 Belo Horizonte-MG, Brazil
\newline
e-mail: {\itshape \texttt{rsanchis@mat.ufmg.br}}}
\date{\today \\
    $^1$ Institute of Mathematics, University of Leipzig\\
    $^2$ IMPA, Instituto de Matem\'atica Pura e Aplicada. \\
	$^3$ Departamento de Matem{\'a}tica, Universidade Federal de Minas Gerais
}
\begin{document}

\begin{abstract} We propose a random graph model with preferential attachment rule and \textit{edge-step functions} that govern the growth rate of the vertex set. We study the effect of these functions on the empirical degree distribution of these random graphs. More specifically, we prove that when the edge-step function $f$ is a \textit{monotone regularly varying function} at infinity, the sequence of graphs associated to it obeys a power-law degree distribution whose exponent is related to the index of regular variation of $f$ at infinity whenever said index is greater than~$-1$. When the regularly variation index is less than or equal to~$-1$, we show that the proportion of vertices with degree smaller than any given constant goes to~$0$ a.\ s..
\vskip.5cm
\noindent
\emph{Keywords}: complex networks; preferential attachment; concentration bounds; power-law; scale-free; karamata's theory, regularly varying functions
\newline 
MSC 2010 subject classifications. Primary 05C82; Secondary  60K40, 68R10
\end{abstract}
\maketitle
\section{Introduction}

In the late 1990s the seminal works of Strogatz and Watts \cite{SW98} and of \'Albert and Barab\'asi~\cite{BA99} brought to light two common features shared by real-life networks: \textit{small diameter} and \textit{power-law} degree distribution. In the first work the authors observed that large-scale networks of biological, social and technological origins presented diameters of much smaller order than the order of the entire network, a phenomenon they  called \textit{small-world}. In the second paper, the authors noted that the fraction of nodes having degree $d$ decays roughly as $d^{-\b}$ for some $\b > 1$, a feature known as \textit{scale-freeness}.

These findings motivated the task of proposing and investigating random graph models capable of capturing the two aforementioned features as well as other properties, such as large clique number \cite{ARS17} and large maximum degree \cite{M05}. The interested reader may be directed to \cite{CLBook, DBook, HBook} for a summary of rigorous results for many different models.

Usually the models proposed over the years are generative, in the sense that at each step~$t$ one obtain the random graph $G_t$ by performing some stochastic operation on $G_{t-1}$. In the well known Barab\'asi - \'Albert model \cite{BA99}, the stochastic operation consists of at each step a new vertex being added and a neighbor to it chosen among the previous vertices with probability proportional to its degree. This simple attachment rule, which is known as \textit{preferential attachment}, or PA-rule for short, is capable of producing graphs whose empirical degree distribution is well approximated by a power-law distribution with exponent $\b=3$. Many variants of the original B-A preferential attachment \cite{CLBook, CD18,DO14, KH02, MP14} have been introduced. These models also are capable of exhibiting power-law with different values of $\b$ and small-world phenomenon. 

In the remainder of this introduction we define our model in the next subsection and discuss the questions we have addressed in this paper. We end this section settling down some conventions and notations and explaining the paper's structure.

\subsection{The preferential attachment scheme with edge-step functions} The model we propose here has one parameter: a real non-negative function $f$ with domain given by the semi-line $[1, \infty)$ such that $||f||_{\infty} \le 1$. For the sake of simplicity, we start the process from an initial graph $G_1$ which is taken to be the graph with one vertex and one loop. We consider the two stochastic operations below that can be performed on any graph $G$: 
\begin{itemize}
	\item \textit{Vertex-step} - Add a new vertex $v$ and add an edge $\{u,v\}$ by choosing $u\in G$ with probability proportional to its degree. More formally, conditionally on $G$, the probability of attaching $v$ to $u \in G$ is given by
	\begin{equation}\label{def:PArule}
	P\left( v \rightarrow u \middle | G\right) = \frac{\mathrm{degree}(u)}{\sum_{w \in G}\mathrm{degree}(w)}.
	\end{equation}
	\item \textit{Edge-step} - Add a new edge $\{u_1,u_2\}$ by independently choosing vertices $u_1,u_2\in G$ according to the same rule described in the vertex-step. We note that both loops and parallel edges are allowed.
	
\end{itemize}
We consider a sequence $\{Z_t\}_{t\ge 1}$ of independent random variables such that $Z_t\eqd \mathrm{Ber}(f(t))$. We then define inductively a random graph process $\{G_t(f)\}_{t \ge 1}$  as follows: start with~$G_1$. Given $G_{t}(f)$, obtain $G_{t+1}(f)$ by either performing a \textit{vertex-step} on $G_t(f)$ when $Z_t=1$ or performing an \textit{edge-step} on $G_t(f)$ when $Z_t=0$.

We will call the function~$f$ by \textbf{\textit{edge-step function}}, though we follow an edge-step at time~$t$ with probability $1-f(t)$.

\subsection{Growth rate of the vertex-set}
For a fixed edge-step function $f$, our process generates a sequence of random (multi)graphs $\{G_t(f)\}_{t=1}^{\infty}$. The total vertices in $G_t(f)$ is also random and we let $V_t(f)$ denote this quantity.

Regarding the order of $G_t(f)$, in the vast majority of the preferential attachment random graph models it grows linearly with $t$, meaning that $V_t = \Theta(t)$, \textit{w.h.p} or deterministically depending on the model. For modeling purposes a sub-linear growth and some control over the growth rate of the vertex-set may be desirable, since in many real-world networks the rate of newborn nodes decreases with time while new connections continue be created with a high rate, e.g. Facebook and other social medias. In our setup, this may be achieved by choosing $f$ such that $f(t) \searrow 0$ as $t$ goes to infinity.

\subsection{The empirical degree distribution} Given a vertex $v$ in $G_t(f)$, we let $D_t(v)$ be its degree in $G_t(f)$. In this work we focus on the empirical degree distribution 
\begin{equation}
\hat{P}_t(d,f) := \frac{1}{V_t(f)}\sum_{v \in G_t(f)} \mathbb{1}\{D_t(v) = d\},
\end{equation}
i.e., the random proportion of vertices having degree $d$ in $G_t(f)$. for any $d \in \N$.

In many works, a combination of the preferential attachment rule (\ref{def:PArule}) with other attachment rules \cite{DO14,KH02,T15} proved themselves to be an efficient mechanism for generating graphs where~$\hat{P}_t(d)$ is essentially a powler-law distribution, meaning that, \textit{w.h.p}
\begin{equation}
\hat{P}_t(d) = Cd^{-\beta} \pm o(1),
\end{equation}
for some positive constant $C$ and some exponent $\beta$ generally lying in $(2,3]$. In \cite{CF03}, the authors investigated a very general model whose growth rule involves the case~$f(t) \equiv p$ and the possibility of choosing vertices uniformly instead of preferentially. Their model produces graphs whose empirical degree distribution follows a power-law distribution whose exponent lies in the range $(2,3]$. More specifically, in the particular case of~$f(t) \equiv p$, with $p \in (0,1)$, studied in \cite{CLBook, CF03}, the edge-step functions provided a control over the tail of the power-law distribution producing graphs obeying such laws with a tunable exponent~$\b = 2 +\frac{p}{2-p}$, i.e., they have shown that
\begin{equation}
\hat{P}_t(d) =  Cd^{-2-\frac{p}{2-p}} \pm O\left(\frac{d}{\sqrt{t}}\right),
\end{equation}
\textit{w.h.p}.
As pointed out in \cite{DEHH09}, it may be interesting to investigate models capable of generating graphs with $\beta$ lying in the range $(1,2]$. For instance, see \cite{DEHH09}, where the authors propose a model in which the number of edges added at each step is given by a sequence of independent random variables. This new rule is capable of reducing $\b$ but the vertex set still grows linearly in time, a property we would like to avoid in this paper.

In \cite{JM15}, the authors have introduced a generative model that combines the PA-rule with \textit{spatial proximity}, i.e., the vertices are added on some metric space and the closer the vertices are the more likely they are to become connected. In the paper the authors have addressed the characterization of the empirical degree distribution, proving that it is also well approximated by a power-law.

In our case, one of our results (Theorem~\ref{thm:powerlaw}), shows that for a broad class of functions
\[
\hat{P}_t(d,f) = Cd^{-2+\gamma} \pm O\left(\frac{d}{\sqrt{\int_1^tf(s)\mathrm{d}s}}\right)
\]
where $\gamma \in [0,1)$ depends only on the class $f$ belongs to.

\subsection{Our results}
Our main goal in this paper is to characterize $\hat{P}_t(\cdot,f)$ for a class of edge-step functions as general as possible. More precisely, we would like to obtain a very broad family $\mathfrak{F}$ of functions and a (generalized) distribution over the positive integers $(p(d))_{d \in \N}$ such that, for every fixed $d~\in\N$ and for all $f \in \mathfrak{F}$
\begin{equation}
\left| \hat{P}_t(d,f) - p(d)\right| \le o(1), 	
\end{equation}
with high probability.

The class we investigate here is the class of \textit{regularly varying} functions. A positive function~$f$ is said to be a regular varying function at infinity with \textit{index of regular variation} $\gamma$ if, for all~$a \in \mathbb{R}_+$, the identity below is satisfied
\begin{equation}
\lim_{t \to \infty}\frac{f(at)}{f(t)} = a^{\gamma}.
\end{equation}
In the particular case where $\gamma = 0$, $f$ is said to be a \textit{slowly varying} function. It will be useful to our purposes to recall that if $f$ is a regular varying function with index $\gamma$, the Representation Theorem (Theorem~\ref{thm:repthm}) assures the existence of a slowly varying function $\ell$ such that, for all $t$ in the domain of $f$, $f(t) = \ell(t)t^{\gamma}$.

For each $\gamma \in [0,\infty)$, we take the family $\mathfrak{F}$ to be the a subclass of all regular varying function of index $\gamma$, bounded by one and converging monotonically to zero. In notation, we will focus on functions belonging to the family defined below
\begin{equation}
\mathrm{RES}(-\gamma) := \left \lbrace f:[1, \infty] \longrightarrow [0,1] \; \middle | \; f  \textit{ is continuous, decreases to zero and has index } -\gamma\right \rbrace.
\end{equation}
The goal is to characterize $\hat{P}_t(\cdot,f)$ for all functions in $\mathrm{RES}(-\g)$, for all $\g \in [0,\infty)$. Our results establish a characterization for the empirical distribution depending only on the index $-\gamma$ and show a \textit{phase transition} on $\gamma $ equals $1$, meaning that for all $\gamma$ below this value,~$\hat{P}_t(d,f)$ is well approximated by a power-law whose exponent depends on $\gamma$ only, whereas for $\gamma \ge 1$ the empirical distribution vanishes for all fixed $d$. Specifically, if we let~$(p_{\gamma}(d))_{d \in \N}$ be the (generalized) distribution on $\N$ given by
\begin{equation}
p_{\gamma}(d) := \frac{(1-\g)\Gamma(2-\g)\Gamma(d)}{\Gamma(d+2-\gamma)},
\end{equation}
for $\gamma \in [0,1)$, a consequence of our results is that, for fixed~$d\in\N$, \textit{w.h.p},
\begin{equation}\label{eq:cor}
\left | \hat{P}_t(d,f) - p_{\g}(d) \right | \le o(1)
\end{equation}
for any $f\in \mathrm{RES}(-\g)$, with $\g \in [0,1)$. The error $o(1)$ on (\ref{eq:cor}) may depend on $f$ in an involved way and it is specified combining the estimates given by the two theorems below.

\begin{theorem}\label{thm:supE} Let $f \in \mathrm{RES}(-\g)$ with $\g \in [0,1)$ be such that~$f(t)=t^{-\gamma}\ell(t)$, where~$\ell$ is a slowly varying function. Then there exists a positive constant~$\tilde C = \tilde C(f)$, such that for every~$\alpha\in(0,1)$,
	\begin{equation}\label{eq:supE}
	\sup_{d \leq t} \left \lbrace \left| \Ed \left[V_t(f)\hat{P}_t(d,f)\right] - \Ed\left[V_t(f)\right] \cdot p_{\gamma}(d) \right| \right \rbrace \le \tilde C\cdot\mathrm{err}_t(\alpha,f),
	\end{equation}
	where $\mathrm{err}_t(\alpha,f)$ is defined as
	\begin{equation}
	\mathrm{err}_t(\a,f) := 1+\log t+\ell(t^{\alpha})t^{\alpha(1-\gamma)}+\frac{\ell(t)}{\ell(t^{\alpha})}t^{(1-\alpha)(1-\gamma)}+\sup_{s\geq t^\alpha}\mathcal{H}_{\ell,\gamma}(s)t^{1-\gamma}\ell(t)
	\end{equation}
	and $\mathcal{H}_{\ell,\gamma}(t)$ as the function of $t$ below
	\begin{equation}
	\mathcal{H}_{\ell,\gamma}(t):=\int_{0}^{1} \left| \frac{\ell(ut)}{\ell(t)}-1 \right|u^{-\gamma}\mathrm{d} u.
	\end{equation}
\end{theorem}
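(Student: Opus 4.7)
The plan is to derive and solve a first-order linear recursion for the discrepancy
$\Delta_t(d) := \Ed[V_t(f)\hat P_t(d,f)] - \Ed[V_t(f)]\,p_\gamma(d)$, proceeding by induction on~$d$. Conditioning on $G_t$ and on whether step $t$ is a vertex- or edge-step, and using that the total degree at time $t$ equals $2t$, a one-step analysis of the dynamics gives
\begin{equation*}
m_{t+1}(d) = m_t(d) + f(t)\mathbb{1}\{d=1\} + \frac{2-f(t)}{2t}\bigl[(d-1)m_t(d-1) - d\,m_t(d)\bigr] + E_t(d),
\end{equation*}
where $m_t(d) := \Ed[V_t(f)\hat P_t(d,f)]$ and $E_t(d)$ is a small error from loops and coincident endpoints in edge-steps. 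The key algebraic input is the identity $(d+1-\gamma)p_\gamma(d) = (1-\gamma)\mathbb{1}\{d=1\} + (d-1)p_\gamma(d-1)$, immediate from the definition of $p_\gamma$. Combined with Karamata's theorem applied to $\Ed[V_t(f)] = 1 + \sum_{s<t} f(s)$ via the substitution $s=ut$, which exhibits the deviation from $tf(t)/(1-\gamma)$ as $t^{1-\gamma}\ell(t)\int_{1/t}^1 u^{-\gamma}(\ell(ut)/\ell(t)-1)\,du$, this identity shows that $q_t(d):=\Ed[V_t(f)]p_\gamma(d)$ satisfies the same recursion modulo a source
\[
S_t(d) = (p_\gamma(d)-\mathbb{1}\{d=1\})\Bigl(f(t) - \tfrac{(1-\gamma)(2-f(t))}{2t}\Ed[V_t(f)]\Bigr),
\]
whose absolute value is bounded uniformly in $d$ by $C\bigl(f(t)\mathcal H_{\ell,\gamma}(t) + \ell(t)/t + 1/t\bigr)$.

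Subtracting, $\Delta_t(d)$ satisfies
\[
\Delta_{t+1}(d) = \Delta_t(d)\bigl(1-\tfrac{d(2-f(t))}{2t}\bigr) + \tfrac{(d-1)(2-f(t))}{2t}\Delta_t(d-1) + E_t(d) - S_t(d),
\]
whose integrating-factor solution reads
\[
\Delta_t(d) = \Pi_{t_0,t}(d)\Delta_{t_0}(d) + \sum_{s=t_0}^{t-1}\Pi_{s+1,t}(d)\Bigl[\tfrac{(d-1)(2-f(s))}{2s}\Delta_s(d-1) + E_s(d) - S_s(d)\Bigr],
\]
with $\Pi_{a,b}(d) := \prod_{s=a}^{b-1}(1-d(2-f(s))/(2s)) \asymp (a/b)^d$ up to slowly varying corrections. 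The base case $d=1$ is handled directly since the $\Delta_s(d-1)$ term is absent, and the inductive step plugs in the bound for $d-1$. Choosing the splitting time $t_0 = \lfloor t^\alpha\rfloor$ produces the four ingredients of $\mathrm{err}_t(\alpha,f)$: the initial discrepancy is controlled crudely by $|\Delta_{t_0}(d)| \leq \Ed[V_{t_0}(f)] \lesssim \ell(t^\alpha)t^{\alpha(1-\gamma)}$ (the second term), which after multiplication by $\Pi_{t_0,t}(d)$ and Potter-type estimates on $\ell$ yields $\tfrac{\ell(t)}{\ell(t^\alpha)}t^{(1-\alpha)(1-\gamma)}$; the Karamata-error piece $f(s)\mathcal H_{\ell,\gamma}(s)$ of $S_s$, summed over $s\geq t^\alpha$ against $\Pi_{s+1,t}(d)$ and re-integrated by Karamata, produces $\sup_{s\geq t^\alpha}\mathcal H_{\ell,\gamma}(s)\,t^{1-\gamma}\ell(t)$; and the harmonic pieces $1/s+\ell(s)/s$ together with the loop errors $E_s$ contribute $1+\log t$.

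The main obstacle is ensuring the bound is uniform in $d\leq t$. The recursion is contractive at each inductive step in the sense that, for slowly varying $\Phi$,
\[
\sum_s \tfrac{d-1}{s}\Pi_{s+1,t}(d)\,\Phi(s) \approx \tfrac{d-1}{d}\Phi(t) \leq \Phi(t),
\]
so the propagation through $d\mapsto d+1$ costs only the factor $(d-1)/d$; however, fresh source contributions appear at every level, and a naive accumulation would introduce a linear factor of $d$ in the constant. This is circumvented by working with the rescaled discrepancy $\Delta_t(d)/p_\gamma(d)$ for moderate $d$, exploiting the polynomial decay of $p_\gamma$ to absorb the accumulation, and by falling back on the direct Markov-type estimates $m_t(d)\leq 2t/d$ and $q_t(d)\leq \Ed[V_t(f)]d^{-(2-\gamma)}$ once $d$ is large enough that both sides are individually dominated by $\mathrm{err}_t(\alpha,f)$. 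A secondary subtlety is that simultaneous control of the slowly varying~$\ell$ at the two scales $t^\alpha$ and $t$ requires the uniform convergence theorem and Potter's bounds; this is precisely why $\mathcal H_{\ell,\gamma}$ enters the final estimate under a supremum over $s\geq t^\alpha$ rather than at a single time.
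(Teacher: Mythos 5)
Your overall skeleton is close in spirit to the paper's: both arguments rest on the same one-step recursion for $\Ed N_t(d)$, on the identity $(d+1-\gamma)p_\gamma(d)=(1-\gamma)\mathbb{1}\{d=1\}+(d-1)p_\gamma(d-1)$, and on Karamata-type control of $F(t)=\Ed V_t(f)$, with a split at $t^\alpha$ producing the ingredients of $\mathrm{err}_t(\alpha,f)$. The genuine difference is organizational: you solve the recursion in $t$ for each fixed $d$ and induct on $d$, whereas the paper packages the recursion as a single positive linear operator $T_t$ acting on the whole sequence $(a_d)_{d\geq 1}$ and observes that its coefficients are nonnegative and sum to at most $1-\tfrac{2-f(t)}{2(t-1)}+\tfrac{1-f(t)}{2(t-1)^2}<1$, so that $T_t$ is a contraction on $L_\infty(\N)$. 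The supremum over $d$ of the discrepancy is then automatically non-expansive in $t$ and simply accumulates the per-step source terms; no induction on $d$, and hence no uniformity-in-$d$ issue, ever arises.

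That uniformity issue is exactly where your proposal has a gap, and neither of the two devices you offer to close it works. First, dividing by $p_\gamma(d)$ replaces the coefficient $\tfrac{(d-1)(2-f)}{2t}$ of the level-$(d-1)$ term by $\tfrac{(d+1-\gamma)(2-f)}{2t}$, so the coefficients of the rescaled recursion sum to $1+\tfrac{(1-\gamma)(2-f(t))}{2t}>1$: the recursion becomes \emph{expansive} in $t$, and taking the supremum over $d$ accumulates a factor $\prod_s\bigl(1+\tfrac{1-\gamma}{s}\bigr)\asymp t^{1-\gamma}$ that destroys the bound rather than saving it (moreover the sources, divided by $p_\gamma(d)$, acquire a factor $d^{2-\gamma}$). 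Second, the Markov fallback $m_t(d)\leq 2t/d$ is dominated by $\mathrm{err}_t$ only for $d\gtrsim t/\mathrm{err}_t$, whereas a per-level accumulation of order $\mathrm{err}_t$ already fails for $d\gtrsim \mathrm{err}_t$; since $\mathrm{err}_t=o(F(t))=o(t)$, the uncovered intermediate range is nonempty and can be a full power of $t$ wide. The induction-on-$d$ route is in fact salvageable, but by an observation you do not make: the fresh source at level $d$, namely $\sum_s\Pi_{s+1,t}(d)|G_s(d)|$ with $\Pi_{s+1,t}(d)\asymp (s/t)^d$, is of order $\tfrac{1}{d}\,t\,g(t)\asymp\tfrac{1-\gamma}{d}\sum_s g(s)$ rather than $\sum_s g(s)$, and this intrinsic $1/d$ exactly compensates the accumulation $\sum_{k\leq d}\tfrac{k}{d}\cdot\tfrac1k$ across levels. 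A related loose end: your loop error $E_t(d)$ involves $\tfrac{d^2}{4t^2}m_t(d-j)$ with $m_t$ the \emph{unknown} quantity; to control it uniformly for $d\leq t$ you must either keep the quadratic terms inside the homogeneous part of the recursion (as the paper's $T_t$ does) or decompose $m_t=F\,p_\gamma+\Delta_t$ and use $d^2p_\gamma(d)=O(d^{\gamma})$ — your sketch does neither.
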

We stress the fact that the LHS of~\eqref{eq:supE} is~$o(\ell(t)t^{1-\gamma})=o(\Ed V_t(f))$, a fact implied later by Lemma~\ref{lemma:rvint}. This allows one to employ Theorem~\ref{thm:supE} to extract results about the behavior of the expected number of vertices having degree $d$ even when~$d$ is a function of~$t$ such that~$d=d(t)\xrightarrow{t\to\infty}\infty$, though the rate of growth of~$d(t)$ cannot be taken arbitrarily, but dependent on~$\mathcal{H}_{\ell,\gamma}$ and~$\gamma$. In fact, $d(t)=d_{\ell,\gamma}(t)$ should be chosen in such a way that
\begin{align}
\nonumber
	\lefteqn{\ell(t)^{-1}t^{-(1-\gamma)}\left(1+\log t+\ell(t^{\alpha})t^{\alpha(1-\gamma)}+\frac{\ell(t)}{\ell(t^{\alpha})}t^{(1-\alpha)(1-\gamma)}+\sup_{s\geq t^\alpha}\mathcal{H}_{\ell,\gamma}(s)t^{1-\gamma}\ell(t)\right)}\phantom{****************************************}
	\\ \label{eq:maxdtreq}
	&=o(d_{\ell,\gamma}(t)^{-(2-\gamma)}).
	\end{align}
Given~$c\in\R$, $\delta\in(0,1)$, we consider the functions that to each~$t>1$ associate respectively
	\begin{equation}
	\label{eq:svexamples}
	c,\quad\quad (\log t)^c, \quad\quad \log\log t,\quad\quad \exp( (\log t)^\delta).
	\end{equation}
	For the specific slowly varying functions in~\eqref{eq:svexamples}, one can see by Remark~\ref{obs:rateH}, \eqref{eq:Hexamples} and elementary asymptotic analysis,
	\begin{align*}
	d_{c,\gamma}(t) &\quad\text{ can be chosen in }\quad o\left(t^{\frac{1-\gamma}{2(2-\gamma)}}\right);
	\\
	d_{\log^c,\gamma}(t) &\quad\text{ can be chosen in }\quad o\left((\log t)^{\frac{1}{2-\gamma}}\right);
	\\
	d_{\log\log,\gamma}(t) &\quad\text{ can be chosen in }\quad o\left((\log t\cdot \log\log t)^{\frac{1}{2-\gamma}}\right);
	\\
	d_{\exp(\log^\delta),\gamma}(t) &\quad\text{ can be chosen in }\quad o\left((\log t)^{\frac{1-\delta}{2-\gamma}}\right).
	\end{align*}

The previous theorem assures us that $\Ed [V_t(f)\hat{P}_t(d,f)]/\Ed V_t(f)$ is close to $p_{\g}(d)$. The next one assures that $\hat{P}_t(d,f)$ is concentrated around $\Ed [V_t(f)\hat{P}_t(d,f)]/\Ed V_t(f)$.
\begin{theorem}\label{thm:powerlaw} Let $f \in \mathrm{RES}(-\g)$ with $\g \in [0,1)$. Then, for all $d \in \N$ and $A>0$ such that 
	\begin{equation}\label{def:conditionA}
	A<\frac{1}{4d\log(t)}\sqrt{\frac{\Ed V_t(f)}{1-\gamma} },
	\end{equation} 
	we have
	\begin{equation}
	\left| \hat{P}_t(d,f) - \frac{\Ed \left[V_t(f)\hat{P}_t(d,f)\right]}{\Ed V_t(f)}\right| \le A\cdot\frac{10d}{\sqrt{(1-\gamma)\Ed V_t(f)}}, 
	\end{equation}
	with probability at least $1-3e^{-A^2/3}$. 
\end{theorem}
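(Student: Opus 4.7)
The plan is to separately concentrate the numerator $N_t(d):=V_t(f)\hat P_t(d,f)$ (the number of vertices of degree~$d$) and the denominator $V_t(f)$ of $\hat P_t(d,f)$ around their expectations, and then recombine via the elementary identity
\[
\hat P_t(d,f)-\frac{\Ed N_t(d)}{\Ed V_t(f)} = \frac{N_t(d)-\Ed N_t(d)}{V_t(f)} + \frac{\Ed N_t(d)}{\Ed V_t(f)}\cdot\frac{\Ed V_t(f)-V_t(f)}{V_t(f)}.
\]
Using the trivial estimate $\Ed N_t(d)\le \Ed V_t(f)$, the proof reduces to bounding $|N_t(d)-\Ed N_t(d)|$, $|V_t(f)-\Ed V_t(f)|$, and the ratio $V_t(f)/\Ed V_t(f)$ from below, each event with tail probability at most $e^{-A^2/3}$; a union bound then yields the factor $3e^{-A^2/3}$ in the statement.

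Observing that $V_t(f)-1=\sum_{s=1}^{t-1}Z_s$ is a sum of independent Bernoulli variables whose variance is at most $\sum_s f(s)\le \Ed V_t(f)$, Bernstein's inequality immediately delivers both a two-sided deviation bound of order $A\sqrt{\Ed V_t(f)/(1-\g)}$ and the lower bound $V_t(f)\ge \tfrac{1}{2}\Ed V_t(f)$ under condition~\eqref{def:conditionA}. The $(1-\g)^{-1/2}$ prefactor comes from the Karamata asymptotics $\Ed V_t(f)\sim \ell(t)\,t^{1-\g}/(1-\g)$, an estimate already alluded to in the excerpt via Lemma~\ref{lemma:rvint}.

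For the numerator, I would introduce the Doob martingale $M_s:=\Ed[N_t(d)\mid \sF_s]$, with $\sF_s=\s(G_1,\ldots,G_s)$, so that $M_0=\Ed N_t(d)$ and $M_t=N_t(d)$. The desired concentration requires two ingredients to feed into a Freedman-type inequality: first, a uniform bound $|M_s-M_{s-1}|\le C d\log t$ on the increments, obtained by a coupling of two realizations that agree on $\sF_{s-1}$ and differ only in the outcome at step~$s$ (either the Bernoulli $Z_s$ or the PA choice), propagated forward through the PA dynamics so that the effect on the degree-$d$ count at time $t$ is of this order in expectation; second, a predictable quadratic variation bound $\langle M\rangle_t\le C d^2\,\Ed V_t(f)/(1-\g)$, which exploits that the step-$s$ ``active'' randomness is governed by $Z_s\sim \text{Ber}(f(s))$, contributing at most $C d^2 f(s)$ to the conditional variance; summing in $s$ gives $C d^2 \Ed V_t(f)$. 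Freedman's inequality then produces the Gaussian-type tail, with condition~\eqref{def:conditionA} (morally, $A\le \sqrt{\langle M\rangle_t}/R$ with $R$ the per-step increment bound) precisely ensuring that the Gaussian term dominates the linear Bernstein correction.

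The main obstacle is this sharp quadratic-variation bound: $d^2\,\Ed V_t(f)$ rather than the crude $d^2 t$ that a plain Azuma--Hoeffding would deliver is essential to reach the target rate $\sqrt{\Ed V_t(f)}$ instead of $\sqrt{t}$. This requires careful bookkeeping of how a single-step perturbation propagates through the preferential-attachment rule, and exploits that on edge-steps (overwhelmingly frequent when $f(s)$ is small) the per-step variance is proportional to $f(s)(1-f(s))$ rather than a constant, so the effective number of independent ``random bits'' driving $N_t(d)$ is $\sim\Ed V_t(f)$ rather than $t$. Once both martingale bounds are in hand, combining the three tail events through the decomposition above is a routine union-bound argument yielding the claimed inequality.
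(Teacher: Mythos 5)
Your architecture coincides with the paper's: a Doob martingale $M_s=\Ed[N_t(d,f)\mid\mathcal{F}_s]$ fed into Freedman's inequality for the numerator, a separate Chernoff/Bernstein bound for $V_t(f)$, and a triangle-inequality recombination with a union bound giving the $3e^{-A^2/3}$. The increment bound is also obtained in the paper by the coupling you describe (and is in fact the constant $4$, not $Cd\log t$; the $\log t$ in condition~\eqref{def:conditionA} enters elsewhere). However, there is a genuine gap in your justification of the quadratic-variation bound, which you correctly identify as the crux. Your claimed mechanism --- that the step-$s$ conditional variance is at most $Cd^2f(s)$ because ``the active randomness is governed by $Z_s$'' and edge-steps contribute variance $\propto f(s)(1-f(s))$ --- is not right: on an edge-step the two PA-chosen endpoints are random and do perturb $N_t(d)$, so edge-steps contribute variance regardless of $f(s)$. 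The correct bound (Lemma~\ref{lemma:boundquad}) is $\Ed[(\Delta M_s)^2\mid\mathcal{F}_s]\le 10d^2N_s(\le d,f)/s$: the increment is supported on the event that some vertex of current degree at most $d$ is hit, whose conditional probability is of order $dN_s(\le d,f)/s$. The factor $f(s)$ appears only indirectly, because $N_s(\le d,f)\le V_s\approx F(s)\approx sf(s)/(1-\gamma)$.

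This matters because the resulting quadratic variation is \emph{random}, not a deterministic $Cd^2\sum_sf(s)$, and Freedman's inequality only controls the deviation on the event $\{W_t\le\sigma^2\}$; you must separately bound the probability that $W_t$ exceeds your claimed $\sigma^2$. The paper handles this by introducing the stopping time $\tau=\inf\{s:V_s-F(s)\ge\lambda\}$, applying Freedman to the stopped martingale (on which $N_s(\le d,f)\le V_s\le F(s)+\lambda$, whence $\sigma^2_{d,t}=10d^2\sum_{s<t}(F(s)+\lambda)/s\le 10d^2((1-\gamma)^{-1}F(t)+\lambda\log t)$ by Karamata), and paying an extra Freedman bound for $\Pd(\tau\le t)$ --- this $\lambda\log t$ term is precisely where the $d\log t$ in condition~\eqref{def:conditionA} comes from. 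Your proposal as written skips this truncation step, so the Freedman application is not justified; once you add it, the argument closes and is essentially the paper's.
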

The power-law degree distribution of the random graph when $f\in\mathrm{RES}(-\gamma)$ is provided by Theorems~\ref{thm:supE} and~\ref{thm:powerlaw}, and is formally stated in the corollary below.
\begin{corollary}[Power-law degree distribution]Let $f \in \mathrm{RES}(-\g)$ with $\g \in [0,1)$. Then, for all $d \in \N$, $\alpha\in(0,1)$, and $A>0$ satisfying (\ref{def:conditionA}), 
	\begin{equation}
	\left| \hat{P}_t(d,f) - \frac{(1-\g)\Gamma(2-\g)\Gamma(d)}{\Gamma(d+2-\gamma)}\right| \le A\sqrt{\frac{40d^2}{\Ed V_t(f)}} + \mathrm{err}_t(\alpha,f), 
	\end{equation}
	with probability at least  $1-3e^{-A^2/3}$. 
\end{corollary}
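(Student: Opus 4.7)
The plan is to obtain the corollary as a clean bookkeeping consequence of Theorems~\ref{thm:supE} and~\ref{thm:powerlaw}, glued by the triangle inequality around the intermediate quantity $\Ed[V_t(f)\hat{P}_t(d,f)]/\Ed V_t(f)$. Concretely, I would write
\[
\bigl|\hat{P}_t(d,f) - p_\gamma(d)\bigr|
\;\le\;
\underbrace{\Bigl|\hat{P}_t(d,f) - \tfrac{\Ed[V_t(f)\hat{P}_t(d,f)]}{\Ed V_t(f)}\Bigr|}_{\text{fluctuation}}
\;+\;
\underbrace{\Bigl|\tfrac{\Ed[V_t(f)\hat{P}_t(d,f)]}{\Ed V_t(f)} - p_\gamma(d)\Bigr|}_{\text{bias}},
\]
and then estimate each piece separately, using the identification $p_\gamma(d) = (1-\gamma)\Gamma(2-\gamma)\Gamma(d)/\Gamma(d+2-\gamma)$.

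For the fluctuation piece, I would invoke Theorem~\ref{thm:powerlaw} verbatim: under the hypothesis~\eqref{def:conditionA} on $A$, it yields a bound of $10Ad/\sqrt{(1-\gamma)\Ed V_t(f)}$ with probability at least $1-3e^{-A^2/3}$. I would then square and absorb the $(1-\gamma)^{-1}$ factor along with the numerical constants into the single expression $A\sqrt{40 d^2/\Ed V_t(f)}$ that appears in the corollary, which is the only nontrivial bookkeeping step here.

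For the bias piece, I would divide both sides of~\eqref{eq:supE} by $\Ed V_t(f)$ to obtain an inequality of the form
\[
\Bigl|\tfrac{\Ed[V_t(f)\hat{P}_t(d,f)]}{\Ed V_t(f)} - p_\gamma(d)\Bigr| \;\le\; \tfrac{\tilde C\,\mathrm{err}_t(\alpha,f)}{\Ed V_t(f)},
\]
valid for any fixed $\alpha\in(0,1)$. Using the asymptotic $\Ed V_t(f)\asymp \ell(t)t^{1-\gamma}$ provided by Lemma~\ref{lemma:rvint} (which is the content of the remark following Theorem~\ref{thm:supE}), this right-hand side is $o(1)$ and is exactly the quantity the corollary calls $\mathrm{err}_t(\alpha,f)$ (up to the normalization by $\Ed V_t(f)$ that is implicit in the displayed form of the corollary).

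Putting the two bounds together on the intersection of the probability-$1$ event where Theorem~\ref{thm:supE} holds deterministically and the probability-$(1-3e^{-A^2/3})$ event where Theorem~\ref{thm:powerlaw} holds, and collecting constants, gives the stated inequality. There is no real obstacle: the only care needed is in tracking constants and in recognizing that the normalized version of $\mathrm{err}_t(\alpha,f)/\Ed V_t(f)$ is precisely the residual term written in the corollary's display, so that no further analysis of $\ell$ or of $\mathcal{H}_{\ell,\gamma}$ is required at this stage — those have already been done inside Theorem~\ref{thm:supE}.
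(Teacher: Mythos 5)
Your proposal is correct and coincides with the paper's own (implicit) argument: the corollary is presented as an immediate consequence of Theorems~\ref{thm:supE} and~\ref{thm:powerlaw}, glued by exactly this triangle inequality around $\Ed[V_t(f)\hat{P}_t(d,f)]/\Ed V_t(f)$, with the fluctuation term controlled by Theorem~\ref{thm:powerlaw} and the bias term by Theorem~\ref{thm:supE} divided by $\Ed V_t(f)$. The two caveats you flag --- that the corollary's $\mathrm{err}_t(\alpha,f)$ must be read as $\tilde C\,\mathrm{err}_t(\alpha,f)/\Ed V_t(f)$, and that the displayed constant $A\sqrt{40d^2/\Ed V_t(f)}$ does not literally dominate Theorem~\ref{thm:powerlaw}'s $10Ad/\sqrt{(1-\gamma)\Ed V_t(f)}$ --- are imprecisions in the paper's own statement rather than gaps in your argument.
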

We must stress out the fact that $(p_{\g}(d))_{d \in \N}$ is a generalized distribution for $\gamma \in (0,1)$. In this regime, we have mass escaping to infinity, due, possibly, to the existence of vertices of very high degree (c.f. Section~\ref{sec:fr} for a discussion about the maximum degree). On the other hand, what may be surprisingly is the fact that $G_t(f)$ has mean degree of order $t^{\gamma}$, \textit{w.h.p}, but still has positive proportion of vertices of constant degree.

We also point out that, another byproduct of our theorems is that for all $d \in \N$, we have that
\[
\lim_{t \to \infty} \hat{P}_t(d,f) = p_{\gamma}(d), \textit{ a.s}
\]
for any $f \in \mathrm{RES}(-\gamma)$,  with $\g \in [0,1)$.

For functions whose index $-\gamma$ lies on $(-\infty, -1]$, all the mass of the empirical degree distribution escapes to infinity in the sense that the fraction of vertices having degree $d$ goes to zero for any value of $d$. 
\begin{theorem}[All mass escapes to infinity]\label{t:notpowerlaw}Let $f \in \mathrm{RES}(-\g)$ with $\gamma \ge 1$. Then, for all fixed~$ d \in \N$,
	\begin{equation}
	\lim_{t \to \infty} \hat{P}_t(d,f) = 0, \textit{ a.s.}
	\end{equation}
\end{theorem}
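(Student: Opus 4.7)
The plan is to split into cases based on whether $\sum_{t \geq 1} f(t)$ converges. Slow variation of $\ell$ forces summability whenever $\gamma > 1$, while both behaviours are possible in the critical case $\gamma = 1$ depending on $\ell$. In the summable case, Borel--Cantelli applied to the independent Bernoullis $Z_t \sim \mathrm{Ber}(f(t))$ yields that almost surely only finitely many vertex-steps occur, so $V_\infty := \lim_{t \to \infty} V_t(f)$ is a.s.\ finite and there exists a random time $\tau$ after which every step is an edge-step on a fixed vertex set of size $V_\infty$. For each $v \in G_\tau$, the conditional probability that $v$ gains an edge at step $t+1$ is at least $D_t(v)/(t + O(1)) \geq 1/(t + O(1))$; since $\sum_{t > \tau} 1/t = \infty$, L\'evy's conditional Borel--Cantelli lemma gives $D_t(v) \to \infty$ a.s. Hence for every fixed $d \in \N$, the quantity $N_t(d) := \#\{v \in G_t : D_t(v) = d\}$ equals $0$ for all sufficiently large $t$, and $\hat{P}_t(d, f) \to 0$ a.s.

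In the non-summable case, necessarily $\gamma = 1$ with $\int_1^\infty \ell(s)/s\, ds = \infty$, and $V_t \to \infty$ a.s. The one-step expected evolution gives the recursion
\[
\mathbb{E}[N_{t+1}(d) \mid G_t] = N_t(d)\Bigl(1 - \tfrac{(2-f(t+1))d}{2t}\Bigr) + N_t(d-1)\tfrac{(2-f(t+1))(d-1)}{2t} + f(t+1)\,\mathbb{1}\{d = 1\},
\]
which is solved inductively on $d$ using the asymptotics $\prod_{r=s}^{t-1}(1 - (2-f(r+1))/(2r)) \asymp s/t$ (valid because $\sum_r f(r)/r$ converges for $\gamma = 1$) together with Karamata's theorem, yielding $\mathbb{E}[N_t(d)] = O(\ell(t))$ with implicit constant depending on $d$. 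On the other hand $\mathbb{E}[V_t] \sim \int_1^t \ell(s)/s\, ds$ is itself slowly varying and strictly dominates $\ell(t)$ by Karamata, so $\mathbb{E}[N_t(d)] / \mathbb{E}[V_t] \to 0$.

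To upgrade to almost-sure convergence, one decomposes, for any $\varepsilon > 0$,
\[
\Pr{\hat{P}_t(d, f) > \varepsilon} \leq \Pr{V_t \leq \tfrac{1}{2}\mathbb{E}[V_t]} + \Pr{N_t(d) > \tfrac{\varepsilon}{2}\mathbb{E}[V_t]};
\]
the first term is super-polynomially small by Chernoff on the independent Bernoullis $V_t = 1 + \sum_{s < t} Z_s$, and the second is handled via a Bernstein-type martingale inequality applied to the Doob decomposition of $N_t(d)$, exploiting that the conditional one-step variance is of order $(N_t(d) + N_t(d-1))/t$ rather than of unit order. A Borel--Cantelli argument along a sufficiently sparse subsequence $(t_k)$, combined with the monotonicity of $V_t$ and the almost-sure bound $|N_{t+1}(d) - N_t(d)| \leq 3$ used for interpolation between consecutive subsequence points, concludes that $\hat{P}_t(d, f) \to 0$ a.s. The \textbf{main obstacle} is precisely this concentration-and-interpolation step in the non-summable sub-case: when $\ell(s)/s$ is barely non-integrable, $\mathbb{E}[V_t]$ may grow as slowly as $\log\log t$, so naive Azuma-type bounds of order $\sqrt{t \log t}$ are useless and one must extract sharper variance estimates from the preferential attachment selection probabilities, then tune $(t_k)$ to balance summability of the Bernstein tails against the slow accumulation of $V_t$ that governs the interpolation error.
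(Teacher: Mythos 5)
Your case split (summable vs.\ non-summable $\sum_t f(t)$), the Borel--Cantelli argument in the summable case, and the expected-value analysis $\Ed N_t(d)=O_d(\ell(t))=o(F(t))$ by induction on $d$ all match the paper's route (its Proposition~\ref{prop:l1conv} does exactly this induction, reducing to $\gamma=1$). The gap is in the almost-sure upgrade for the critical case $\gamma=1$, $F(t)\to\infty$, and it is precisely at the step you flag as the ``main obstacle'': neither half of your proposed resolution can be made to work. First, the Bernstein/Freedman route on the Doob decomposition of $N_t(d)$ accumulates conditional variance of order $\sum_{s\le t} d^2 N_s(\le d)/s \approx \sum_{s\le t} F(s)/s$, and for $\gamma=1$ this is \emph{not} $O(F(t))$ --- e.g.\ when $F(t)\asymp\log\log t$ it is of order $\log t\cdot\log\log t$, so the resulting deviation $\sqrt{\log t\,\log\log t}$ dwarfs the target scale $\varepsilon F(t)$. (For $\gamma<1$ Karamata gives $\sum_s F(s)/s\asymp F(t)$, which is why the paper's Section~\ref{sec:conc} martingale argument works there but is not used for $\gamma\ge1$.) Second, the interpolation by $|N_{t+1}(d)-N_t(d)|\le 3$ between subsequence points cannot be tuned: controlling the interpolation error forces $t_k-t_{k-1}=O(\varepsilon F(t_k))$, hence at least $T/F(T)$ subsequence points up to time $T$, and even an exponential tail $\exp(-c\varepsilon^2F(t))=(\log t)^{-c\varepsilon^2}$ (in the $\log\log$ example) is nowhere near summable over that many points.

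The paper resolves both difficulties differently. For concentration it abandons martingales in favor of a direct second-moment bound: writing $N_t(\le d,f)=\sum_s \mathbb{1}\{E_{t,d}(v_s)\}$ with $E_{t,d}(v_s)=\{D_t(v_s)\le d, Z_s=1\}$, it proves a pairwise decorrelation estimate $\Pd(E_{t,d}(v_s),E_{t,d}(v_r))\le(1+dO((\ell(s)+d)/s))\Pd(E_{t,d}(v_s))\Pd(E_{t,d}(v_r))$ (Lemmas~\ref{lemma:decor1}--\ref{lemma:decor2}, plus Lemma~\ref{lemma:degoldv} for early vertices), yielding $\mathrm{Var}(N_t(\le d,f))\le(1+o(1))\Ed N_t(\le d,f)=O(F(t))$, so Chebyshev gives fluctuations $O(\sqrt{F(t)})\ll F(t)$. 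For interpolation it exploits monotonicity: $N_t(>d,f)=V_t(f)-N_t(\le d,f)$ is non-decreasing in $t$ (degrees only increase), so convergence of $N_{t_k}(>d,f)/F(t_k)\to1$ along the sparse subsequence $F(t_k)\ge(1+\varepsilon)^k$ transfers to the full sequence without any increment bound. Since $N_t(d,f)\le N_t(\le d,f)$, this gives the theorem. To repair your argument you would need to replace the martingale step by such a covariance computation (or an equivalent) and to interpolate through a monotone observable rather than through per-step increment bounds.
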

\subsection{Notation and conventions}
\subsubsection{General}Regarding constants, we let $C,C_1,C_2,\dots$ and $c,c_1,c_2,\dots$ be positive real numbers that do not depend on~$t$ whose values may vary in different parts of the paper. The dependence on other parameters will be highlighted throughout the text.

Since our model is inductive, we use the notation $\mathcal{F}_t$ to denote the $\s$-algebra generated by all the random choices made up to time $t$. We then have the natural filtration $\mathcal{F}_1 \subset \mathcal{F}_2 \subset \dots $ associated to the process.
\subsubsection{Graph theory} We abuse the notation and let~$V_t(f)$ denote the set and the number of vertices in~$G_t(f)$. Given a vertex~$v\in V_t(f)$, we will denote by~$D_t(v)$ its degree in~$G_t(f)$. We will also denote by~$\Delta D_t(v)$ the \emph{increment} of the discrete function~$D_t(v)$ between times~$t$ and~$t+1$, that is,
\begin{equation}
\Delta D_t(v) =D_{t+1}(v) -D_t(v).
\end{equation}
For every $d \in \N$ and edge-step function $f$, we let $N_t(d,f)$ be the number of vertices of degree ~$d$. Naturally, $N_t(\le d,f)$ stands for the number of vertices having degree at most $d$. Our empirical degree distribution is written as
\begin{equation}
\hat{P}_t(d,f) = \frac{N_t(d,f)}{V_t(f)}.
\end{equation}
Since the expected number of vertices appear repeatedly throughout the paper we reserve a special notation for it
\begin{equation}
F(t) := \Ed V_t(f).
\end{equation} 
We also drop the dependency on $f$ on all the above notations when the function $f$ is clear from the context or when we are talking about these observables in a very general way, including in other preferential attachment models.
\subsubsection{Asymptotic} We will make use of asymptotic notation $o$ and $O$, which will presuppose asymptotic in the time parameter $t$, except when another parameter is explicitly indicated. We also use the notation $O_{d}$ indicating that the implied constant depends only on the quantity $d$. For instance, $O_d(t)$ denotes a quantity bounded by~$t$ times a constant depending only on $d$. Moreover, for any two sequence of real numbers $(a_n)_{n \in \N}$ and $(b_n)_{n \in \N}$, we write~$a_n \approx b_n$, if~$a_n/b_n$ converges to a non-zero constant. We write $a_n \sim b_n$ for the particular case $c=1$.
\subsection{Organization} Section~\ref{s:expdegdist} is devoted to the analysis of expected number of vertices having degree $d$ for $f \in \mathrm{RES}(-\gamma)$ with $\gamma \in [0,1)$ proving Theorem~\ref{thm:supE}. In Section~\ref{sec:conc} we prove a general concentration result for $\hat{P}_t(d,f)$ which holds for any edge-step function $f$. Then, we use this general result exploiting our knowledge about $f$ when it belongs to the class $\mathrm{RES}(-\g)$, for $\g \in [0,1)$, to prove Theorem~\ref{thm:powerlaw}. The case $\g \ge 1$ is treated separately in Section~\ref{sec:gamma1}, where we prove Theorem~\ref{t:notpowerlaw} and all the results needed. We end this paper presenting in Section~\ref{sec:fr} some brief discussion about the \textit{affine case of this model} and the \textit{maximum degree}. In the first topic we show that the presence of edge-step functions inhibits the effect of constant terms added to the rule (\ref{def:PArule}). In the second topic we provide some computations that indicate that the order of the maximum degree also varies according to how fast the edge-step goes to zero. For $f$ whose index of regular variation $-\g$ lies on $(-1,0)$ a maximum degree of order $t$ seems to be achieved, whereas the case where $f$ is slowly varying seems to be richer in the sense that the order of the maximum degree at time $t$ may depend on $f$.

\section{Expected value analysis}\label{s:expdegdist}
In this section, we prove Theorem~\ref{thm:supE}, which gives us estimates on the expected number of vertices having degree exactly $d$ for $f \in \mathrm{RES}(-\gamma)$, with $\g \in [0,1)$. Our first result in this direction is the following recurrence relation for $\Ed N_t(d,f)$ which holds for any edge-step function~$f$.
\begin{lemma}\label{lemma:recurntd} Let $\Ed N_t(d)$ denote $\Ed N_t(d,f)$ for a fixed edge-step function $f$. Then, $\Ed N_t(d)$ satisfies
\begin{equation}\label{eq:end1}
\Ed N_{t+1}(1) = \left(1 - \frac{2-f(t+1)}{2t} + \frac{(1-f(t+1))}{4t^2}\right)\Ed N_{t}(1) + f(t+1) ,
\end{equation} 
and for a fixed integer~$d\ge 2$,
\begin{align} 
\nonumber
\Ed N_{t+1}(d) &= \left(1 - \frac{(2-f(t+1))d}{2t} + \frac{(1-f(t+1))d^2}{4t^2}\right)\Ed N_{t}(d) 
\\ \label{eq:end}
&\quad+ \left(\frac{(2-f(t+1))(d-1)}{2t} - \frac{(1-f(t+1))(d-1)^2}{4t^2}\right)\Ed N_{t}(d-1) 
\\  \nonumber
&\quad+ \frac{(1-f(t+1))(d-2)^2}{4t^2}\Ed N_{t}(d-2).
\end{align}
\end{lemma}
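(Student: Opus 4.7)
The plan is to derive both recurrences by conditioning on $\mathcal{F}_t$ and computing the one-step degree-transition probabilities of each individual vertex. A preliminary observation is that at each time step exactly one edge is added to $G_t(f)$ (regardless of whether $Z_{t+1}=1$ or $Z_{t+1}=0$), so starting from $\sum_{v\in V_1(f)} D_1(v)=2$ one has by induction $\sum_{v\in V_t(f)} D_t(v)=2t$ for all $t\ge 1$. Consequently the PA weight of a vertex $v$ at time $t$ is exactly $D_t(v)/(2t)$, which makes all the selection probabilities explicit.

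Fix a vertex $v\in V_t(f)$ with $D_t(v)=d$ and let $\Delta D_t(v)\in\{0,1,2\}$ be its degree increment (the value $2$ can only come from a loop created in an edge-step). Writing $x:=d/(2t)$, I would condition on $\{Z_{t+1}=1\}$ (vertex-step, probability $f(t+1)$) versus $\{Z_{t+1}=0\}$ (edge-step, probability $1-f(t+1)$), and use the independence of the two endpoint-draws in the edge-step, to obtain the closed forms
\[
\mathbb{P}\bigl(\Delta D_t(v)=0 \mid \mathcal{F}_t\bigr) = f(t+1)(1-x) + (1-f(t+1))(1-x)^2,
\]
\[
\mathbb{P}\bigl(\Delta D_t(v)=1 \mid \mathcal{F}_t\bigr) = f(t+1)\,x + 2(1-f(t+1))\,x(1-x),
\]
\[
\mathbb{P}\bigl(\Delta D_t(v)=2 \mid \mathcal{F}_t\bigr) = (1-f(t+1))\,x^{2}.
\]
Expanding these quadratics in $x=d/(2t)$ produces precisely the coefficients appearing in the statement.

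With these one-vertex transition probabilities in hand, I would write
\[
N_{t+1}(d) \;=\; \sum_{v\in V_t(f)}\mathbb{1}\{D_{t+1}(v)=d\} \;+\; \mathbb{1}\{d=1\}\, Z_{t+1},
\]
since the new vertex (created exactly when $Z_{t+1}=1$) is born with degree $1$. For $d\ge 2$ the second term vanishes, and the sum decomposes by the starting degree of $v$: vertex $v$ ends at degree $d$ iff $D_t(v)\in\{d-2,d-1,d\}$ with the matching increment. Grouping indicators by starting degree, taking conditional expectation using the transition probabilities above, and then applying the tower property yields a linear combination of $\mathbb{E}N_t(d)$, $\mathbb{E}N_t(d-1)$, $\mathbb{E}N_t(d-2)$ with the stated coefficients. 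For $d=1$ the contribution from the possibly new vertex is $\mathbb{E}Z_{t+1}=f(t+1)$, which appears as the inhomogeneous term.

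The argument is essentially careful bookkeeping rather than anything deep; the likeliest sources of error are the factor $2$ in the $\Delta=1$ probability (coming from the two ordered positions of $v$ among $(u_1,u_2)$) and correctly matching each transition $\Delta=k$ with its summand $\mathbb{E}N_t(d-k)$, so I would double-check by specializing to $d=2$ and to the known homogeneous case $f\equiv p$ studied in \cite{CLBook,CF03}.
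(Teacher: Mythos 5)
Your proposal is correct and follows essentially the same route as the paper: compute the one-vertex transition probabilities $\Pd(\Delta D_t(v)=k\mid\mathcal{F}_t)$ for $k\in\{0,1,2\}$ using the fact that the total degree at time $t$ is $2t$, decompose $N_{t+1}(d)$ over the possible starting degrees $d,d-1,d-2$ plus the newborn-vertex indicator for $d=1$, and take expectations. One remark: your (correct) expansion of $\Pd(\Delta D_t(v)=1\mid\mathcal{F}_t)$ carries a factor $2$ on the quadratic term, i.e.\ $-2(1-f(t+1))(d-1)^2/(4t^2)$, which agrees with the paper's own intermediate computation and with the operator $T_t$ used later in the proof of Theorem~\ref{thm:supE}, but not with the displayed coefficient of $\Ed N_t(d-1)$ in the lemma statement, which appears to have dropped that factor.
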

\begin{proof}
	There are two possible ways in which a vertex~$v$ increases its degree by~$1$ at time~$t+1$: either a vertex is created at time~$t+1$ and connects to~$v$, or an edge is created instead and exactly one of its endpoints connects to~$v$. This implies
	\begin{equation}\label{eq:vardeg1}
	\begin{split}
	\mathbb{P}\left(\Delta D_t(v) = 1 \middle | \mathcal{F}_t \right) &=  f(t+1)\frac{D_t(v)}{2t} +2(1-f(t+1))\frac{D_t(v)}{2t}\left( 1 - \frac{D_t(v)}{2t}\right)\\
	& = \left(1-\frac{f(t+1)}{2}\right)\frac{D_t(v)}{t} - 2\left(1-f(t+1)\right)\frac{D^2_t(v)}{4t^2}.
	\end{split}
	\end{equation}
	In order for the degree of~$v$ to increase by~$2$ at time~$t+1$ the only possibility is that an edge step occurs and both endpoints of the new edge are attached to~$v$, creating a loop. This implies
	\begin{equation}\label{eq:vardeg2}
	\mathbb{P}\left(\Delta D_t(v) = 2 \middle | \mathcal{F}_t \right) =  \left(1-f(t+1)\right)\frac{D^2_t(v)}{4t^2}. 
	\end{equation}
	We may write $N_{t+1}(d)$ as
	\begin{align}
	\nonumber
	\lefteqn{N_{t+1} ( d)} \phantom{***}\\ 
	\label{eq:recuntd}
	&= \sum_{\substack{v\in V_t(f) \\ D_t(v)=d}} \mathbb{1}{\{ \Delta D_t (v) = 0
		\}} + \sum_{\substack{v\in V_t(f) \\ D_t(v)=d-1}} \mathbb{1}{\{ \Delta D_t (v) = 1 \}} + \sum_{\substack{v\in V_t(f) \\ D_t(v)=d-2}} \mathbb{1}{\{ \Delta D_t (v) = 2 \}}.
	\end{align}
	Combining the three above equations and taking the expected value on (\ref{eq:recuntd}), we obtain~(\ref{eq:end}). For the case $d=1$, just observe that
	\[
	N_{t+1} (1) = \sum_{\substack{v\in V_t(f) \\ D_t(v)=1}} \mathbb{1}{\{ \Delta D_t (v) = 0
		\}} +  \mathbb{1}{\{\text{a vertex born at time } t+1  \}}.
	\]
\end{proof}
From now on we restrict our edge-step functions to the class $\mathrm{RES}(-\g)$ with $\g$ always in the range $[0,1)$. Note that in this case, by the Representation Theorem (Theorem~\ref{thm:repthm}), there exists a \textit{slowly varying} function $\ell$ such that 
\begin{equation}\label{eq:lf}
f(t) = t^{-\g}\ell(t),
\end{equation} 
for all $t$. Before proving Theorem~\ref{thm:supE}, we introduce notation and state a crucial lemma about regularly varying functions and their sums.
\begin{lemma}[Proof in Appendix~\ref{app:rvf}]
\label{lemma:rvint}
Let~$\gamma\in[0,1)$ and let~$\ell:\R\to\R$ be a continuous slowly varying function such that~$s\mapsto\ell(s)s^{-\gamma}$ is non-increasing. Define
\begin{equation}
\label{eq:rvinterrordef}
\mathcal{H}_{\ell,\gamma}(t):=\int_{0}^{1} \left| \frac{\ell(ut)}{\ell(t)}-1 \right|u^{-\gamma}\mathrm{d} u.
\end{equation}
Then~$\mathcal{H}_{\ell,\gamma}(t)$ is well defined and the following holds
\begin{itemize}
	\item[(i)] $\displaystyle \mathcal{H}_{\ell,\gamma}(t)\xrightarrow{t\to\infty}0 ;$ \\
	\item [(ii)] $\displaystyle  \mathcal{G}_{\ell,\gamma}(t):=\left| \sum_{k=1}^{t}\ell(k)k^{-\gamma}  -\frac{t^{1-\gamma}\ell(t)}{1-\gamma}     \right| \left(  t^{1-\gamma}\ell(t)  \right)^{-1} \leq \mathcal{H}_{\ell,\gamma}(t)+ \left(  t^{1-\gamma}\ell(t)   \right)^{-1} . $ 
\end{itemize}
\end{lemma}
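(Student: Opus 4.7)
The plan is to prove the two items separately, with~(i) resting on dominated convergence and~(ii) on a two-step comparison (sum to integral, then integral to target).

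For~(i), the monotonicity hypothesis on $s\mapsto\ell(s)s^{-\gamma}$ immediately yields, for $u\in(0,1]$, the lower bound $\ell(ut)/\ell(t)\geq u^\gamma$; hence on the set where $\ell(ut)\leq\ell(t)$ the integrand is dominated by $u^{-\gamma}-1$, which is integrable on $(0,1]$ since $\gamma<1$. On the complementary set $\{\ell(ut)>\ell(t)\}$ I would invoke the standard Potter bound for slowly varying functions: for any fixed $\delta\in(0,1-\gamma)$ there exists $t_0$ such that $\ell(ut)/\ell(t)\leq 2\,u^{-\delta}$ whenever $ut\geq t_0$ and $u\leq 1$, producing the dominator $3\,u^{-\gamma-\delta}$, still integrable on $(0,1]$. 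Combined with the pointwise convergence $\ell(ut)/\ell(t)\to 1$ coming from slow variation, the dominated convergence theorem then gives $\mathcal{H}_{\ell,\gamma}(t)\to 0$. The sliver $u\in(0,t_0/t)$, where Potter's estimate does not directly apply, is disposed of using boundedness of $\ell$ on the compact set $[1,t_0]$ together with the fact that the length of this interval tends to~$0$.

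For~(ii), the monotonicity of $g(s):=\ell(s)s^{-\gamma}$ first gives
\[
\int_{1}^{t+1} g(s)\,\mathrm{d}s \;\leq\; \sum_{k=1}^{t} g(k) \;\leq\; g(1)+\int_{1}^{t} g(s)\,\mathrm{d}s,
\]
so the sum and $\int_{1}^{t} g$ differ by at most $g(1)=\ell(1)$, which, once divided by $t^{1-\gamma}\ell(t)$, fits inside the $(t^{1-\gamma}\ell(t))^{-1}$ error term. For the integral-versus-target comparison, the change of variable $s=ut$ produces
\[
\int_{1}^{t}\ell(s)s^{-\gamma}\,\mathrm{d}s \;=\; t^{1-\gamma}\int_{1/t}^{1}\ell(ut)u^{-\gamma}\,\mathrm{d}u,
\]
while $t^{1-\gamma}\ell(t)/(1-\gamma)=t^{1-\gamma}\ell(t)\int_{0}^{1}u^{-\gamma}\,\mathrm{d}u$. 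Subtracting, taking absolute values, and dividing by $t^{1-\gamma}\ell(t)$ bounds the remainder by
\[
\int_{1/t}^{1}\left|\frac{\ell(ut)}{\ell(t)}-1\right|u^{-\gamma}\,\mathrm{d}u \;+\; \frac{t^{\gamma-1}}{1-\gamma} \;\leq\; \mathcal{H}_{\ell,\gamma}(t)+\frac{1}{(1-\gamma)\,t^{1-\gamma}},
\]
and assembling the two contributions yields the claimed bound (up to absorbing constants into the $(t^{1-\gamma}\ell(t))^{-1}$ factor).

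The main obstacle is dominating $|\ell(ut)/\ell(t)-1|u^{-\gamma}$ uniformly near $u=0$: monotonicity only supplies the lower bound $\ell(ut)/\ell(t)\geq u^\gamma$, so the matching upper bound must be extracted from Potter's inequality, which itself is valid only once $ut$ is large enough. The cleanest way to dispose of the residual small-$u$ region is via continuity of $\ell$ and compactness. Apart from this point, the proof reduces to a routine combination of change of variables, Karamata's uniform convergence theorem on compact subsets of $(0,\infty)$, and elementary asymptotic bookkeeping.
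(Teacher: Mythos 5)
Your proof is correct and follows essentially the same route as the paper's: for (i), a split of the integral at $u=t_0/t$ (the paper's $M/t$), Potter's inequality to produce a $t$-independent integrable dominating function, and dominated convergence, with the residual sliver near $u=0$ controlled by boundedness of $\ell$ on compacts together with $t^{1-\gamma}\ell(t)\to\infty$; for (ii), a sum-to-integral comparison via monotonicity followed by the substitution $u=s/t$. The only (harmless) deviations are that you use the monotonicity hypothesis instead of the lower half of Potter's bound in (i), and that starting the integral at $s=1$ in (ii) leaves an extra $t^{\gamma-1}/(1-\gamma)$ which is not literally absorbable into $(t^{1-\gamma}\ell(t))^{-1}$ when $\ell$ is unbounded --- but this is the same constant-level imprecision already present in the paper's own proof, and it does not affect how the lemma is used.
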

\begin{obs}
\label{obs:rateH} Here we provide some examples of the kind of rate of decay that is associated to the above Lemma. Consider the functions defined in~\eqref{eq:svexamples}. Elementary calculations then show that their associated error terms are, respectively,
\begin{equation*}
\mathcal{H}_{c,\gamma}(t)=0,\quad\mathcal{H}_{(\log t)^c,\gamma}(t)=O((\log t)^{-1}),\quad
\end{equation*}
\begin{equation} 
\label{eq:Hexamples}\mathcal{H}_{\log\log t,\gamma}(t)=O((\log t\log\log t)^{-1}),\quad
\mathcal{H}_{\exp( \log^\delta t),\gamma}(t)=O((\log t)^{-(1-\delta)}).
\end{equation}
\end{obs}
Now we have all the tools needed for the proof of Theorem~\ref{thm:supE}. The proof is inspired by~\cite{HBook} Section~$8.6.2$, though our context prevents a straightforward application. The essential idea is that~$N_t(d)$ and~$p_\gamma(d)F(t)$ satisfy very similar recurrence relations in~$d$ when~$t$ is large. Quantifying this similarity allows us to prove that they are indeed close as sequences in~$d$ in the~$L_\infty(\N)$ sense. We expand on this idea below.  
\begin{proof}[Proof of Theorem \ref{thm:supE}]
For each~$t\geq 2$ we define the linear operator
\[T_{t}:L_\infty(\N)\to L_\infty(\N) \]
that maps each bounded sequence~$(a_j)_{j\geq 1}$ to a sequence defined by
\begin{align}
	\label{eq:Tdef}
	(T_{t}((a_j)_{j\geq 1}))_k&:=
	\left(1-\frac{2-f(t)}{2(t-1)}k+\frac{1-f(t)}{4(t-1)^2}k^2\right)a_{k}
	\\ \nonumber
	&\quad +
	\left(\frac{2-f(t)}{2(t-1)}(k-1)-\frac{2(1-f(t))}{4(t-1)^2}(k-1)^2\right)a_{k-1}\mathbb{1}\{k>1\}
	\\ \nonumber
	&\quad +
	\frac{1-f(t)}{4(t-1)^2}(k-2)^2 a_{k-2}\mathbb{1}\{k>2\}.
	\end{align}
	Since the coefficients of~$a_k$, $a_{k-1}$, and~$a_{k-2}$ above are all nonnegative, we get
	\begin{align}
	\|(T_{t}((a_j)_{j\geq 1}))\|_\infty \nonumber
	&\leq
	\sup_k\Bigg(
	\left(1-\frac{2-f(t)}{2(t-1)}k+\frac{1-f(t)}{4(t-1)^2}k^2\right)\|(a_j)_{j\geq 1}\|_\infty
	\\ \label{eq:Tcontraction}
	&\quad\quad\quad\quad +
	\left(\frac{2-f(t)}{2(t-1)}(k-1)-\frac{2(1-f(t))}{4(t-1)^2}(k-1)^2\right)\|(a_j)_{j\geq 1}\|_\infty
	\\ \nonumber
	&\quad\quad\quad\quad +
	\frac{1-f(t)}{4(t-1)^2}(k-2)^2 \|(a_j)_{j\geq 1}\|_\infty
	\Bigg)
	\\ \nonumber
	&\leq
	\left(
	1 -  \frac{2-f(t)}{2(t-1)}+\frac{1-f(t)}{2(t-1)^2}
	\right)  \|(a_j)_{j\geq 1}\|_\infty,
	\end{align}
	which implies~$T_t$ is a contraction on~$L_\infty(\N)$. Furthermore, by Lemma~\ref{lemma:recurntd}, we have
	\[
	\Ed[N_t(d)]=(T_t((\Ed[N_{t-1}(k)])_{k\geq 1}))_{d}+f(t)\cdot\mathbb{1}\{d=1\}.
	\]
	Our goal is to use~$T_t$ to bound the distance between the sequence of expectations above and the sequence $(F(t)\cdot p_{\gamma}(d))_{d\geq 1}$. We will do so by showing that~$(F(t)\cdot p_{\gamma}(d))_{d\geq 1}$ is very close to being a fixed point of another operator defined below in~\eqref{eq:Sdef}, this operator being itself very close to~$T_t$ for large~$t$. 
	
	By elementary properties of the Gamma function, we see that~$(p_{\gamma}(d))_{d\geq 1}$ is defined recursively by
	\begin{equation}
	\label{eq:mdef}
	p_{\gamma}(d)=\frac{d-1}{d+1-\gamma}p_{\gamma}(d-1);\quad\quad p_{\gamma}(1)=\frac{1-\gamma}{2-\gamma}.
	\end{equation}
	By Lemma~$\ref{lemma:rvint}$ we have
	\begin{align}
	\frac{F(t-1)}{F(t)}&=1-\frac{f(t)}{F(t)}
	\label{eq:FRES}
	= 1-\frac{t^{-\gamma}\ell(t)}{(1+O(\mathcal{G}_{\ell,\gamma}(t)))\frac{t^{1-\gamma}\ell(t)}{1-\gamma}}
	= 1-\frac{1-\gamma}{t}(1+O(\mathcal{G}_{\ell,\gamma}(t))).
	\end{align}
	Observe that the sequence $(\Ed N_t(d))_{d\ge 1}$ has all its coordinates, for $d >2t$, equal zero. Therefore, we must truncate the sequence~$(p_{\gamma}(d))_{d\geq 1}$ for~$d>2t$ obtaining the sequence $(m_{d,t})_{d\geq 1}$ defined by
	\[
	m_{d,t} :=p_{\gamma}(d)\mathbb{1}\{d \leq 2t\}.
	\]
	Now consider the operator~$S_t:\R^\N\to\R^\N$ defined by
	\begin{equation}
	\label{eq:Sdef}
	(S_t(a_j)_{j\geq 1})_d:=\left(\frac{d-1}{1-\gamma}a_{d-1}-\frac{d}{1-\gamma}a_{d}\right)\mathbb{1}\{d \leq t\},
	\end{equation}
	and note that, by an application of (\ref{eq:mdef}), the sequence $(m_{d,t})_{d\ge 1}$ satisfies
	\begin{equation}
	\label{eq:SMd}
	m_{d,t}=(S_t(m_{j,t})_{j\geq 1})_d+\mathbb{1}\{d=1\}.
	\end{equation}
	Defining then
	\begin{equation}
	\mathcal{E}_d(t):=((f(t)S_t+F(t-1)(I-T_{t}))(m_{j,t})_{j\geq 1})_d,
	\end{equation}
	where~$I$ denotes the identity operator in~$\R^\N$, we get
	\begin{align}
	\nonumber
	F(t)m_{d,t} 
	&=
	F(t-1)m_{d,t}+f(t)m_{d,t}
	\\ \nonumber
	&=
	F(t-1)m_{d,t}+f(t)(S_t(m_{j,t})_{j\geq 1})_d+f(t)\mathbb{1}\{d=1\}
	\\ \label{eq:Ftmais1Md}
	&=(T_{t}(	F(t-1)m_{j,t})_{j\geq 1})_d+f(t)\mathbb{1}\{d=1\}+\mathcal{E}_d(t).
	\end{align}
	We will now bound from above the terms in~$(\mathcal{E}_d(t))_{d\geq 1}$, which will be the main error terms associated to the approximation of~$\Ed[N_t(d)]$ by~$F(t) m_{d,t}$. Note that~$\|(m_{j,t})_{j\geq 1}\|_{\infty}\leq 1$ and~$\sup_d d^{2-\gamma}p_{\gamma}(d)<\infty$, which together with~\eqref{eq:Tdef} imply
	\begin{align}
	\label{eq:TmenosI}
	((T_{t}-I)((m_{j,t})_{j\geq 1}))_d&=
	-\frac{d}{t-1} m_{d,t}
	+
	\frac{d-1}{t-1}(d-1)m_{d-1,t}
	+O(f(t)t^{-1}+d^\gamma t^{-2}\mathbb{1}\{d\leq 2t\}).
	\end{align}
	Note that the function represented by the~$O$ notation above is actually~$o(t^{-1})$, since $f$ decreases to zero. We then obtain, by~$ (\ref{eq:lf},\ref{eq:mdef},\ref{eq:FRES}) $, for~$d\leq 2t$, 
	\begin{align}
	\nonumber
	\mathcal{E}_d(t)
	&=	F(t-1)\left(\frac{d}{t-1} m_{d,t}
	-
	\frac{d-1}{t-1}m_{d-1,t}\right)
	+
	f(t)\left(-\frac{d}{1-\gamma} m_{d,t}+
	\frac{d-1}{1-\gamma}m_{d-1,t}\right)+o(t^{-1})
	\\ \nonumber
	&=\frac{d}{1-\gamma}p_{\gamma}(d)\left(  \frac{(1+O(\mathcal{G}_{\ell,\gamma}(t-1)))\ell(t-1)(t-1)^{1-\gamma}}{(t-1)}  -\ell(t)t^{-\gamma}  \right)
	\\ \label{eq:boundedt}
	&\quad+
	\frac{(d-1)}{1-\gamma}\frac{d+1-\gamma}{d-1}p_{\gamma}(d)\left( \ell(t)t^{-\gamma}   -  \frac{(1+O(\mathcal{G}_{\ell,\gamma}(t-1)))\ell(t-1)(t-1)^{1-\gamma}}{(t-1)}\right)
	+o(t^{-1})
	\\ \nonumber
	&=
	t^{-\gamma}p_{\gamma}(d)\left(\ell(t)-      (1+O(\mathcal{G}_{\ell,\gamma}(t-1)))\ell(t-1)\left(1-\frac{\gamma}{t}+O(t^{-2})\right)       \right) +o(t^{-1})
	\\ \nonumber
	&= t^{-\gamma}p_{\gamma}(d)(\ell(t)-\ell(t-1))+t^{-\gamma}\ell(t-1)O(\mathcal{G}_{\ell,\gamma}(t-1))+o(t^{-1}).
	\end{align}
	Furthermore, $\mathcal{E}_d(t)=0$ for~$d>2t$. The above equation together with~\eqref{eq:Tcontraction} and~\eqref{eq:Ftmais1Md} implies
	\begin{align}
	\nonumber
	\|(\Ed[N_t(d)] -  F(t)m_{d,t})_{d\geq 1} \|_\infty
	&\leq 
	\|T_t((\Ed[N_{t-1}(d)]  - F(t-1)m_{d,t})_{d\geq 1})\|_\infty+ \|(\mathcal{E}_d(t))_{d \geq 1}\|_\infty
	\\ \nonumber
	&\leq \|(\Ed[N_{t-1}(d)]  - F(t-1)m_{d,t-1})_{d\geq 1}\|_\infty
	\\ \label{eq:Tmbound1}
	&\quad+ \|(\mathcal{E}_d(t))_{d \geq 1}\|_\infty+F(t-1)p_{\gamma}(t)
	\\ \nonumber
	&\leq C + \sum_{s=1}^{t}  \left(\|(\mathcal{E}_d(s))_{d \geq 1}\|_\infty+F(s-1)p_{\gamma}(s) \right),
	\end{align}
	since 
	\[
	\|(\Ed[N_1(d)] -  F(1)m_{d,1})_{d\geq 1} \|_\infty < C.
	\]
for some constant~$C>0$. Since~$p_{\gamma}(s)=O(s^{-2+\gamma})$, Lemma~\ref{lemma:rvint} implies
\[
\sum_{s=1}^{t}F(s-1)p_{\gamma}(s) \leq C \sum_{s=1}^{t} s^{-1} \leq C\log t,
\]
and the proof will be finished once we  show an upper bound for~$\sum_{s=1}^{t}\|(\mathcal{E}_d(s))_{d \geq 1}\|_\infty$ of the desired order. Since~$\ell(s)s^{-\gamma}$ is decreasing, we get
\begin{align}
\nonumber
\sum_{s=1}^{t} s^{-\gamma}|\ell(s)-\ell(s-1)|
&\leq
C+\int_{1}^{t} s^{-\gamma}|\ell(s)-\ell(t)|\mathrm{d} s+ \int_{1}^{t} s^{-\gamma}|\ell(s-1)-\ell(t)|\mathrm{d} s
\\ \label{eq:Tmbound2}
&\leq 
C+\ell(t)t^{1-\gamma}\mathcal{H}_{\ell,\gamma}(t)+\ell(t)t^{1-\gamma}\int_{0}^{t-1}\left|\frac{\ell(y)}{\ell(t)}-1\right|\frac{(y+1)^{-\gamma}}{t^{1-\gamma}}\mathrm{d} y
\\ \nonumber
&\leq C+2\ell(t)t^{1-\gamma}\mathcal{H}_{\ell,\gamma}(t).
\end{align}
By Lemma~\ref{lemma:rvint}, we have
\begin{align}
\nonumber
\sum_{s=1}^t s^{-\gamma}\ell(s-1)\mathcal{G}_{\ell,\gamma}(s-1)
&=
\sum_{s=1}^{t^\alpha} s^{-\gamma}\ell(s-1)\mathcal{G}_{\ell,\gamma}(s-1)
+\sum_{s=t^\alpha+1}^{t} s^{-\gamma}\ell(s-1)\mathcal{G}_{\ell,\gamma}(s-1)
\\ \label{eq:Tmbound3}
&\leq C\ell(t^{\alpha})t^{\alpha(1-\gamma)}+C\sup_{s\geq t^\alpha}\mathcal{G}_{\ell,\gamma}(s)t^{1-\gamma}\ell(t)
\\ \nonumber
&\leq C\left(\ell(t^{\alpha})t^{\alpha(1-\gamma)}+\sup_{s\geq t^\alpha}\mathcal{H}_{\ell,\gamma}(s)t^{1-\gamma}\ell(t)    +\frac{\ell(t)}{\ell(t^{\alpha})}t^{(1-\alpha)(1-\gamma)}   \right)
\end{align}
Together with~$(\ref{eq:Tmbound1},\ref{eq:Tmbound2})$ this implies
\begin{align*}
\lefteqn{\|(\Ed[N_t(d)] -  F(t)m_{d,t})_{d\geq 1} \|_\infty}\phantom{*******}
\\&\leq
C\left(1+\log t+\ell(t^{\alpha})t^{\alpha(1-\gamma)}+\frac{\ell(t)}{\ell(t^{\alpha})}t^{(1-\alpha)(1-\gamma)}+\sup_{s\geq t^\alpha}\mathcal{H}_{\ell,\gamma}(s)t^{1-\gamma}\ell(t)\right)
,
\end{align*}
finishing the proof of the result.
\end{proof}

\section{Concentration results for $\hat{P}_t(d,f)$}\label{sec:conc}

We begin this section with a brief discussion about why the presence of the edge-step function requires concentration results sharper than those found in the present literature.

For general concentration results for~$N_t(d)$ the usual approach is to obtain a (sub, super)martingale involving $N_t(d)$, then to prove that it has bounded increments and finally to apply Azuma's inequality (Theorem~\ref{t:azuma}). These (sub, super)martingales are usually $N_t(d)$ properly normalized or the Doob martingale, see \cite{CLBook, HBook} for the two distinct approaches. This sort of argument leads to concentration result for~$N_t(d)$ with a deviation from its mean typically of order~$\sqrt{t}$. More precisely, it is proven that
\begin{equation}\label{eq:approxPowerlaw}
N_t(d) \sim \Ed \left[N_t(d)\right] \pm A\sqrt{t},
\end{equation}
with high probability, and from the analysis of the expected value it comes that 
\begin{equation}\label{eq:approxExpec}
\Ed \left[N_t(d)\right] \sim \frac{t}{d^{\beta}},
\end{equation}
where $\beta$ is the power-law exponent. Since the edge-step function controls the growth rate of the vertex set, in the presence of a regular varying edge-step function the expected value of~$N_t(d,f)$ analysis leads to
\begin{equation}
\Ed \left[N_t(d,f)\right] \sim \frac{\int_1^tf(s)ds}{d^{\beta}}.
\end{equation}
On the other hand, a straightforward application of the usual approach would give us
\begin{equation}
\frac{\int_1^tf(s)ds}{d^{\beta}} - A\sqrt{t} \le N_t(d,f) \le \frac{\int_1^tf(s)ds}{d^{\beta}} + A\sqrt{t},
\end{equation}
with high probability. However, this is trivially true for some choices of $f$, e.\ g.\ if $f \in \mathrm{RES}(-\g)$ with~$\gamma>1/2$. This issue demands a result finer than those found in the literature, at least for a particular class of functions. We overcome it by applying Freedman's inequality (Theorem~\ref{teo:freedman}) instead of Azuma's. Freedman's inequality takes into account our knowledge about the past of the martingale to estimate its increments instead of simply bounding them deterministically as it is done in Azuma's. However, Freedman's inequality requires upper bounds on the conditional quadratic variation of the martingale~(see~(\ref{def:quadvar})), which may be more involved than obtaining deterministic bounds for the increments.

For a fixed time $t\ge 1 $, $d \in \N$ and \emph{any} edge-step function $f$, we define the following sequence of random variables
\begin{equation}\label{def:M}
M_s(d,f) := \Ed \left[ N_t(d,f)\middle | \mathcal{F}_s\right].
\end{equation}
Since the degree $d$ and the edge-step function $f$ will be fixed for the remainder of this section, we will omit the dependency on them, denoting simply $\{M_s\}_{s\ge 1}$ when there is no risk of confusion. Observe that by the tower property of the conditional expected value, it follows that $\{M_s\}_{s \ge 1}$ is a martingale.

We will obtain our concentration result applying Freedman's inequality (Theorem \ref{teo:freedman}) to~$M_t$. It requires estimates on the increments of $\{M_s\}_{s\ge 1}$ as well as on its conditional quadratic variation, see \ref{def:quadvar}. We begin by showing that $\{M_s\}_{s\ge 1}$ is actually a bounded increment martingale, which is done in the next lemma. Since it is almost in line with proof of Lemma 8.6 in~\cite{HBook}, we skip some details throughout the proof.
\begin{lemma}[Bounded increments]\label{lemma:boundincr} Let $\{M_s\}_{s\ge 1}$ be as in (\ref{def:M}). Then, it satisfies
	\begin{equation}
	\left | M_{s+1} - M_s \right | \le 4,
	\end{equation}
	for all values of $s$.
\end{lemma}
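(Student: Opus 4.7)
The plan is to bound the increments of the Doob martingale $\{M_s\}_s$ via a coupling argument. Since $M_s = \Ed[M_{s+1} \mid \mathcal{F}_s]$, it suffices to show that the conditional range of $M_{s+1}$ given $\mathcal{F}_s$ has width at most $4$. To this end, I would fix $\mathcal{F}_s$ and let $g_1, g_2$ denote any two multigraphs that $G_{s+1}$ may equal conditional on $\mathcal{F}_s$. Because a single step of the process adds at most one vertex and at most two edges, the multigraphs $g_1, g_2$ share the same total degree (namely $2s+2$) and differ in the degrees of at most four vertices.

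Next I would couple the processes $(G_r^{(i)})_{r \ge s+1}$, $i=1,2$, starting from $G_{s+1}^{(i)}=g_i$, by reusing the same Bernoulli sequence $(Z_r)_{r\ge s+2}$ and the same family of uniform random variables, each read against the cumulative degree distribution in the two processes with vertices consistently ordered. Because the total degrees coincide in both processes for every $r$, whenever a uniform falls outside the ``discrepancy region''---that is, the portion of the cumulative axis whose associated vertex actually differs between the two processes---both processes pick the very same target, introducing no new disagreement. When the uniform falls inside the discrepancy region the two targets may differ, but each is a vertex already present in the (small) set where the two graphs disagree. This yields the invariant that at every time $r\ge s+1$, the set of vertices on which $G^{(1)}_r$ and $G^{(2)}_r$ carry distinct degrees has cardinality at most four.

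Applying this invariant at $r=t$ gives $|N_t(d,f)^{(1)} - N_t(d,f)^{(2)}| \le 4$ almost surely under the coupling, so taking expectations yields $|\Ed[N_t(d,f) \mid G_{s+1}=g_1] - \Ed[N_t(d,f) \mid G_{s+1}=g_2]| \le 4$. Since $g_1, g_2$ were arbitrary, the conditional range of $M_{s+1}$ given $\mathcal{F}_s$ is at most $4$, and the tower property gives $|M_{s+1} - M_s| \le 4$.

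The main obstacle is verifying that the coupling truly preserves the invariant, in particular when $g_1$ and $g_2$ arise from different values of $Z_{s+1}$ (so that one graph carries an extra vertex of degree one) and when the discrepancy region straddles several vertices and shifts along the cumulative axis as degrees update. Ensuring that targets picked inside this region stay confined to the (at most four) already-affected vertices requires the explicit ordering and interval-based bookkeeping used in Hofstad's proof of Lemma~$8.6$, which is why the authors allow themselves to skip the detailed coupling construction and refer to that reference.
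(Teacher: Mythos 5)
Your reduction to bounding the conditional range of $M_{s+1}$ given $\mathcal{F}_s$ is sound, and the observation that two admissible realizations $g_1,g_2$ of $G_{s+1}$ differ in the degrees of at most four vertices is correct. The gap is the coupling invariant, which you assert but do not prove, and which in fact fails for the coupling you describe. If you order the vertices consistently and read one uniform against the cumulative degree distribution of each process, a degree discrepancy at an early vertex shifts \emph{every} later vertex's interval; a uniform landing in one of these shifted boundary slivers then selects two \emph{different, previously agreeing} vertices in the two processes, creating fresh discrepancies. So ``at most four discrepant vertices for all $r\ge s+1$'' is not preserved by interval bookkeeping. A pathwise coupling that does work identifies the $2u$ half-edges of the two processes by creation time and draws the same half-edge index in both: a discrepant half-edge then only ever points to one of the original discrepant vertices, so new discrepant half-edges never touch new vertices. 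But this is a different construction from the one you sketch, and it is the whole content of the lemma, so it cannot be deferred.

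The paper avoids the issue entirely and needs no coupling of the joint future evolution. The key structural fact is that the attachment probability of $v_r$ at time $u$ is $D_u(v_r)/2u$ with a \emph{deterministic} denominator, so by \eqref{eq:vardeg1} and \eqref{eq:vardeg2} the degree of a single vertex evolves as an autonomous Markov chain: $\Pd\left(D_t(v_r)=d, Z_r=1 \mid \mathcal{F}_{s+1}\right)$ is a function of $D_{s+1}(v_r)$ alone. Writing $M_{s+1}-M_s$ as a sum over $r$ of differences of such conditional probabilities (against an independent copy of the process after time $s$), every term with $D_{s+1}(v_r)=D'_{s+1}(v_r)$ vanishes, and at most four terms survive, each bounded by one in absolute value. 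Since $N_t(d,f)$ is a sum of per-vertex indicators and expectation is linear, only these marginal laws matter. If you want to salvage your pathwise approach, replace the interval coupling by the half-edge coupling above; otherwise the per-vertex Markov argument is both shorter and the one actually used in the reference you cite.
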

\begin{proof} For a fixed $s$, consider in the same probability space the process 
	\[
	\{G'_r(f)\}_{r\ge1}\eqd\{G_r(f)\}_{r\ge1},
	\]
	which evolves following exactly the steps of~$\{G_r(f)\}_{r\ge1}$ for all $r\le s$ and then evolves independently for $r\ge s+1$. Let $\{\mathcal{F}'_{r}\}_{r\ge 1}$ be the natural filtration associated to the prime process.
	
	Denote by~$v_r$ the vertex born at time~$r\geq 1$, and recall the the definition of~$(Z_r)_{r\geq 1}$, the Bernoulli variables that control whether a vertex or edge step was taken at each time. Observe that we may write $N_t( d,f)$ as
	\begin{equation}
	N_t(d,f) = \sum_{r=1}^t \mathbb{1}\{D_t(v_r)=  d\}Z_r,
	\end{equation}
	consequently, we may express $\Delta M_s$ as
	\begin{eqnarray}
	M_{s+1} - M_s = \sum_{r=1}^t \Pd\left( D_t(v_r) = d, Z_r =1 \middle | \mathcal{F}_{s+1}\right) - \Pd\left( D_t(v_r) = d,Z_r =1 \middle | \mathcal{F}_{s}\right).
	\end{eqnarray}
	Let $D'_t(v_r)$ and $Z_r'$ denote the counterpart to $D_t(v_r)$ and $Z_r$ in the prime process respectively and note that 
	\begin{equation}
	\Pd\left( D'_t(v_r) = d,Z'_r=1 \middle | \mathcal{F}_{s}\right) = \Pd\left( D'_t(v_r) = d,Z'_r=1 \middle | \mathcal{F}_{s+1}\right),
	\end{equation}
	since $\mathcal{F}_{s+1}$ is $\mathcal{F}_s$ (which is equal to $\mathcal{F}'_s$) with independent information from $D'_t(i)$ and $Z'_r$ added. Moreover, since the evolution of each vertex's degree only depends on itself, we also have
	\begin{equation}
	\Pd\left( D_t(v_r) =  d,Z_r=1 \middle | \mathcal{F}_{s+1}\right) = \Pd\left( D_t(v_r) =d, Z_r=1 \; \middle | \; D_{s+1}(v_r)\right)
	\end{equation}
	and
	\begin{equation}
	\begin{split}
	\Pd\left( D'_t(v_r) = d, Z'_r=1 \middle | \mathcal{F}_{s+1}\right) &= \Ed \left[ \Pd\left( D'_t(v_r) = d, Z_r'=1 \middle | \mathcal{F}'_{s+1}\right) \middle | \mathcal{F}_{s+1}\right]\\
	& = \Ed \left[ \Pd\left( D'_t(v_r)= d, Z'_r=1 \; \middle | \; D_{s+1}'(v_r)\right) \middle | \mathcal{F}_{s+1}\right].
	\end{split}
	\end{equation}
	Now, observe that if $D_{s+1}(v_r) = D'_{s+1}(v_r)$, then 
	\[
	\Pd\left( D_t(v_r) = d, Z_r=1 \; \middle | \; D_{s+1}(v_r)\right) = \Pd\left( D'_t(v_r) = d, Z'_r=1 \; \middle | \; D_{s+1}'(v_r)\right),
	\]
	since both processes evolve with the same distribution. Furthermore, at time $s$, we have that ~$D_{s}(v_r) = D'_{s}(v_r)$, for all $r\le s$, thus, the number of vertices which have $D_{s+1} \neq D_{s+1}'$ is at most $4$. By the definition of $M_s$ and the above observations, the increment $|\Delta M_s|$ is equal to the sum below
	\begin{equation}\label{ineq:Ms}
	\begin{split}
	\left |\sum_{i=1}^t \Ed \left[ \Pd\left( D_t(v_r) = d,Z_r=1 \; \middle | \; D_{s+1}(v_r)\right) - \Pd\left( D'_t(v_r) = d,Z_r=1 \; \middle | \; D_{s+1}'(v_r)\right) \; \middle | \; \mathcal{F}_{s+1}\right] \right |
	\end{split}
	\end{equation}
	and all we have concluded so far leads to the following bound from above
	\begin{equation}\label{ineq:sMs}
	\begin{split}
	| M_{s+1} - M_s| & \le 
	\Ed \left[ \sum_{r=1}^t \mathbb{1}\{D_{s+1}(i)\neq D'_{s+1}(i) \}\; \middle | \; \mathcal{F}_{s+1}\right] \le 4,
	\end{split}
	\end{equation}
	which concludes the proof.
\end{proof}
The next step is to bound the conditional quadratic variation of $\{M_s\}_{s\ge 1}$ in order to apply Freedman's inequality, which is done in the lemma below.
\begin{lemma}[Upper bound for the quadratic variation]\label{lemma:boundquad} Let $f$ be any edge-step function and $\{M_s\}_{s\ge 1}$ be as in (\ref{def:M}). Then, the following bound holds
	\begin{equation}
	\Ed\left[ \left(M_{s+1} -  M_s\right)^2 \middle | \mathcal{F}_s\right] \le \frac{10d^2N_s( \le d,f)}{s},
	\end{equation}
	for all time $s$ and degree $d$.
\end{lemma}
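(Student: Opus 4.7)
The plan is to adapt the coupling argument of Lemma~\ref{lemma:boundincr}. Let $\{G'_r(f)\}_{r\ge 1}$ be the prime process coupled with $\{G_r(f)\}_{r\ge 1}$ so that the two agree through time $s$ and evolve independently afterwards. Following the same computation as in that lemma, I write
\[
M_{s+1} - M_s = \sum_{r=1}^{t} \Ed\left[ \delta_r \,\middle|\, \mathcal{F}_{s+1}\right],
\]
where $\delta_r := \Pd(D_t(v_r)=d,\,Z_r=1\mid D_{s+1}(v_r)) - \Pd(D'_t(v_r)=d,\,Z'_r=1\mid D'_{s+1}(v_r))$. The key observation is that $\delta_r$ vanishes unless $v_r$ is one of the (at most two) vertices whose degree is incremented at step $s+1$ in either the real or the prime process, \emph{and} unless $v_r$ can still reach degree exactly $d$ by time $t$. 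Since degrees are non-decreasing, this forces $D_s(v_r) \le d$, the only exception being a vertex possibly born at time $s+1$ with initial degree~$1$.

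Next, combining the bounded-increments estimate $|M_{s+1}-M_s| \le 4$ from Lemma~\ref{lemma:boundincr} with $|\delta_r| \le 1$, I obtain
\[
\Ed\left[(M_{s+1}-M_s)^2 \,\middle|\, \mathcal{F}_s\right] \le 4\, \Ed\left[ |M_{s+1}-M_s| \,\middle|\, \mathcal{F}_s\right] \le 4\, \Ed\left[\#\{\text{contributing } v_r \} \,\middle|\, \mathcal{F}_s\right].
\]
By the PA-rule~(\ref{def:PArule}), the probability that a fixed existing vertex $v_r$ is chosen as an endpoint of the step at time $s+1$ in either of the coupled processes is at most $2D_s(v_r)/s$. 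Summing over the vertices with $D_s(v_r) \le d$ and using $\sum_{r:\,D_s(v_r)\le d} D_s(v_r) \le d\, N_s(\le d,f)$, the existing-vertex contribution yields a main term of order $d\, N_s(\le d,f)/s$, while the vertices possibly born at time $s+1$ contribute an additional $O(f(s+1))$ term.

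It then remains to absorb the $O(f(s+1))$ term into the stated bound $10 d^2 N_s(\le d,f)/s$. This is handled by crude estimates exploiting $d \ge 1$ together with the fact that $N_s(\le d,f) \ge 1$ in the regime of $s$ where the Freedman-inequality application matters; the $d^2$ factor in the claim provides enough slack to soak up constants. The main obstacle is exactly this bookkeeping across the two coupled processes — verifying that the number of contributing vertices is correctly counted and that the newly-created vertex's contribution, which scales naturally as $f(s+1)$ rather than $D_s(v_r)/s$, can be cleanly merged with the existing-vertex contribution to produce the single $d^2 N_s(\le d,f)/s$-form with the explicit constant~$10$ of the statement.
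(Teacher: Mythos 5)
Your argument runs on the same coupling and the same decomposition of $M_{s+1}-M_s$ as the paper's, and your key probabilistic input, $\Pd(D_{s+1}(v_r)\neq D'_{s+1}(v_r)\mid\mathcal F_s)\le 2D_s(v_r)/s$ for vertices alive at time $s$, is exactly the paper's inequality (\ref{ineq:diff1}). The one genuine deviation is how you pass from the first to the second moment: you linearize via $(\Delta M_s)^2\le 4|\Delta M_s|$ (using Lemma~\ref{lemma:boundincr}) and then bound $\Ed[|\Delta M_s|\mid\mathcal F_s]$ by $2dN_s(\le d,f)/s$, yielding $8dN_s(\le d,f)/s$ for the main term, which sits inside the stated $10d^2N_s(\le d,f)/s$ since $d\ge1$. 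The paper instead applies Jensen, expands the square, and bounds the off-diagonal terms by $4D_s(v_r)D_s(v_u)/s^2$ using the conditional independence of the two coupled processes, then uses $N_s(\le d,f)\le s$ to fold $8d^2N_s^2(\le d,f)/s^2$ back into $d^2N_s(\le d,f)/s$. Your route is slightly more elementary (no cross-term estimate needed) at the cost of relying on the a.s.\ increment bound; both are valid for the main term.

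The weak point is your treatment of the vertex possibly born at time $s+1$. As you correctly observe, its contribution scales like $f(s+1)$ and is \emph{not} controlled by $D_s(v_r)/s$; but your proposed absorption fails as stated, because $N_s(\le d,f)$ can equal $0$ (e.g.\ $d=1$ with no degree-one vertices alive at time $s$) while $f(s+1)>0$, and the lemma is asserted for \emph{all} $s$ and $d$ — "the regime of $s$ where the Freedman application matters" is not available when proving the lemma itself. To be fair, the paper's proof silently drops this same term by formally applying (\ref{ineq:diff1}) to $r=s+1$, where it reads $0$. The clean repair is not to absorb the term but to carry it: the newborn contributes an additive $Cf(s+1)$ to the conditional quadratic variation, and in Proposition~\ref{thm:conce} the sum $\sum_{s\le t}Cf(s+1)=O(F(t))$ is dominated by $\sigma^2_{d,t}$, so nothing downstream changes. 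With that adjustment (or with the newborn term excluded by fiat, as the paper implicitly does), your proof is complete.
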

\begin{proof} By (\ref{ineq:Ms}) and Jensen's inequality we have that $(M_{s+1} - M_s)^2$ is bounded from above by
	\begin{equation}
	\begin{split}
	\Ed \left[ \left(\sum_{r=1}^t \Pd\left( D_t(v_r) = d,Z_r=1 \; \middle | \; \mathcal{F}_{s+1}\right) - \Pd\left( D'_t(v_r) = d, Z'_r=1 \; \middle | \; \mathcal{F}'_{s+1}\right)\right)^2 \; \middle | \; \mathcal{F}_{s+1}\right]. 
	\end{split}
	\end{equation}
	Taking the conditional expectation \textit{w.r.t} $\mathcal{F}_s$, using the tower property and recalling that we must have $D_s(v_r) \le d$ yields
	\begin{equation}\label{ineq:quadratic1}
	\begin{split}
	\Ed\left[ \left(\Delta M_s\right)^2 \middle | \mathcal{F}_s\right] 
	& \le  \Ed \left[ \left(\sum_{r=1}^t \mathbb{1}\{D_{s+1}(v_r) \neq D'_{s+1}(v_r) \} \mathbb{1}\{D_s(v_r) \le d\}\right)^2 \; \middle | \; \mathcal{F}_{s}\right].
	\end{split}
	\end{equation}
	
	Now, observe that the following upper bound holds deterministically
	\begin{equation}
	\mathbb{1}\{D_{s+1}(v_r) \neq D'_{s+1}(v_r) \} \le \Delta D_s(v_r) + \Delta D'_s(v_r)
	\end{equation}
	and identities (\ref{eq:vardeg1}) and (\ref{eq:vardeg2}) give us
	\begin{equation}
	\Ed \left[ \Delta D'_s(v_r) \middle | \mathcal{F}_s\right] = \Ed \left[ \Delta D_s(v_r) \middle | \mathcal{F}_s\right] = \left(1-\frac{f(s+1)}{2}\right)\frac{D_s(v_r)}{s}, 
	\end{equation}
	which, in turn, leads to
	\begin{equation}\label{ineq:diff1}
	\Pd\left( D_{s+1}(v_r) \neq D'_{s+1}(v_r) \middle | \mathcal{F}_s\right) \le \frac{2D_s(v_r)}{s},
	\end{equation}
	for all $r \in \{1,\cdots, t\}$. For~$u\geq 1$, using that the product $\Delta D_s(v_r)\Delta D_s(v_u)$ is non-zero if and only if both vertices are selected at the same time, and that $\Delta D_s(v_r)$ and~$\Delta D'_s(v_u)$ are independent given~$\mathcal{F}_s$, we also derive
	\begin{equation}\label{ineq:diff2}
	\Pd\left( D_{s+1}(v_r) \neq D'_{s+1}(v_r), D_{s+1}(v_u) \neq D'_{s+1}(v_u) \middle | \mathcal{F}_s\right) \le \frac{4D_s(v_r)D_s(v_u)}{s^2}.
	\end{equation}
	Expanding the summand on the RHS of (\ref{ineq:quadratic1}) and substituting (\ref{ineq:diff1}) and (\ref{ineq:diff2}) in it, we obtain
	\begin{equation}
	\begin{split}
	\lefteqn{\Ed\left[ \left(\Delta M_s\right)^2 \middle | \mathcal{F}_s\right]}\phantom{**}\\ & \le 2\sum_{r=1}^t\frac{D_s(v_r)\mathbb{1}\{D_s(v_r) \le d\}}{s} + 8\sum_{1\le r < u\le t}\frac{D_s(v_r)D_s(v_u)\mathbb{1}\{D_s(v_r) \le d, D_s(v_u) \le d\}}{s^2} \\
	& \le \frac{2dN_s( \le d, f)}{s} + 8d^2\sum_{1\le r <u\le t}\frac{\mathbb{1}\{D_s(v_r) \le d, D_s(v_u) \le d\}}{s^2} \\
	&\le \frac{10d^2N_s( \le d, f)}{s},
	\end{split}
	\end{equation}
	since
	\begin{equation*}
	\begin{split}
	\lefteqn{\sum_{1\le r < u\le t}\mathbb{1}\{D_s(v_r) \le d, D_s(v_u) \le d\}}\phantom{********}\\  &\le \left( \sum_{r=1}^t\mathbb{1}\{D_s(v_r) \le d\}\right)\left( \sum_{u=1}^t\mathbb{1}\{D_s(v_u) \le d\}\right)
	 = N^2_s(\le d, f)
	\end{split}
	\end{equation*}
	and $N_s( d,f)$ is less than $s$ deterministically. This finishes the proof.
\end{proof}
Now we are able to prove a general concentration result for $N_t(d,f)$, which holds for any edge-step function $f$. Then, we obtain Theorem~\ref{thm:powerlaw} as a consequence of exploiting additional information about $f$.
\subsection{The General case} For the general picture, our estimates of the deviation of $N_t(d, f)$ from its expected value depend on
\[
\sum_{s=1}^t\frac{1}{s}\sum_{r=1}^sf(r)
\]
which cannot be well estimated in this degree of generality. In this section we will prove a general concentration result, which holds for any $f$, but later we will see that this result can be very sharp if more information on the asymptotic behavior of $f$ is provided. For now, our goal is to prove the proposition below
\begin{proposition}\label{thm:conce}Let $f$ be any edge-step function. Then, for all $\lambda >0$ and $d \in \N$ it follows that 
	\begin{equation}
	\P \left( \left| N_t( d,f) - \Ed\left[N_t( d,f)\right]\right| \ge \lambda \right) \le \exp \left \lbrace -\frac{\lambda^2}{2\sigma^2_{d,t} + 8\lambda/3}\right \rbrace + \exp\left \lbrace -\frac{\lambda^2}{2F(t) +4\lambda/3} \right \rbrace,
	\end{equation}
	where
	\begin{equation}
	\label{eq:concesigma}
	\sigma^2_{d,t} := 10d^2\sum_{s=1}^{t -1}  \frac{F(s)+\lambda}{s}.
	\end{equation}
\end{proposition}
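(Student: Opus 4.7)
The plan is to apply Freedman's inequality (Theorem~\ref{teo:freedman}) to the Doob martingale $(M_s)_{s\geq 1}$ defined in~(\ref{def:M}). Lemma~\ref{lemma:boundincr} provides the deterministic increment bound $R=4$, while Lemma~\ref{lemma:boundquad} gives
\[
\Ed\!\left[(M_{s+1}-M_s)^2 \,\middle|\, \mathcal{F}_s\right]\leq \frac{10\,d^2\,N_s(\leq d,f)}{s}.
\]
Notice that this is a \emph{random} bound, depending on $N_s(\leq d,f)$. Since Freedman's inequality requires a deterministic upper bound on the total conditional quadratic variation, the heart of the argument is to decompose on a good event on which $\sum_{s}N_s(\leq d,f)/s$ is controlled by the deterministic quantity $\sigma^2_{d,t}$ in~(\ref{eq:concesigma}).

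First I would introduce the good event
\[
\mathcal{A}_t := \bigl\{\,V_s(f)\leq F(s)+\lambda \text{ for every } 1\leq s\leq t-1\,\bigr\},
\]
and split
\[
\Pd\bigl(|N_t(d,f)-\Ed[N_t(d,f)]|\geq \lambda\bigr)\leq \Pd\bigl(|M_t-M_1|\geq\lambda,\, \mathcal{A}_t\bigr)+\Pd(\mathcal{A}_t^c).
\]
Because $N_s(\leq d,f)\leq V_s(f)$, on $\mathcal{A}_t$ Lemma~\ref{lemma:boundquad} yields
\[
\sum_{s=1}^{t-1}\Ed\!\left[(M_{s+1}-M_s)^2\mid\mathcal{F}_s\right]\leq 10\,d^2\sum_{s=1}^{t-1}\frac{F(s)+\lambda}{s}=\sigma^2_{d,t},
\]
so that Freedman's inequality with increment bound $R=4$ and deterministic quadratic-variation bound $\sigma^2_{d,t}$ produces exactly the first exponential in the statement.

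It then remains to bound $\Pd(\mathcal{A}_t^c)$. Here the key observation is that $V_s(f)=\sum_{r=1}^s Z_r$ is a sum of \emph{independent} $\mathrm{Ber}(f(r))$ random variables with $\Ed[V_s(f)]=F(s)$, so that $V_s(f)-F(s)$ is a martingale with increments in $[-1,1]$ and conditional-variance sum at most $\sum_{r=1}^{t}f(r)(1-f(r))\leq F(t)$. Applying a maximal Bernstein-type inequality (or, equivalently, Freedman with $R=1$) to this much simpler martingale gives
\[
\Pd(\mathcal{A}_t^c)=\Pd\!\left(\max_{s\leq t-1}\bigl(V_s(f)-F(s)\bigr)\geq\lambda\right)\leq \exp\!\left(-\frac{\lambda^2}{2F(t)+4\lambda/3}\right),
\]
which is the second exponential in the statement, and the proof is complete.

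The main obstacle is precisely the one foreshadowed in the paragraphs preceding the proposition: the Doob martingale $(M_s)_s$ does not have a useful deterministic bound on its quadratic variation, so Freedman's inequality cannot be invoked directly. Resolving this via the event $\mathcal{A}_t$ is inexpensive because $V_s(f)$ is a sum of independent Bernoullis whose fluctuations have Bernstein tails of the right order, producing the additive second exponential without degrading the main Freedman bound.
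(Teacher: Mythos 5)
Your proposal is correct and follows essentially the same route as the paper: two applications of Freedman's inequality, one to the Doob martingale $(M_s)$ using Lemmas~\ref{lemma:boundincr} and~\ref{lemma:boundquad}, and one to the independent-Bernoulli martingale $V_s(f)-F(s)$ to control the event on which the conditional quadratic variation exceeds $\sigma^2_{d,t}$. The only difference is presentational — you condition on the good event $\mathcal{A}_t$ directly (which sits inside $\{W_t\le\sigma^2_{d,t}\}$, exactly the form in which Freedman's inequality is stated), whereas the paper implements the same decomposition via the stopping time $\tau=\inf\{s: V_s-F(s)\ge\lambda\}$ and the stopped martingale $M_{s\wedge\tau}$.
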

\begin{proof} We apply Freedman's inequality (Theorem~\ref{teo:freedman}) to the Doob martingale~$\{M_s\}_{s \ge 1}$ defined on (\ref{def:M}). Before, however, it will be important to control the number of vertices at time~$t$,~$V_t$. Recall that~$V_t$ is~$1$ plus the sum of the independent random variables~$Z_2, \cdots, Z_t$, and that~$Z_s \stackrel{d}{=} \mathrm{Ber}(f(s))$. Thus $V_t - F(t)$ is a mean zero martingale whose increments are bounded by $2$. And since the $Z_s$'s are independent, it follows that
	\begin{equation}
	\begin{split}
	\sum_{s=1}^{t-1} \Ed \left[\left( V_{s+1}-F(s+1) - V_{s} + F(s)\right)^2\; \middle | \; \mathcal{F}_s\right] & = \sum_{s=1}^{t-1} \Ed \left[\left( Z_{s+1} - f(s+1)\right)^2\; \middle | \; \mathcal{F}_s\right]  \le  F(t). \\
	\end{split}
	\end{equation}
	Then, applying Freedman's inequality on the martingale $V_t - F(t)$, with $\sigma^2 = F(t)$, we obtain that
	\begin{equation}\label{ineq:stop}
	\Pd\left( \max_{s \le t} \{V_s -F(s) \}\ge \lambda \right) \le \exp\left \lbrace -\frac{\lambda^2}{2F(t) +4\lambda/3} \right \rbrace.
	\end{equation}
	Now, for a fixed $\lambda > 0$, define the stopping time
	\begin{equation}
	\tau := \inf \left \lbrace s \ge 1 \; \middle | \; V_s - F(s) \ge \lambda\right \rbrace.
	\end{equation}
	Observe that (\ref{ineq:stop}) gives us
	\begin{equation}
	\Pd\left( \tau \le t \right) = \Pd\left( \max_{s \le t} \{V_s -F(s) \}\ge \lambda \right) \le \exp\left \lbrace -\frac{\lambda^2}{2F(t) +4\lambda/3} \right \rbrace.
	\end{equation}
	Now consider the stopped martingale $\{M_{s\wedge\tau}\}_{s \ge 1}$, whose conditional quadratic variation is bounded in the following way
	\begin{equation}
	\begin{split}
	\sum_{s=1}^{t-1} \Ed\left[ \left(\Delta M_{s\wedge \tau}\right)^2 \middle | \mathcal{F}_s\right]
	&\stackrel{\text{Lemma \ref{lemma:boundquad}}}{\le}\, \sum_{s=1}^{t-1} \frac{10d^2N_s(\le d, f)\mathbb{1}\{s \leq \tau\}}{s} \\
	& \le 10d^2\sum_{s=1}^{t\wedge \tau -1}\frac{V_s}{s}  \le  10d^2\sum_{s=1}^{t\wedge \tau -1}  \frac{F(s)+\lambda}{s},
	\end{split}
	\end{equation}
	deterministically, since the number of vertices having degree at most $d$ is less than the total number of vertices, and $V_s \le F(s) +\lambda$ whenever $s < \tau$. Recalling~\eqref{eq:concesigma}, we have that the LHS above is smaller than or equal to~$\sigma^2_{d,t}$. Defining then
	\[
	W_t := \sum_{k=1}^{t-1} \mathbb{E} \left[(M_{k+1}-M_k)^2\middle|\mathcal{F}_k \right],
	\]
	we have by Freedman's inequality, 
	\begin{equation}
	\Pd \left( \left | M_{t \wedge \tau } - \Ed N_{t \wedge \tau}( d,f) \right| \ge \lambda, W_{t\wedge \tau } \le \sigma^2_{d,t}\right) \le \exp \left \lbrace -\frac{\lambda^2}{2\sigma^2_{d,t} + 8\lambda/3}\right \rbrace.
	\end{equation}
	Finally, we obtain
	\begin{equation}
	\begin{split}
	\Pd \left( \left | M_{t} - \Ed N_{t}( d,f) \right| \ge \lambda\right) & \le \Pd \left( \left | M_{t \wedge \tau } - \Ed N_{t \wedge \tau}( d,f) \right| \ge \lambda, \tau > t\right) +  \Pd \left( \tau \le t \right)\\ 
	& \le \exp \left \lbrace -\frac{\lambda^2}{2\sigma^2_{d,t} + 8\lambda/3}\right \rbrace + \exp\left \lbrace -\frac{\lambda^2}{2F(t) +4\lambda/3} \right \rbrace,
	\end{split}
	\end{equation}
	finishing the proof.
\end{proof}
\subsection{Index of Regular variation in $(-1,0]$} Now, we will explore Proposition~\ref{thm:conce} when more properties of~$f$ are available in order to prove Theorem~\ref{thm:powerlaw}. As we will see, information about the asymptotic behavior of ~$f$ is enough to derive useful concentration results. Our goal is to prove that the fluctuations around the mean of $N_t(d,f)$ are of order $\sqrt{F(t)}$, which can be of order much smaller than~$\sqrt{t}$, as discussed in the beginning of this section. 
\begin{proof}[Proof of Theorem~\ref{thm:powerlaw}]
	We apply Proposition \ref{thm:conce} combined with the fact that we are now considering edge-step functions which are regularly varying, which gives us extra knowledge about the quantities involved in the statement of Proposition~\ref{thm:conce}.
	
	We begin observing that by Lemma~\ref{lemma:rvint} we have
	\begin{equation}\label{eq:integral}
	F(t) \sim \int_1^tf(s)ds \sim (1-\gamma)^{-1}\ell(t)t^{1-\gamma},
	\end{equation}
	for $\gamma \in [0,1)$. Consequently, we have that
	\begin{equation}
	\sum_{s=1}^t\frac{F(s)+\lambda}{s} \le (1-\gamma)^{-1} F(t)+\lambda\log(t).
	\end{equation}
	We set $\lambda = A\sqrt{40d^2(1-\gamma)^{-1}F(t)}$ with~$A<\sqrt{F(t)(1-\gamma)^{-1}}(4d\log(t))^{-1}$. Using Proposition~\ref{thm:conce} we obtain that for large enough $t$
	\begin{equation}
	\label{ineq:estifinal}
	\begin{split}
	\lefteqn{\P \left( \left| N_t( d,f) - \Ed\left[N_t( d,f)\right]\right| \ge A\sqrt{40d^2F(t)(1-\gamma)^{-1}} \right) }
	\phantom{**}
	\\
	&\le \exp \left \lbrace -\frac{\lambda^2}{20 d^2(1-\gamma)^{-1} F(t)+\lambda(20d^2\log(t)+ 8/3)}\right \rbrace + \exp\left \lbrace -\frac{\lambda^2}{2F(t) +4\lambda/3} \right \rbrace
	\\
	&\le \exp \left \lbrace -\frac{A^2\cdot 40d^2 (1-\gamma)^{-1}F(t)}{20d^2 (1-\gamma)^{-1} F(t)+A\sqrt{40d^2 (1-\gamma)^{-1}F(t)}\cdot(20d^2\log(t)+ 8/3)}\right \rbrace 
	\\
	&\quad+ \exp\left \lbrace -\frac{A^2\cdot 40d^2 (1-\gamma)^{-1}F(t)}{2F(t) +4/3\cdot A\sqrt{40d^2 (1-\gamma)^{-1}F(t)}} \right \rbrace
	\\
	& \leq 2\exp \left \lbrace -A^2\right \rbrace.
	\end{split}
	\end{equation}
	To prove the Theorem from the above result, note that by triangle inequality and the fact that~$N_t( d,f)\leq V_t(f)$ deterministically, we may obtain
	\begin{equation}\label{ineq:tri}
	\begin{split}
	\left | \hat{P}_t( d) - \frac{\Ed N_t( d,f)}{F(t)}\right| 
	& \le \left| \frac{N_t(d,f)(F(t)-V_t(f))}{V_t(f)F(t)}\right | + \left| \frac{N_t( d,f)}{F(t)} - \frac{\Ed N_t( d,f)}{F(t)}\right |
	\\
	& \le \left| \frac{V_t(f)}{F(t)} - 1\right | + \left| \frac{N_t( d,f)}{F(t)} - \frac{\Ed N_t( d,f)}{F(t)}\right |
	\end{split}
	\end{equation}
	By the multiplicative form of the Chernoff bound, we have
	\begin{equation}
	\Pd \left( \left| \frac{V_t}{F(t)} - 1\right | > \frac{A}{\sqrt{F(t)}}\right) \le \exp\left\lbrace -\frac{A^2}{3} \right\rbrace.
	\end{equation}
	The second term is then bounded by~(\ref{ineq:estifinal}), giving
	\begin{equation}
	\Pd \left(  \left| \hat{P}_t( d) - \frac{\Ed N_t( d,f)}{F(t)}\right| > 10d\frac{A}{\sqrt{(1-\gamma)F(t)}} \right) \leq  \exp\left\lbrace -\frac{A^2}{3} \right\rbrace+ 2e^{-A^2},
	\end{equation}
	finishing the proof of the Theorem.
\end{proof}

\section{The case $\gamma \in [1,\infty)$}\label{sec:gamma1}

In this section we prove Theorem~\ref{t:notpowerlaw}, which states that when the index of regular variation is less than $-1$ the empirical distribution $\{\hat{P}_t(d,f)\}_{t \in \N}$ converges to zero almost surely for any fixed $d$. The mass on finite degrees is completely lost in this regime. We start by showing that this phenomenon happens in expectation.
\begin{proposition}\label{prop:l1conv}Let $f \in \mathrm{RES}(-\gamma)$, with $\gamma \in [1,\infty)$. Then, for all $d\in \N$, we have that 
	\[
	\lim_{t \rightarrow \infty} \frac{\Ed N_t(d,f)}{F(t)} = 0.
	\]
\end{proposition}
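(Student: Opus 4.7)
My plan is to induct on $d\in\N$, working from the recurrence for $\Ed N_t(d,f)$ given by Lemma~\ref{lemma:recurntd}. Because $f(t)\to 0$, the recurrence's leading behavior is
\begin{equation*}
\Ed N_{t+1}(d)\leq\Bigl(1-\tfrac{d}{t}+O\bigl(t^{-2}+f(t+1)/t\bigr)\Bigr)\Ed N_t(d)+\tfrac{d-1}{t}\Ed N_t(d-1)+O(t^{-2})\Ed N_t(d-2)+f(t+1)\mathbb{1}\{d=1\},
\end{equation*}
so setting $a_t:=t^d\Ed N_t(d)$ and using $(t+1)^d(1-d/t)=t^d+O(t^{d-2})$ will produce a recurrence $a_{t+1}\leq a_t(1+\epsilon_t)+b_t$ with $\epsilon_t=O(f(t+1)/t+1/t^2)$ summable (since $\sum_s s^{-1-\gamma}\ell(s)<\infty$ for $\gamma\geq 1$) and $b_t$ bounded above by the forcing terms. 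Telescoping will yield the master bound
\begin{equation*}
\Ed N_t(d)\leq \frac{C_d}{t^d}\Bigl(1+\sum_{s=1}^t sf(s)\mathbb{1}\{d=1\}+(d-1)\sum_{s=1}^t s^{d-1}\Ed N_s(d-1)+\sum_{s=1}^t s^{d-2}\Ed N_s(d-2)\Bigr).
\end{equation*}

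For the base case $d=1$ the statement reduces to $\tfrac{1}{tF(t)}\sum_{s=1}^t sf(s)\to 0$. Writing $f(s)=s^{-\gamma}\ell(s)$, I would apply Karamata's theorem case by case: when $1<\gamma<2$, the numerator is of order $t^{2-\gamma}\ell(t)$ while $F(t)\to F(\infty)>0$, so the ratio decays like $t^{1-\gamma}\ell(t)$; when $\gamma\geq 2$, the numerator grows at most logarithmically (or is bounded) while $F(t)$ stays bounded away from $0$; and when $\gamma=1$, Karamata for slowly varying functions gives $\sum sf(s)\sim t\ell(t)$, reducing the claim to $\ell(t)/F(t)\to 0$. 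For this last sub-case, either $F(t)\to\infty$, in which case the standard slow-variation fact that $\ell(t)/\int_1^t\ell(s)s^{-1}\mathrm{d} s\to 0$ whenever the integral diverges does the job; or $F(t)$ stays bounded, in which case the summability of $\ell(s)/s$ combined with slow variation of $\ell$ forces $\ell(t)\to 0$, and then $\Ed N_t(1)\leq C\ell(t)\to 0$ directly while $F(t)$ is bounded below by $F(1)=1$.

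For the inductive step with $d\geq 2$, I assume $\Ed N_s(d')/F(s)\to 0$ for every $d'<d$. Given $\delta>0$, I pick $T=T(\delta)$ so that $\Ed N_s(d-1)\leq\delta F(s)$ for $s\geq T$; then, using monotonicity of $F$,
\begin{equation*}
\sum_{s=1}^t s^{d-1}\Ed N_s(d-1)\leq C_T+\delta F(t)\sum_{s=T+1}^t s^{d-1}\leq C_T+\frac{\delta F(t)t^d}{d},
\end{equation*}
with an analogous but smaller-order bound for the $s^{d-2}\Ed N_s(d-2)$ sum. Dividing the master bound by $F(t)$ and using $t^dF(t)\to\infty$ (which holds since $F(t)\geq 1$) will give $\limsup_t\Ed N_t(d)/F(t)\leq (d-1)C_d\delta/d$, and arbitrariness of $\delta$ closes the induction. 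The main obstacle will be the base case at $\gamma=1$, where comparison between $\ell(t)$ and its logarithmic integral requires the non-trivial slow-variation estimate mentioned above; by contrast, the remaining work is routine bookkeeping using the deterministic bounds $N_t(d)\leq V_t\leq t$ and $f(t)\to 0$ to absorb the error terms produced when solving the recurrence for $a_t$.
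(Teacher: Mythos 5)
Your proposal is correct and follows essentially the same route as the paper: induction on $d$, unrolling the recurrence of Lemma~\ref{lemma:recurntd} after checking that the multiplicative error $O(f(t)/t+t^{-2})$ is summable for $\gamma\ge 1$, Karamata's theorem for the resulting sums, and --- for the critical case $\gamma=1$ --- the slow-variation fact $\ell(t)=o\bigl(\sum_{s\le t}\ell(s)s^{-1}\bigr)$ (the paper's Corollary~\ref{cor:lFt}), which you correctly identify as the crux. The only (inessential) difference is in closing the induction: the paper carries the quantitative hypothesis $\Ed N_t(k)\le C_k\,\ell(t)$ and applies Corollary~\ref{cor:lFt} at the end, whereas you carry only the qualitative statement $\Ed N_t(k)=o(F(t))$ and split the sum at a threshold $T(\delta)$ using monotonicity of $F$; both arguments are valid.
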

\begin{proof}We proceed by induction on $d$. Again, by the Representation Theorem (Theorem~\ref{thm:repthm}), there exists a slowly varying function~$\ell$ such that $f(t)=t^{-\g}\ell(t)$ for all $t \ge 1$. In order to simplify our writing, we let $a_t(d)$ be $\Ed N_{t}(d,f)$.
	
	\underline{Base case of the induction.} According to Lemma \ref{lemma:recurntd}, we have, for $d=1$
	\begin{equation}
	a_{t+1}(1) \leq \left(1 - \frac{1}{t} + \frac{\ell(t)}{2t^{1+\g}}\right)a_t(1) + \frac{\ell(t+1)}{(t+1)^{\gamma}} + O\left( \frac{F(t)}{t^{2}}\right).
	\end{equation}
	Expanding the above recurrence relation yields
	\begin{equation}\label{ineq:recu}
	\begin{split}
	a_{t+1}(1) &\leq \frac{\ell(t+1)}{(t+1)^{\gamma}}+O\left( \frac{F(t)}{t^{2}}\right) + \sum_{s=1}^t \left[\left(\frac{\ell(s)}{s^{\g}}+O\left( \frac{F(s)}{s^{2}}\right)\right) \prod_{r=s}^t\left(1 - \frac{1}{r} + \frac{\ell(r)}{2r^{1+\g}}\right)  \right]\\
	& \le \exp\left\lbrace \sum_{r=1}^{\infty} \frac{\ell(r)}{2r^{1+\g}} \right\rbrace\sum_{s=1}^t \left[\left(\frac{\ell(s)}{s^{\g}}+O\left( \frac{F(s)}{s^2}\right)\right) \exp\left\lbrace -\sum_{r=s}^{t} \frac{1}{r} \right\rbrace\right] +o(1)\\
	& \le  \exp\left\lbrace \sum_{r=1}^{\infty} \frac{\ell(r)}{2r^{1+\g}} \right\rbrace \frac{1}{t}\sum_{s=1}^t \left[\left(\frac{\ell(s)}{s^{\g-1}}+O\left( \frac{F(s)}{s}\right)\right) \right] +o(1).
	\end{split}
	\end{equation}
	When $\g > 1$ it is straightforward to verify that $a_t(d) < C_d$ for all $t \ge 1$. Thus, from now on, we assume $\gamma =1$, which is the hardest case. Observe that, by Karamata's Theorem (Theorem~\ref{thm:karamata}), it follows that
	\begin{equation}\label{ineq:obs1}
	\exp\left\lbrace \sum_{r=1}^{\infty} \frac{\ell(r)}{2r^{1+\g}} \right\rbrace \le c_1
	\end{equation}
	and by Corollary~\ref{cor:a3}, we also have
	\begin{equation}\label{ineq:obs2}
	\lim_{s \rightarrow \infty} \frac{F(s)}{s} = 0 \implies \lim_{t \rightarrow \infty}\frac{1}{t}\sum_{s=1}^{t}\frac{F(s)}{s} =0.
	\end{equation}
	By Lemma~\ref{lemma:rvint}, for large enough $t$, we have that
	\begin{equation}
	\begin{split}
		\frac{1}{t}\sum_{s=1}^t\ell(s) \le \frac{2t\ell(t)}{t} = 2\ell(t).
	\end{split}
	\end{equation}
	Therefore, we have that, for some positive constant $C$,
	\begin{equation}
	a_t(1) \le C\ell(t)
	\end{equation}
	and by Corollary~\ref{cor:lFt} (whose proof we postpone to the Appendix) it follows that
	\begin{equation}
	\lim_{t \rightarrow \infty}\frac{a_t(1)}{F(t)} = 0.
	\end{equation}
	concluding the base step.
	
	\underline{Inductive step.} Assume that for all $k\le d-1$ there exists $C_k$ such that
	\begin{equation}
	\label{eq:gama1exp}
	a_t(k) \le C_k \ell(t).
	\end{equation}
	Recall the recurrence relation given by (\ref{eq:end}), which gives us
	\[
	a_{t+1}(d) \leq\left(1 - \frac{d}{t} + \frac{d\ell(t)}{2t^{1+\g}}\right)a_t(d) + \left( \frac{d-1}{t} - \frac{(d-1)\ell(t)}{2t^{1+\g}}\right)a_{t}(d-1) + O_d\left( \frac{F(t)}{t^2}\right).
	\]
	Expanding the above equality and recalling that $\gamma = 1$, we obtain
	\begin{equation}
	\begin{split}
	a_{t+1}(d) &= \sum_{s=1}^t \left[\left(\left(\frac{d-1}{s} - \frac{(d-1)\ell(s)}{2s^{2}}\right)a_{s}(d-1)+O_d\left( \frac{F(s)}{s^2}\right) \right)\prod_{r=s+1}^t\left(1 - \frac{d}{r} + \frac{d\ell(r)}{2r^{2}}\right)\right] \\
	&\le \frac{c_d}{t^d}\sum_{s=1}^t \left[ s^{d-1}a_s(d-1) +O_d\left( F(s)s^{d-2}\right)\right].
	\end{split}
	\end{equation}
	From Corollary~\ref{cor:a3} it follows that, for some~$\varepsilon>0$,
	\[
	\lim_{t \to \infty} \frac{c_d}{t^d}\sum_{s=1}^t O_d\left( \frac{F(s)}{s}s^{d-1}\right) \leq \frac{c_d}{t^d}\sum_{s=1}^t O_d\left( \frac{s^\varepsilon}{s}s^{d-1}\right)\leq c_d t^{-(1-\varepsilon)}
	\]
	Finally, the inductive hypothesis  and Karamata's theorem lead to
	\[
	\frac{1}{t^d}\sum_{s=1}^ts^{d-1}a_s(d-1) \le \frac{C_{d-1}}{t^d}\sum_{s=1}^ts^{d-1}\ell(s) \le c_d'\ell(t),
	\]
	proving the inductive step, since $\ell(t) \geq t^{-(1-\varepsilon)}$ for sufficiently large~$t$.
	
	Combining~\eqref{eq:gama1exp} with Corollary~\ref{cor:lFt} it is proved that
	\[
	\lim_{t \rightarrow \infty} \frac{a_t(d)}{F(t)} = 0	
	\]
	for all $d \in \N$, finishing the proof.
\end{proof}
From Proposition~\ref{prop:l1conv} we will prove the \textit{a.s.} convergence employing a second moment estimate. For this we will need a new definition and a few lemmas.
\begin{definition}[$d$-admissible vectors] Given $d,t,r,s\in\mathbb{N}$, with $r<s<t$ and two vertices $v_s$ and $v_r$ born at time~$r$ and~$s$ respectively, we say that two vectors $\vec{x}_{s,t}:=(x_u)_{u=s+1}^t$ and $\vec{y}_{r,t}:=(y_u)_{u=r+1}^t$ are \textbf{$d$-admissible} for $v_s$ and $v_r$ if~$x_u, y_u \in \{0,1,2\}$ for all $u$, the sum of their coordinates is at most $d$, $y_s \neq 2$ and the vectors do not have a $2$ in the same coordinate.
\end{definition}
Observe that given a vertex $v_s$, the vector $\vec{x}_{s,t} \in \{0,1,2\}^{t-s}$ induces an event in which the trajectory of the degree of $v_s$ up to time $t$ is completely characterized by said vector. More specifically, $\vec{x}_{s,t}=(x_u)_{u=s+1}^t$ characterizes the event
\[
\{\Delta D_t(v_s) = x_t\} \cap \cdots \cap \{\Delta D_{s+1}(v_s) = x_{s+1}\} \cap \{Z_s = 1\}.
\]
Thus, two vectors are $d$-admissible if the events induced by them imply that both~$D_t(v_r)$ and~$D_t(v_s)$ are at most $d$ and that their intersection is not empty. Moreover, given two $d$-admissible vectors~$\vec{x}_{s,t}$ and $\vec{y}_{r,t}$ we denote by $\mathbb{P}_{\vec{x}_{s,t},\vec{y}_{r,t}}$ the distribution $\Pd$ conditioned on the intersection of the events induced by the vectors. Also, to simplify our writing, fixed the vertices $v_s$ and $v_r$ and two $d$-admissible vectors, we write for all $u$
\begin{equation}
\Delta_u := \Delta D_u(v_s); \;\; \Delta'_u := \Delta D_u(v_r).
\end{equation}
The following Lemma is the first step in obtaining a decorrelation estimate that will allow us to estimate the variance of~$N_t(\leq d, f)$, the number of vertices at time~$t$ with degree lesser than or equal to~$d$.
\begin{lemma}\label{lemma:decor1} Let $\vec{x}_{s,t+1}=(x_u)_{u=s+1}^{t+1}$ and $\vec{y}_{r,t+1}=(y_u)_{u=r+1}^{t+1}$ be two $d$-admissible vectors for some $d \in \N$ and vertices $v_s$ and $v_r$. Then,
	\begin{equation}
	\label{eq:dec1}
		\begin{split}
				\lefteqn{\mathbb{P}_{\vec{x}_{s,t},\vec{y}_{r,t}}\left(\Delta_t = x_{t+1}, \Delta'_t = y_{t+1} \right) }\phantom{*******}\\
				&\leq \left(1+O\left(\frac{\ell(t)+d}{t}\right)\right)\mathbb{P}_{\vec{x}_{s,t},\vec{y}_{r,t}}\left(\Delta_t = x_{t+1} \right)\mathbb{P}_{\vec{x}_{s,t},\vec{y}_{r,t}}\left(\Delta'_t = y_{t+1} \right),
		\end{split}
	\end{equation}
for all $t > s$. Furthermore, for the special case where~$x_{t+1}=y_{t+1}=0$, we have, also for all~$t > s$,
\begin{equation}
\label{eq:dec2}
\mathbb{P}_{\vec{x}_{s,t},\vec{y}_{r,t}}\left(\Delta_t =0, \Delta'_t = 0 \right)\leq
\mathbb{P}_{\vec{x}_{s,t},\vec{y}_{r,t}}\left(\Delta_t = 0 \right)\mathbb{P}_{\vec{x}_{s,t},\vec{y}_{r,t}}\left(\Delta'_t = 0 \right).
\end{equation}	
\end{lemma}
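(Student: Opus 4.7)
My plan rests on a simple reduction. The events $\{\Delta D_u(v_s) = x_u : s<u\leq t\}$ and $\{\Delta D_u(v_r) = y_u : r<u\leq t\}$ are both $\mathcal{F}_t$-measurable and together pin down $D_t(v_s) = 1+\sum_{u=s+1}^t x_u =: d_v$ and $D_t(v_r) = 1+\sum_{u=r+1}^t y_u =: d_w$, with $d_v, d_w \leq d$ by the sum condition in the definition of $d$-admissibility. By the Markov property of the graph process, the conditional probability $\Pd_{\vec{x}_{s,t},\vec{y}_{r,t}}(\Delta_t = a, \Delta'_t = b)$ and each of its marginals reduce to deterministic functions of $(d_v, d_w, f(t+1))$ given by the one-step transition rule~(\ref{eq:vardeg1})--(\ref{eq:vardeg2}). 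The whole lemma thus collapses to a finite case check over $(a,b) \in \{0,1,2\}^2$.

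Writing $p_v := d_v/(2t)$, $p_w := d_w/(2t)$, and $q := f(t+1)$, the cleanest way to organize the computation is to decompose the edge-step outcome into two iid endpoints $\xi^{(1)},\xi^{(2)}$ drawn from the degree-weighted distribution on $V_t$. Each increment then splits as a sum of two Bernoullis across independent slots, and the only within-slot correlation is the disjointness of $\{\xi^{(i)}=v_s\}$ and $\{\xi^{(i)}=v_r\}$; under a vertex step, at most one of $\Delta_t,\Delta'_t$ can be non-zero. With this picture the cases $(1,2),(2,1),(2,2)$ contribute zero to the joint probability, so the inequalities are trivial there. For $(a,b)=(0,0)$ a direct expansion yields
\begin{equation*}
\mathrm{joint} - \mathrm{product\ of\ marginals} = -p_v p_w\,(2 - 2q + q^2) + O\bigl(p_v p_w (p_v + p_w)\bigr) \leq 0
\end{equation*}
for $d = o(t)$ (the only regime where the stated error is meaningful), giving~(\ref{eq:dec2}). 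For $(1,1)$ the joint equals $(1-q)\cdot 2 p_v p_w$ while the product of marginals is $(2-q)^2 p_v p_w$ at leading order, and since $2(1-q)/(2-q)^2 \leq 1/2$ uniformly on $[0,1]$ the inequality~(\ref{eq:dec1}) holds with room to spare.

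The only cases requiring genuine expansion are the four asymmetric pairs $(1,0),(0,1),(2,0),(0,2)$, where joint and product of marginals agree at leading order. In the $(1,0)$ case I compute $\mathrm{joint} - \mathrm{product} = p_v p_w\,(2-2q+q^2) + O\bigl(p_v p_w (p_v + p_w)\bigr)$, so that the ratio is $1 + p_w\cdot(2-2q+q^2)/(2-q) + O(d^2/t^2) = 1 + O(d/t)$ uniformly in $q \in [0,1]$; the three symmetric cases are handled identically. The $\ell(t)/t$ piece of the stated error absorbs cross-terms in which $q = f(t+1) = \ell(t+1)(t+1)^{-\gamma}$ multiplies a factor of $p_v$ or $p_w$, using $q\leq \ell(t)$ up to a constant for $t$ large. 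The main obstacle I foresee is purely clerical: keeping careful sign-accounting in the $O(p^2)$ corrections across all five non-trivial cases, particularly verifying that in the $(0,0)$ expansion the second-order positive terms $(2-q)(1-q)(p_v p_w^2 + p_v^2 p_w)$ are dominated by the leading negative term $-p_v p_w(2-2q+q^2)$ whenever $d/t$ is small.
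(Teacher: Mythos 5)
Your approach is the same as the paper's: reduce, via the Markov property, to an exhaustive comparison of the joint one-step probability against the product of marginals for each pair $(a,b)\in\{0,1,2\}^2$, using the explicit transition probabilities \eqref{eq:vardeg1}--\eqref{eq:vardeg2}. Your leading-order expansions in the cases $(0,0)$, $(1,1)$, $(1,0)$ agree with the paper's displays, and the observation that $(1,2)$, $(2,1)$, $(2,2)$ have zero joint probability (under an edge step a loop at one vertex precludes hitting the other) correctly disposes of those cases.

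The one place where you deliver less than the statement is \eqref{eq:dec2}: that inequality carries \emph{no} error factor and is asserted for \emph{all} $t>s$, whereas your expansion only yields it when $d=o(t)$, as you yourself note. This is not cosmetic, because in the proof of Lemma~\ref{lemma:decor2} the $(0,0)$ comparison is invoked at all but at most $2d$ of the roughly $t-s$ time steps, and the argument there relies on those steps contributing no multiplicative error at all; a residual factor $1+O\bigl(p_vp_w(p_v+p_w)\bigr)$ per step at small times would have to be tracked separately. The repair is immediate from the two-endpoint decomposition you already set up: with $P=p_v+p_w\le 1$ (since the total degree is $2t$), the joint is $(1-P)\bigl(1-(1-q)P\bigr)$ and the product of marginals is $\bigl(1-P+p_vp_w\bigr)\bigl(1-(1-q)P+(1-q)^2p_vp_w\bigr)$; each factor of the former is nonnegative and dominated by the corresponding factor of the latter, so the exact inequality holds for every $t>s$ with no asymptotics. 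This factorized comparison is precisely the paper's \eqref{eq:d00b}; with it in place your case analysis reproduces the paper's proof. (A minor side remark: the $\ell(t)/t$ contribution to the error in \eqref{eq:dec1} is there to absorb occurrences of $f(t+1)=O(\ell(t)/t)$ on its own for $\gamma\ge 1$, not only in cross-terms with $p_v,p_w$; in your bookkeeping it is never actually needed, and including it is harmless since it only weakens the upper bound.)
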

\begin{proof} The proof is done by direct computation. We compute the probabilities of all possible combinations for $x_{t+1}$ and $y_{t+1}$ in $\{0,1,2\}$ and compare them. We will write the degree~$d_t(v_s)$ in lower case meaning the degree of $v_s$ at time $t$ according to the event induced by the vector~$\vec{x}_{s,t}$, analogously defining~$d_t(v_r)$ for~$v_r$. Note that since the two vectors are $d$-admissible,~$d_t(v_s)$ and~$d_t(v_r)$ are both less than $d$. We have
\begin{equation}\label{eq:d0}
\begin{split}
\mathbb{P}_{\vec{x}_{s,t},\vec{y}_{r,t}}\left(\Delta_t = 0 \right) & =\left(1-\frac{d_t(v_s)}{2t}\right)\left[1-(1-f(t+1))\frac{d_t(v_s)}{2t}\right]
\end{split}
\end{equation}
\begin{equation}\label{eq:d1}
\begin{split}
\mathbb{P}_{\vec{x}_{s,t},\vec{y}_{r,t}}\left(\Delta_t = 1 \right) & = \frac{d_t(v_s)}{2t}\left(f(t+1)+2(1-f(t+1))-2(1-f(t+1))\frac{d_t(v_s)}{2t}\right) \\
&= \frac{d_t(v_s)}{t}\left[1+ O(f(t+1)+dt^{-1})\right]
\end{split}
\end{equation}
And finally
\begin{equation}\label{eq:d2}
\begin{split}
\mathbb{P}_{\vec{x}_{s,t},\vec{y}_{r,t}}\left(\Delta_t = 2 \right) & = (1-f(t+1))\frac{d_t^2(v_s)}{4t^2}.
\end{split}
\end{equation}
Now, we consider the cases in which $\Delta_{u}$ and $\Delta'_{u}$ change simultaneously.
\begin{equation}\label{eq:d00}
\begin{split}
\mathbb{P}_{\vec{x}_{s,t},\vec{y}_{r,t}}\left(\Delta_t = 0, \Delta'_t = 0 \right)
& = \left(1-\frac{d_t(v_s)}{2t}-\frac{d_t(v_r)}{2t}\right)\left[1 - (1-f(t+1))\left(\frac{d_t(v_s)}{2t}+\frac{d_t(v_r)}{2t}\right)\right]
\end{split}
\end{equation}
\begin{equation}\label{eq:d10}
\begin{split}
\lefteqn{\mathbb{P}_{\vec{x}_{s,t},\vec{y}_{r,t}}\left(\Delta_t = 1, \Delta'_t = 0 \right) }\phantom{******}
\\&=
\frac{d_t(v_s)}{2t}f(t+1)+2(1-f(t+1))\frac{d_t(v_s)}{2t}\left(1-\frac{d_t(v_s)}{2t}-\frac{d_t(v_r)}{2t}\right)
\\
&=\frac{d_t(v_s)}{t}(1+O(f(t+1)+dt^{-1})).
\end{split}
\end{equation}
\begin{equation} \label{eq:d11}
\begin{split}
\mathbb{P}_{\vec{x}_{s,t},\vec{y}_{r,t}}\left(\Delta_t = 1, \Delta'_t = 1 \right) & = 2(1-f(t+1))\frac{d_t(v_s)d_t(v_r)}{4t^2}
\end{split}
\end{equation}
Finally
\begin{equation}\label{eq:d20}
\begin{split}
\mathbb{P}_{\vec{x}_{s,t},\vec{y}_{r,t}}\left(\Delta_t = 2, \Delta'_t = 0 \right) & = (1-f(t+1))\frac{d^2_t(v_s)}{4t^2}.
\end{split}
\end{equation}
These cases are enough to cover all possible combinations. The result then follows by a direct comparison between product of probabilities given by (\ref{eq:d0},\ref{eq:d1}, \ref{eq:d2}) with those obtained in (\ref{eq:d00},\ref{eq:d10}, \ref{eq:d11}, \ref{eq:d20}). In particular, we note that from~\eqref{eq:d0} and~\eqref{eq:d00} we obtain
\begin{equation}
\label{eq:d00b}
\begin{split}
\lefteqn{\mathbb{P}_{\vec{x}_{s,t},\vec{y}_{r,t}}\left(\Delta_t = 0 \right)\mathbb{P}_{\vec{x}_{s,t},\vec{y}_{r,t}}\left(\Delta'_t = 0 \right)}\phantom{******}
\\&=
\left(1-\frac{d_t(v_s)}{2t}-\frac{d_t(v_r)}{2t}+\frac{d_t(v_s)d_t(v_r)}{4t^2}\right)
\\
&\quad\times\left[1 - (1-f(t+1))\left(\frac{d_t(v_s)}{2t}+\frac{d_t(v_r)}{2t} \right)+(1-f(t+1))^2 \frac{d_t(v_s)d_t(v_r)}{4t^2} \right]
\\
&\geq 
\mathbb{P}_{\vec{x}_{s,t},\vec{y}_{r,t}}\left(\Delta_t = 0 ,\Delta'_t = 0 \right),
\end{split}
\end{equation}
finishing the proof of the lemma.

\end{proof}
For a fixed vertex $v_s$ and $d \in \N$ define the event
\begin{equation}
	E_{t,d}(v_s) := \left \lbrace D_t(v_s) \le d, Z_s = 1 \right \rbrace.
\end{equation}
In the next lemma we prove that the events $E_{t,d}(v_r)$ and $E_{t,d}(v_s)$ are almost uncorrelated.
\begin{lemma}\label{lemma:decor2} For~$d,r,s,t \in \N$ with~$r<s\leq t$ we have
\begin{equation}
\begin{split}
\Pd\left(E_{t,d}(v_s), E_{t,d}(v_r)\right) & \le  \left(1+ dO\left(\frac{\ell(s)+d}{s}\right)\right)\Pd\left(E_{t,d}(v_s)\right)\Pd\left( E_{t,d}(v_r)\right) .
\end{split}
\end{equation}
\end{lemma}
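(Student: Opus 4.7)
The plan is to disintegrate both events into disjoint unions indexed by the possible degree-increment trajectories of $v_s$ and $v_r$, which correspond exactly to the $d$-admissible vectors, and then invoke Lemma~\ref{lemma:decor1} pointwise to bound the joint trajectory probability against the product of the two individual ones.

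First, for each vector $\vec{x}_{s,t}$ with entries in $\{0,1,2\}$ and coordinate-sum at most $d$, let $\mathcal{E}(\vec{x}_{s,t})$ denote the event $\{Z_s=1\}\cap\bigcap_{u=s}^{t-1}\{\Delta D_u(v_s)=x_{u+1}\}$. These events partition $E_{t,d}(v_s)$, and the analogous decomposition holds for $E_{t,d}(v_r)$, so
\[
\Pd(E_{t,d}(v_s)\cap E_{t,d}(v_r))=\sum_{\vec{x}_{s,t},\,\vec{y}_{r,t}}\Pd(\mathcal{E}(\vec{x}_{s,t})\cap\mathcal{E}(\vec{y}_{r,t})),
\]
with the sum ranging over $d$-admissible pairs, together with analogous sums for the two marginal events. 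The $d$-admissibility conditions are precisely what is needed to make the pair of trajectories jointly realizable while forcing both degrees at time $t$ to be bounded by $d$.

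Second, applying the tower property step by step, together with the crucial observation that the total degree at each time $u$ equals the deterministic quantity $2u$ and therefore the conditional law of $\Delta D_u(v_s)$ depends only on $D_u(v_s)$ itself and not on $D_u(v_r)$, one writes the joint probability and each marginal as products of one-step transition probabilities. The ratio of the joint to the product of marginals then telescopes to
\[
\prod_{u=s}^{t-1}\frac{\Pd_{\vec{x}_{s,t},\vec{y}_{r,t}}(\Delta_u=x_{u+1},\Delta'_u=y_{u+1})}{\Pd_{\vec{x}_{s,t},\vec{y}_{r,t}}(\Delta_u=x_{u+1})\,\Pd_{\vec{x}_{s,t},\vec{y}_{r,t}}(\Delta'_u=y_{u+1})},
\]
and Lemma~\ref{lemma:decor1} bounds each factor by $1$ when $x_{u+1}=y_{u+1}=0$ and by $1+O((\ell(u)+d)/u)$ otherwise. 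Since both vectors have coordinate-sum at most $d$, at most $2d$ indices contribute a non-trivial factor; using the monotonicity of $f$ inherited from $f\in\mathrm{RES}(-\gamma)$ with $\gamma\geq 1$, each such factor is bounded uniformly by $1+O((\ell(s)+d)/s)$, so the entire product is at most $1+dO((\ell(s)+d)/s)$. Summing this uniform bound back through the decomposition from the first step yields the claim.

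The main obstacle will be the uniform control invoked in the last step: one must check that the correction $(\ell(u)+d)/u$ from Lemma~\ref{lemma:decor1} is dominated by its value at $u=s$ along the whole interval $[s,t]$, which relies on the monotonicity of $f$ rather than of $\ell(u)/u$ directly, and then that the product of up to $2d$ such near-one factors does not exceed a total correction of linear order in $d$, which requires $d(\ell(s)+d)/s$ to be bounded (if it is not, the stated inequality is vacuous).
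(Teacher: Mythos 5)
Your proposal follows essentially the same route as the paper: decompose both events over $d$-admissible trajectory vectors, apply Lemma~\ref{lemma:decor1} step by step, observe via its special case \eqref{eq:dec2} that only the at most $2d$ steps with a nonzero increment contribute a factor exceeding $1$, and uniformize those factors using the monotonicity of $f$. The one detail the paper treats that you elide is the birth step $u=s$, where the increment of $v_r$ must be compared conditionally on $Z_s=1$ versus unconditionally; this contributes one additional factor of $1+O((\ell(s)+d)/s)$ and does not affect the conclusion.
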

\begin{proof} Fix two $d$-admissible vectors $\vec{x}_{s,t}=(x_u)_{u=s+1}^t$ and $\vec{y}_{r,t}=(y_u)_{u=r+1}^t$ and denote by~$\Xi(\vec{x}_{s,t})$ the event
\begin{equation}
\Xi(\vec{x}_{s,t}):= \{\Delta_{t-1}= x_{t}\}\cap \cdots \{\Delta_s = x_{s+1}\} \cap \{Z_s = 1\},
\end{equation}
and analogously define~$\Xi(\vec{y}_{r,t})$. Observe that for each $m \in {s+1, \cdots, t}$ we have
\begin{equation}\label{eq:markov}
\mathbb{P}_{\vec{x}_{s,m},\vec{y}_{r,m}}\left(\Delta_m = x_{m+1}\right) = \Pd\left(\Delta_m = x_{m+1} \; \middle | \; \Delta_{m-1} = x_m, \cdots, \Delta_s = x_{s+1}, Z_s = 1\right),
\end{equation}
where~$\mathbb{P}_{\vec{x}_{s,m},\vec{y}_{r,m}}$ is defined for the restrictions of the vectors~$\vec{x}_{s,t}$ and~$\vec{y}_{r,t}$ up to time~$m$. We apply Lemma~\ref{lemma:decor1} iteratively and then use (\ref{eq:markov}) to regroup the terms in a convenient way. For the first step we note that
\begin{equation*}
\begin{split}
\lefteqn{\Pd\left(\Xi(\vec{x}_{s,t}), \Xi(\vec{y}_{r,t})\right)}\\
& \leq\left(1+ O\left(\frac{\ell(t)+d}{t}\right)\right)\mathbb{P}_{\vec{x}_{s,t-1},\vec{y}_{r,t-1}}(\Delta_{t-1} = x_{t})\mathbb{P}_{\vec{x}_{s,t-1},\vec{y}_{r,t-1}}(\Delta'_{t-1} = y_{t})\Pd\left(\Xi(\vec{x}_{s,t-1}), \Xi(\vec{y}_{r,t-1})\right)
\end{split}
\end{equation*}
We iterate this the procedure until $u=s+1$. The case $u=s$ we must handle in a slightly different way. Note that, when $u=s$ we have to deal with the term
\begin{equation}
\begin{split}
\mathbb{P}_{\vec{x}_{s,s-1},\vec{y}_{r,s-1}}\left(\Delta'_s = y_s, Z_s = 1\right) = \mathbb{P}_{\vec{x}_{s,s-1},\vec{y}_{r,s-1}}\left(\Delta'_s = y_s \middle | Z_s = 1\right)f(s),
\end{split}
\end{equation}
since $Z_s$ is independent of $\mathcal{F}_{s-1}$. Now, for $y_s = 1$ we have
\begin{equation}
\begin{split}
\frac{\Pd_{\vec{x}_{s,s-1},\vec{y}_{r,s-1}}\left(\Delta_s ' = 1 \middle | Z_s = 1\right)}{\Pd_{\vec{x}_{s,s-1},\vec{y}_{r,s-1}}\left(\Delta_s '= 1 \right)} & = \frac{\frac{d_{s-1}(v_r)}{2(s-1)}}{f(s)\frac{d_{s-1}(v_r)}{2(s-1)}+2(1-f(s))\frac{d_{s-1}(v_r)}{2(s-1)}\left(1-\frac{d_{s-1}(v_r)}{2(s-1)}\right)} \\
& = \frac{1}{2-f(s)-2(1-f(s))\frac{d_{s-1}(v_r)}{2(s-1)}}
\\
&=\frac{1}{2}(1+O(f(s)+ds^{-1})).
\end{split}
\end{equation}
And for~$y_s = 0$ we get
\begin{equation}
\begin{split}
\frac{\Pd_{\vec{x}_{s,s-1},\vec{y}_{r,s-1}}\left(\Delta_s ' = 0 \middle | Z_s = 1\right)}{\Pd_{\vec{x}_{s,s-1},\vec{y}_{r,s-1}}\left(\Delta_s '= 0 \right)} & = \frac{1-\frac{d_{s-1}(v_r)}{2(s-1)}}{f(s)\left(1-\frac{d_{s-1}(v_r)}{2(s-1)}\right)+(1-f(s))\left(1-\frac{d_{s-1}(v_r)}{2(s-1)}\right)^2} \\
&=(1+O(f(s)+ds^{-1})).
\end{split}
\end{equation}
Iterating the procedure we obtain
\begin{equation*}
\begin{split}
\lefteqn{\Pd\left(\Xi(\vec{x}_{s,t}), \Xi(\vec{y}_{r,t})\right) }
\\& \leq \prod_{u=s}^{t}\left(1+ O\left(\frac{\ell(u)+d}{u}\right)\right)\mathbb{P}_{\vec{x}_{s,u-1},\vec{y}_{r,u-1}}(\Delta_{u-1} = x_{u})\mathbb{P}_{\vec{x}_{s,u-1},\vec{y}_{r,u-1}}(\Delta'_{u-1} = y_{u})\Pd(\Xi(\vec{y}_{r,s-1})).
\end{split}
\end{equation*}
Note that, since~$\vec{x}_{s,t}$ and~$\vec{y}_{r,t}$ are~$d$-admissible, in all but at most~$2d$ steps the increments are both~$0$. Therefore, by~\eqref{eq:d00b} and the fact that~$f(u) = \ell(u)/u$ is nonincreasing, we can use~(\ref{eq:markov}) to regroup separately all terms involving $v_s$ and $v_r$ to obtain
\begin{equation}
\label{eq:unco}
\begin{split}
\Pd\left(\Xi(\vec{x}_{s,t}), \Xi(\vec{y}_{r,t})\right) &\leq \left(1+ O\left(\frac{\ell(s)+d}{s}\right)\right)^{2d}\Pd\left(\Xi(\vec{x}_{s,t})\right)\Pd\left( \Xi(\vec{y}_{r,t})\right)
\\&\leq
\left(1+ dO\left(\frac{\ell(s)+d}{s}\right)\right)\Pd\left(\Xi(\vec{x}_{s,t})\right)\Pd\left( \Xi(\vec{y}_{r,t})\right).
\end{split}
\end{equation}

We can then use the above equation to get
\begin{equation}
\begin{split}
\Pd\left(E_{t,d}(v_s), E_{t,d}(v_r)\right) &=
\sum_{\substack{ \vec{x}_{s,t},\vec{y}_{r,t} \\ d-\text{admissible}}}\Pd\left(\Xi(\vec{x}_{s,t}), \Xi(\vec{y}_{r,t})\right)
\\
&\leq
\left(1+ dO\left(\frac{\ell(s)+d}{s}\right)\right)\sum_{\substack{ \vec{x}_{s,t},\vec{y}_{r,t} \\ d-\text{admissible}}}\Pd\left(\Xi(\vec{x}_{s,t})\right)\Pd\left( \Xi(\vec{y}_{r,t})\right)
\\
&\leq
\left(1+ dO\left(\frac{\ell(s)+d}{s}\right)\right)\sum_{\vec{x}_{s,t},\vec{y}_{r,t}}\Pd\left(\Xi(\vec{x}_{s,t})\right)\Pd\left( \Xi(\vec{y}_{r,t})\right)
\\
&=
\left(1+ dO\left(\frac{\ell(s)+d}{s}\right)\right)\Pd\left(E_{t,d}(v_s)\right)\Pd\left( E_{t,d}(v_r)\right) ,
\end{split}
\end{equation}
finishing the proof of the lemma.
\end{proof}
The next lemma shows that it is hard for earlier vertices to have small degrees.
\begin{lemma}\label{lemma:degoldv} Let $\delta \in (0,1)$ and $d \in \N$. Then, for $ r \le t^{1-\delta}$ we have that
	\[
	\Pd\left( D_t(v_r) \le d \;\;\middle | \;\; Z_r=1\right) \le e^{d}t^{-\delta/4}.
	\]
\end{lemma}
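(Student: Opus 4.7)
The plan is to control $\Ed[D_t(v_r)^{-1}\mid Z_r=1]$ by a supermartingale argument and then conclude by Markov's inequality. Conditioning on $Z_r=1$ fixes $D_r(v_r)=1$, and since the degree is non-decreasing, $D_u(v_r)\geq 1$ for every $u\geq r$.

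The first step is a one-step estimate. Writing $q_i:=\Pr{\Delta D_u(v_r)=i\mid\mathcal{F}_u}$ for $i\in\{0,1,2\}$, direct algebra gives
\[
\Ed\left[D_{u+1}(v_r)^{-1}\mid\mathcal{F}_u\right]=\frac{1}{D_u(v_r)}\left(1-\frac{q_1}{D_u(v_r)+1}-\frac{2q_2}{D_u(v_r)+2}\right).
\]
Substituting the explicit formula for $q_1$ from \eqref{eq:vardeg1}, on the event $\{D_u(v_r)\leq d\}$ with $u\geq u_0:=u_0(d,f)$ chosen so that $f(u+1)\leq 1/4$ and $D_u(v_r)/u\leq 1/4$, one finds $q_1\geq (3/4)D_u(v_r)/u$ (the quadratic correction $(1-f)D_u(v_r)^2/(2u^2)$ absorbs at most $1/8\cdot D_u(v_r)/u$), whence $q_1/(D_u(v_r)+1)\geq 3/(8u)\geq 1/(4u)$ and
\[
\Ed\left[D_{u+1}(v_r)^{-1}\mid\mathcal{F}_u\right]\leq D_u(v_r)^{-1}\left(1-\frac{1}{4u}\right)\quad \text{on }\{D_u(v_r)\leq d,\ u\geq u_0\}.
\]

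The second step introduces the stopping time $T:=\inf\{u\geq r:D_u(v_r)>d\}$, so that $\{D_t(v_r)\leq d\}\subseteq\{T>t\}$. Iterating the contractive estimate above on $\{T>t\}$ from $\max(r,u_0)$ to $t-1$, and using the trivial monotonicity bound $D_u(v_r)^{-1}\leq 1$ on $u\in[r,u_0)$, yields
\[
\Ed\left[D_t(v_r)^{-1}\mathbb{1}\{T>t\}\mid Z_r=1\right]\leq\prod_{u=\max(r,u_0)}^{t-1}\left(1-\frac{1}{4u}\right)\leq\left(\frac{\max(r,u_0)}{t}\right)^{1/4}.
\]
Together with $r\leq t^{1-\delta}$ and $t$ large enough that $u_0\leq t^{1-\delta}$, this is at most $t^{-\delta/4}$, and Markov's inequality gives
\[
\Pr{D_t(v_r)\leq d\mid Z_r=1}\leq d\cdot\Ed\left[D_t(v_r)^{-1}\mathbb{1}\{T>t\}\mid Z_r=1\right]\leq d\,t^{-\delta/4}\leq e^d\,t^{-\delta/4}.
\]
For $t$ below the threshold $u_0^{1/(1-\delta)}$ the inequality is trivial because $e^dt^{-\delta/4}\geq 1$.

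The main obstacle is securing the coefficient $1/(4u)$ in the supermartingale contraction: both the edge-step contribution $f(u+1)/2$ and the quadratic correction in the formula for $q_1$ must be subdominant, which forces the iteration to start only after a threshold $u_0$ depending on $d$ and on the decay rate of $f$. The early regime $u\in[r,u_0)$ is handled by monotonicity, contributing only a multiplicative constant that is absorbed into $e^d$.
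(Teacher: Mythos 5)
Your argument is correct in substance but follows a genuinely different route from the paper's. The paper observes that, conditionally on the past, $\Pd(\Delta D_u(v_r)\geq 1\mid\mathcal{F}_u)\geq D_u(v_r)/(2u)\geq 1/(2u)$, so that $D_t(v_r)$ stochastically dominates a sum of independent $\mathrm{Ber}(1/(2u))$ variables over $u\in[t^{1-\delta},t]$ with mean $\mu_t\geq(\delta/2)\log t$, and then applies a multiplicative Chernoff bound to the lower tail; the factor $e^{d}$ comes from expanding $\e^2\mu_t/2$ with $\e=1-d/\mu_t$. You instead run a supermartingale contraction on $D_u(v_r)^{-1}$, stopped at the first time the degree exceeds $d$, and finish with Markov's inequality. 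Both are sound, and your identity for $\Ed[D_{u+1}(v_r)^{-1}\mid\mathcal{F}_u]$, the stopping-time truncation, the telescoping product bounded by $(\max(r,u_0)/t)^{1/4}$, and the final Markov step all check out. The domination-plus-Chernoff route is slightly more robust, however, because the lower bound $1/(2u)$ on the increment probability holds for every $u$ with no condition on $f$, whereas your contraction constant $1/(4u)$ requires $f(u+1)\leq 1/4$ and $u\geq 4d$, forcing the $f$-dependent threshold $u_0$.

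That threshold creates the one real blemish in your write-up: you dismiss $t<u_0^{1/(1-\delta)}$ as trivial ``because $e^{d}t^{-\delta/4}\geq 1$'', but that inequality only holds for $t\leq e^{4d/\delta}$, and if $f$ decays slowly to $1/4$ then $u_0^{1/(1-\delta)}$ can exceed $e^{4d/\delta}$, leaving a window of intermediate $t$ where your argument yields neither the contraction nor triviality. This is harmless for the lemma's only application (Lemma~\ref{lemma:var}, which uses the bound asymptotically and tolerates a constant depending on $d$ and $f$), and it is repaired by absorbing the finitely many exceptional values of $t$ into a constant; but as written your proof does not deliver the clean constant $e^{d}$ claimed in the statement, while the paper's does.
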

\begin{proof} Let $\tilde{G}_r$ be a possible realization of the process~$(G_t)_{t\geq 1}$ at time~$r$ such that the vertex~$v_r$ belongs to $V(\tilde{G}_r)$, and let~$\Pd_{\tilde{G}_r}$ be the distribution $\Pd$ conditioned on the event where $G_r=\tilde{G}_r$. By the simple Markov property,~$\Pd_{\tilde{G}_r}$ has the same distribution as our model started from~$\tilde{G}_r$. Now from (\ref{eq:d0}) and (\ref{eq:d1}), we obtain, for any step $u\geq r$,
\begin{equation}
\begin{split}
\Pd_{\tilde{G}_r}\left(\Delta D_u(v_r) \ge 1 \middle | \mathcal{F}_u\right) & = (2-f(u+1))\frac{D_u(v_r)}{2u} -(1-f(u+1))\frac{D^2_u(v_r)}{4u^2} \ge \frac{D_u(v_r)}{2u},
\end{split}
\end{equation}
since $D_u(v_r) \le 2u$ deterministically. Using the fact that the degree is at least one, we obtain that~$D_t(v_r)$ dominates a sum of independent random variables $\{Y_u\}_{u=t^{1-\delta}}^t$ where $Y_u \stackrel{\tiny d}{=} \mathrm{Ber}(1/2u)$. Observe that, bounding the sum by the integral, we obtain the following lower bound for the expectation of the degree of~$v_r$ under~$\Pd_{\tilde{G}_r}$
\begin{equation}
\mu_t := \Ed_{\tilde{G}_r} \left[\sum_{u=t^{1-\delta}}^{t}Y_u \right] \ge \frac{\delta}{2}\log t.
\end{equation}
Consequently,
taking $\e$ as
\begin{equation}
\e = 1- \frac{d}{\mu_t}
\end{equation}
and applying the Chernoff bound leads to
\begin{equation}
\begin{split}
\Pd_{\tilde{G}_r}\left( D_t(v_r) \le d \right) & \le  \Pd\left(\sum_{u=t^{1-\delta}}^{t} Y_u \le \left(1-\e\right)\mu_t\right) \le \exp\left \lbrace -\frac{\e^2 \mu_t}{2}\right \rbrace  \le \frac{e^d}{t^{\delta/4}}.
\end{split}
\end{equation}
Integrating over all possible graphs~$\tilde{G}_r$ gives the desired result.
\end{proof}
We have now the ingredients needed in order to bound~$\mathrm{Var}\left(N_t(\le d,f)\right)$, which in turn will let us finish the argument using the Chebyshev inequality, the Borel-Cantelli lemma and an elementary subsequence argument.
\begin{lemma}\label{lemma:var} For any $d \in \N$ and $ f \in \mathrm{RES}(-\g)$, with $\g \in [1, \infty)$, we have
	\[
	\mathrm{Var}\left(N_t(\le d,f)\right) \leq \Ed N_t(\le d,f)(1+o(1))
	\]
\end{lemma}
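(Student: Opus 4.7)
\textbf{Plan for Lemma~\ref{lemma:var}.} I would write
\[
N_t(\le d,f)=\sum_{r=1}^{t}\mathbb{1}_{E_{t,d}(v_r)}
\]
and decompose the variance as
\[
\mathrm{Var}(N_t(\le d,f))=\sum_{r=1}^{t}\mathrm{Var}(\mathbb{1}_{E_{t,d}(v_r)})+2\sum_{1\le r<s\le t}\mathrm{Cov}(\mathbb{1}_{E_{t,d}(v_r)},\mathbb{1}_{E_{t,d}(v_s)}).
\]
The diagonal sum is bounded above by $\sum_r \Pd(E_{t,d}(v_r))=\Ed N_t(\le d,f)$, which already furnishes the leading order of the claimed bound. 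Everything therefore reduces to showing that the total contribution of the cross-terms is $o(\Ed N_t(\le d,f))$.

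For the cross-terms, I would apply Lemma~\ref{lemma:decor2} in the form
\[
\mathrm{Cov}(\mathbb{1}_{E_{t,d}(v_r)},\mathbb{1}_{E_{t,d}(v_s)})\le d\cdot O\!\left(\frac{\ell(s)+d}{s}\right)\Pd(E_{t,d}(v_r))\Pd(E_{t,d}(v_s)),
\]
which is a valid upper bound regardless of the sign of the true covariance. Summing over $r<s$ and using $\sum_{r<s}\Pd(E_{t,d}(v_r))\le \Ed N_t(\le d,f)$ yields
\[
2\sum_{r<s}\mathrm{Cov}(\cdots)\le 2\,\Ed N_t(\le d,f)\cdot S,\qquad S:=\sum_{s=1}^{t} d\cdot O\!\left(\frac{\ell(s)+d}{s}\right)\Pd(E_{t,d}(v_s)).
\]
The task then collapses to proving $S=o(1)$ for each fixed $d$.

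To bound $S$, I would split at $s_0=t^{1-\delta}$ for a small fixed $\delta\in(0,1)$. In the tail $s>s_0$, I would use the crude estimate $\Pd(E_{t,d}(v_s))\le \Pd(Z_s=1)=f(s)=\ell(s)/s^\gamma$ together with $\gamma\ge 1$, so the tail contribution is at most
\[
d\cdot O\!\left(\sum_{s=s_0}^{t}\frac{\ell(s)^{2}+d\,\ell(s)}{s^{1+\gamma}}\right),
\]
which by integral comparison and Karamata's theorem is of order $\ell(s_0)^{2}s_0^{-\gamma}+d\,\ell(s_0)s_0^{-\gamma}$; since $\ell$ is slowly varying and $s_0\to\infty$, this is $o(1)$. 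In the head $s\le s_0$ the probability $\Pd(E_{t,d}(v_s))$ is \emph{not} controlled by $f(s)$ alone, and here Lemma~\ref{lemma:degoldv} supplies the additional saving $\Pd(D_t(v_s)\le d\mid Z_s=1)\le e^{d}t^{-\delta/4}$, yielding
\[
e^{d}t^{-\delta/4}\sum_{s=1}^{s_0} d\cdot O\!\left(\frac{(\ell(s)+d)\ell(s)}{s^{1+\gamma}}\right)= e^{d}t^{-\delta/4}\cdot O_{d,\ell}(1)=o(1),
\]
where the series is finite because slowly varying functions grow subpolynomially and $\gamma\ge 1$. Combining the two contributions gives $S=o(1)$, and hence $\mathrm{Var}(N_t(\le d,f))\le \Ed N_t(\le d,f)(1+o(1))$, as desired.

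The main obstacle is the interplay between the decorrelation estimate and the fact that $\Pd(E_{t,d}(v_s))$ is not automatically small. The decay in $s$ furnished by Lemma~\ref{lemma:decor2} is only of order $1/s$, which fails to be summable against $f(s)$ when $\gamma=1$; it must be combined with the extra decay coming from either $f(s)$ (for recent vertices) or from Lemma~\ref{lemma:degoldv} (for old vertices). The split $s_0=t^{1-\delta}$ is precisely the place where these two complementary sources of decay meet, and getting both halves to be $o(1)$ simultaneously (rather than merely $O(1)$) is the delicate part of the argument.
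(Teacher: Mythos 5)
Your argument is correct and follows essentially the same route as the paper: the same diagonal/cross-term decomposition of the variance, the decorrelation bound of Lemma~\ref{lemma:decor2}, the split of the pairs at $t^{1-\delta}$, and Lemma~\ref{lemma:degoldv} to gain the extra factor $e^{d}t^{-\delta/4}$ for early vertices. The only (harmless) organizational difference is that for the pairs involving recent vertices you use $\Pd(E_{t,d}(v_s))\le f(s)$ and the summability of $(\ell(s)+d)\ell(s)s^{-1-\gamma}$ when $\gamma\ge 1$, whereas the paper instead invokes Proposition~\ref{prop:l1conv} to absorb one factor of $\Ed N_t(\le d,f)$.
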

\begin{proof}By definition, we may write $N_t(\le d,f)$ as
	\begin{equation}
	N_t(\le d,f) = \sum_{s \le t} \mathbb{1}\{D_t(v_s) \le d \}Z_s=\sum_{s \le t} \mathbb{1}\{  E_{t,d}(v_s)  \}.
	\end{equation}
To control $N_t^2(\le d,f)$ we split the sum over $s$ into two sets: the vertices added before $t^{1-\delta}$ and those added after, for some small $\delta$. By Lemma~\ref{lemma:decor2}, for $s>t^{1-\delta}$ and $r<s$,we have that
\begin{equation}
\begin{split}
\lefteqn{ \Pd\left(E_{t,d}(v_s),E_{t,d}(v_r)\right) - \Pd\left(E_{t,d}(v_s)\right)\Pd\left(E_{t,d}(v_r)\right)}\phantom{*********************} \\& \le \frac{C(\ell(t^{1-\delta})+d)}{t^{1-\delta}}\Pd\left(E_{t,d}(v_s)\right)\Pd\left( E_{t,d}(v_r)\right) .
\end{split}
\end{equation}
Thus, 
\begin{equation}
\label{eq:var1}
\begin{split}
\lefteqn{\sum_{s=t^{1-\delta}}^t\sum_{r=1}^{s-1}\Pd\left(E_{t,d}(v_s),E_{t,d}(v_r)\right) - \Pd\left(E_{t,d}(v_s)\right)\Pd\left(E_{t,d}(v_r)\right)}\phantom{*********************}\\ & \le  C\frac{\Ed\left[N_t(\le d,f)\right]^2 (\ell(t^{1-\delta})+d)}{t^{1-\delta}}
\\
&\stackrel{ Lemma~\ref{prop:l1conv}}{\leq}
C\frac{\Ed\left[N_t(\le d,f)\right] (\ell(t^{1-\delta})+d)}{t^{1-\delta}}
\\
&=o(\Ed\left[N_t(\le d,f)\right]).
\end{split}
\end{equation}
Using Lemmas~\ref{lemma:degoldv} and~\ref{lemma:decor2}, and assuming $r<s\leq t^{1-\delta}$ we get
\begin{equation}
\begin{split}
 \Pd\left(E_{t,d}(v_r), E_{t,d}(v_s)\right)
&\leq C\Pd\left(E_{t,d}(v_r)\right) \Pd\left(E_{t,d}(v_s)\right)
\\
& \le C\Pd\left(E_{t,d}(v_s)\right)\Pd\left(D_t(v_r)\le d \middle |Z_r =1\right)f(r) 
\\
& \le C\Pd\left(E_{t,d}(v_s)\right)t^{-\delta/4}f(r).
\end{split}
\end{equation}
We therefore have
\begin{equation}
\sum_{s=1}^{t^{1-\delta}}\sum_{r=1}^{s-1} \Pd\left(E_{t,d}(v_r), E_{t,d}(v_s)\right) \le C\Ed\left[N_t(\le d,f)\right]\frac{F(t^{1-\delta})}{t^{\delta/4}} = o(\Ed\left[N_t(\le d,f)\right]),
\end{equation}
which implies, together with~\eqref{eq:var1},
\begin{equation}
\begin{split}
\mathrm{Var}\left(N_t(\le d,f)\right)&=\sum_{s=1}^t \Pd\left( E_{t,d}(v_s)\right)+2\sum_{s=1}^t\sum_{r=1}^{s-1}\Pd\left(E_{t,d}(v_r), E_{t,d}(v_s)\right)
\\
&\leq
\Ed N_t(\le d,f)+o(\Ed N_t(\le d,f))
\end{split}
\end{equation}
finishing the proof.
\end{proof}
Now, we have all the tools needed to prove Theorem \ref{t:notpowerlaw}
\begin{proof}[Proof of Theorem \ref{t:notpowerlaw}]We only need to prove the case for $\gamma = -1$ and~$F(t)\uparrow\infty$. Otherwise,~$V_t(f)$ is finite almost surely and all vertices are selected infinitely many times with probability one. Given $\e >0$, let $t_k$ be the following deterministic index:
\begin{equation}
t_k = \inf\{ s>0 ; F(s) \ge (1+\e)^k\},
\end{equation}
which exists because we are assuming $F(t)$ goes to infinity. Chebyshev inequality implies then, for every~$\delta>0$,
\begin{equation}
\Pd\left(  \left(\frac{V_{t_k}(f)}{F(t_k)}-1\right) >\delta      \right) \leq \frac{\sum_{s=1}^{t_k}f(s)(1-f(s))}{\delta^2F(t_k)^{2}}\leq \delta^{-2}(1+\varepsilon)^{-k},
\end{equation}
implying that
\[
\frac{V_{t_k}(f)}{F(t_k)}\to 1\quad a.s. \text{ as }k\to\infty.
\]
Combining Lemma~\ref{lemma:var} with the Chebyshev inequality we also get, for any~$\delta>0$,
\begin{equation}
\begin{split}
\Pd\left(  \left(\frac{N_{t_k}(\le d,f)}{F(t_k)}-\frac{\Ed(N_{t_k}(\le d,f))}{F(t_k)}\right) >\delta      \right) \leq \frac{\mathrm{Var}(N_{t_k}(\le d,f))}{\delta^2(1+\varepsilon)^{2k}}\leq C\delta^{-2}(1+\varepsilon)^{-2k},
\end{split}
\end{equation}
and the Borel-Cantelli Lemma together with Proposition~\ref{prop:l1conv} then imply
\begin{equation}
\lim_{t \to \infty } \frac{N_{t_k}(\le d,f)}{F(t_k)} =\lim_{t \to \infty }  \frac{\Ed(N_{t_k}(\le d,f))}{F(t_k)}=0
\end{equation}
almost surely. Now, for~$s\in (t_{k-1},t_k)$ the fact that~$V_t(f)$ and~$F(t)$ are non-decreasing leads to
\begin{equation}
\begin{split}
\frac{V_{s}(f)}{F(s)} &\ge \frac{V_{t_{k-1}}(f)}{(1+\e)^k} \ge \frac{V_{t_{k-1}}(f)}{F(t_{k-1})(1+\e)} > 1-2\e,
\end{split}
\end{equation}
for sufficiently small~$\e$. Therefore, since~$\e$ was chosen arbitrarily,
\begin{equation}
\label{eq:vtvfas}
\frac{V_t(f)}{F(t)}\to 1\quad a.s. \text{ as }t \to \infty,
\end{equation}
implying
\begin{equation}
\lim_{t \to \infty } \frac{N_{t_k}(> d,f)}{F(t_k)} = 1,
\end{equation}
\textit{a.s.} since $N_t(>d,f) = V_t(f) - N_t(\le d,f)$. But observe that~$N_t(>d,f)$ is also non-decreasing in~$t$. Then, for $s \in (t_{k-1},t_k)$ we have that
\begin{equation}
\begin{split}
\frac{N_{s}(> d,f)}{F(s)} &\ge \frac{N_{t_{k-1}}(> d,f)}{(1+\e)^k} \ge \frac{N_{t_{k-1}}(> d,f)}{F(t_{k-1})(1+\e)} > 1-2\e
\end{split}
\end{equation}
for sufficiently small~$\e$, which is enough to conclude that the whole sequence $N_s(>d,f)/F(s)$ converges to $1$ \textit{a.s}, and consequently $N_s(\le d,f)/F(s)$ converges to zero \textit{a.s}.  This, together with~\eqref{eq:vtvfas} concludes the proof.

\end{proof}

\section{Final Remarks}\label{sec:fr}


\subsection{Edge-step functions \textit{VS} Affine PA rule} One of the natural generalizations of the preferential attachment rule proposed in \cite{BA99} is known as the \textit{affine preferential attachment} rule. One may introduce a constant $\delta > -1$ on the PA rule (\ref{def:PArule}) so that the probability of a new vertex $v$ connecting to a previous one $u$ is now given by
\begin{equation}
P\left( v \rightarrow u \middle | G\right) = \frac{degree(u)+\delta}{\sum_{w \in G}(degree(w)+\delta)}.
\end{equation}
This slight modification is capable of producing graphs obeying a power-law degree distribution with a tunable exponent lying on $(2,\infty)$. It would be natural to ask the effects on the degree distribution of an \textit{affine} version of our model, since the addition of $\delta$ may lower the degree distribution's tail whereas the edge-step function may lift it. However, the effect of the edge-step function overcomes the presence of $\delta$ in (\ref{def:PArule}) in the long term. We give here some indications of why this is true for $f \in \mathrm{RES}(-\gamma)$, with $\g \in [0,1]$. 

For an affine version of our model, one may start evaluating the identity (\ref{eq:vardeg1}), which is crucial for proof of Theorem~\ref{thm:supE}, to obtain 
\begin{equation*}
\begin{split}
\mathbb{P}\left(\Delta D_t(v) = 1 \middle | \mathcal{F}_t \right)
& = \left(1-\frac{f(t+1)}{2}\right)\frac{\boldsymbol{D_t(v)+\delta}}{t+V_t(f)\delta} - 2\left(1-f(t+1)\right)\frac{(\boldsymbol{D_t(v)+\delta})^2}{(2t+V_t(f)\delta)^2},
\end{split}
\end{equation*}
However, for $f \in \mathrm{RES}(-\gamma)$, with $\g \in [0,1]$, we have by~\eqref{eq:vtvfas} that $V_t(f)=o(t)$. Thus, the above equation is also equal to
\begin{equation*}
\begin{split} 
\left(1-\frac{f(t+1)}{2}\right)\frac{\boldsymbol{D_t(v)+\delta}}{t}(1+o(1)) - 2\left(1-f(t+1)\right)\frac{(\boldsymbol{D_t(v)+\delta})^2}{4t^2}(1+o(1)).
\end{split}
\end{equation*}
The same goes for (\ref{eq:vardeg2}). The same sort of computation also leads to
\begin{equation*}
\mathbb{E}\left[\Delta D_t(v)  \middle | \mathcal{F}_t \right]  
= \left(1-\frac{f(t+1)}{2}\right)\frac{\boldsymbol{D_t(v)+\delta}}{t}(1+o(1)) ,
\end{equation*}
which is \emph{not the case on the usual affine models}. The above identities imply that many of the recursive computations one usually makes regarding these models are not altered by the introduction of the term~$\delta$. In particular, one can use the above recursion and elementary analysis to show that a process with this affine rule produces graphs with the same power-law exponents as the non affine rule ($\delta=0$), though an analogous result to Theorem~\ref{thm:supE} would be significantly more involved. We therefore opted to focus the results in this paper on the case~$\delta=0$.

\subsection{Maximum degree}
Since all the choices are made following the preferential attachment rule, the first vertices on the graph are good candidates for being the ones of highest degree. In this sense, estimates on their degree usually give the exact order of the maximum degree. In this subsection, we estimate the expected degree for the first vertex in order to argue that the presence of edge-step functions may also shape other graph observables, as the maximum degree.

From equations (\ref{eq:vardeg1}) and (\ref{eq:vardeg2}) one may deduce the recurrence relation below for the expected degree of the very first vertex in the graph
\begin{equation}
\Ed \left[ D_{t+1}(1) \right] = \left(1 + \frac{1}{t}-\frac{f(t+1)}{2t}\right)\Ed \left[ D_{t}(1) \right],
\end{equation}
which implies
\begin{equation}
\Ed \left[ D_{t+1}(1) \right] = 2\prod_{s=2}^t\left(1 + \frac{1}{s}-\frac{f(s+1)}{2s}\right) \approx \exp \left \lbrace \sum_{s=2}^t \left(\frac{1}{s}-\frac{f(s+1)}{2s}\right)\right \rbrace.
\end{equation}
If $f$ is taken to be a \textit{regularly} varying function with index of regular variation $\gamma \in (-\infty,0)$, then, by the Representation Theorem and Karamata's Theorem (Corolary~$\ref{thm:repthm}$ and Theorem~$\ref{thm:karamata}$ respectively) it follows that $\sum_{s}^{\infty}f(s)s^{-1}$ is finite and consequently
\[
\Ed \left[ D_{t+1}(1) \right] \approx t.
\]
For a \textit{slowly} varying $f$, the order of $\Ed \left[ D_{t+1}(1) \right]$ depends on
\[
\int_2^t\frac{f(s)}{s}ds.
\]
If $f(s) = \log^{-1}(s)$ we have that
\[
\Ed \left[ D_{t+1}(1) \right] \approx tf(t).
\]
Whereas, for $f(s) = \log^{-2}(s)$, order $t$ is again achieved. The above discussion suggests that even when $f$ is slowly varying, which produces graphs with power-law exponent equal to~$2$, the maximum degree may still be~$f$ dependent. One interesting question would be to determine the precise order of the maximum degree in terms of $f$ when it is taken to be a slowly varying function.

\appendix
\section{Important results on Regularly Varying Functions}\label{app:rvf}
In this appendix, we collect some results regarding regularly varying functions that will be useful throughout the paper, as well as providing a proof for an earlier lemma. 

\begin{corollary}\label{cor:lFt} Let $\ell$ be a slowly varying function. Then,
	\begin{equation}
	\lim_{t \rightarrow \infty} \frac{\ell(t)}{\sum_{s=1}^t\ell(s)s^{-1}} = 0
	\end{equation}
\end{corollary}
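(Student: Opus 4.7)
The plan is to invoke the uniform convergence theorem for slowly varying functions, which asserts that for every $A>1$ and $\varepsilon>0$ there exists a threshold $T=T(A,\varepsilon)$ with
\[
\left|\frac{\ell(us)}{\ell(s)}-1\right| < \varepsilon \quad \text{uniformly for } s \ge T,\; u \in [A^{-1},A].
\]
The underlying intuition is that on any fixed geometric window $[t/A,t]$, the function $\ell$ is essentially constant (equal to $\ell(t)$), so the partial sum of $\ell(s)/s$ over that window behaves like $\ell(t)\log A$. Since $A$ is at our disposal, this lower bound can be made arbitrarily large compared to $\ell(t)$.

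More precisely, I would fix an arbitrary $M>0$, pick $A>1$ so large that $\tfrac{1}{2}\log A > M$, and then apply the uniform convergence theorem with $\varepsilon=1/2$ and this $A$. This yields a threshold $T=T(A)$ such that $\ell(s) \ge \tfrac{1}{2}\,\ell(t)$ for every $t \ge AT$ and every $s \in [\lceil t/A\rceil,t]$. Bounding the full sum from below by the partial sum over this window and using $\sum_{s=\lceil t/A\rceil}^{t} s^{-1} = \log A + O(1/t)$, one obtains
\[
\sum_{s=1}^{t} \ell(s)s^{-1} \;\ge\; \sum_{s=\lceil t/A\rceil}^{t} \ell(s) s^{-1} \;\ge\; \tfrac{1}{2}\ell(t)\bigl(\log A + o(1)\bigr) \;\ge\; M\,\ell(t)
\]
for all sufficiently large $t$. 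Since $M$ was arbitrary, $\ell(t)/\sum_{s=1}^t \ell(s)s^{-1} \to 0$.

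There is essentially no serious obstacle here: the estimate is a standard Karamata-type boundary case (index $-1$), and since Lemma~\ref{lemma:rvint} handles only $\gamma\in[0,1)$, this corollary fills the logarithmically divergent case that is required by the proof of Proposition~\ref{prop:l1conv}. The only mild technical point is the passage from the integral statement (which is how the uniform convergence theorem is usually applied) to the discrete sum, handled by the elementary identity $\sum_{s=a}^{t}s^{-1}=\log(t/a)+O(a^{-1})$. No case distinction is needed on whether $\sum_s \ell(s)/s$ converges or diverges: the bound above is uniform in those two regimes and actually forces $\ell(t)\to 0$ in the convergent case as a by-product.
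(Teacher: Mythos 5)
Your proposal is correct and follows essentially the same route as the paper: both rest on the uniform convergence theorem for slowly varying functions applied to a geometric window $[t/A,t]$ (the paper writes it as $[\e t, t]$ after passing to the integral), extract a lower bound of order $\ell(t)\log A$ for the sum, and conclude by letting the window width $A$ be arbitrary. The only difference is that you work with the discrete sum directly while the paper first bounds the sum below by the integral and substitutes $s=tx$; this is cosmetic.
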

\begin{proof} Since $s^{-1}\ell(s)$ is a RV function which is eventually monotone, we may bound the sum by the integral. Now, by Theorem $1.5.2$ of~\cite{regvarbook}, for a fixed small $\e$, we know that
	\[
	\lim_{t \to \infty}\frac{\ell(xt)}{\ell(t)} =1
	\]
uniformly for $x \in [\e,1]$. Therefore, for large enough $t$
	\begin{equation}
	\begin{split}
	\ell^{-1}(t)\int_{1}^t\frac{\ell(s)ds}{s} & \ge \int_{\e}^1\frac{\ell(tx)dx}{\ell(t)x} \ge -(1-\delta)\log\e,
	\end{split}
	\end{equation}
	for some small $\delta$. This proves the desired result. 
\end{proof}
The three following results are used throughout the paper.
\begin{corollary}[Representation theorem - Theorem~$1.4.1$ of~\cite{regvarbook}]\label{thm:repthm} Let $f$ be a continuous regularly varying function with index of regular variation $\gamma$. Then, there exists a slowly varying function $\ell$ such that
	\begin{equation}
		f(t) = t^{\gamma}\ell(t),
	\end{equation}
	for all $t$ in the domain of $f$.
\end{corollary}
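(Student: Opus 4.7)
The plan is to reduce the statement to a one-line definition plus a verification. I would define $\ell(t) := f(t)\,t^{-\gamma}$ on the entire domain of $f$, so that the factorisation $f(t)=t^{\gamma}\ell(t)$ holds tautologically. The content of the corollary then collapses to checking that this explicit $\ell$ is a slowly varying function in the sense used earlier in the paper, namely that $\lim_{t\to\infty}\ell(at)/\ell(t)=1$ for every $a>0$.

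To carry out that verification, for each fixed $a>0$ I would write
\[
\frac{\ell(at)}{\ell(t)}=\frac{f(at)\,(at)^{-\gamma}}{f(t)\,t^{-\gamma}}=a^{-\gamma}\cdot\frac{f(at)}{f(t)},
\]
and then invoke the hypothesis that $f$ is regularly varying with index $\gamma$, which gives $f(at)/f(t)\to a^{\gamma}$ as $t\to\infty$. The two powers of $a$ cancel, producing the limit $1$ as required. Continuity of $\ell$ is immediate from continuity of $f$ and of $t\mapsto t^{-\gamma}$ on the domain.

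There is essentially no analytic obstacle here, since the decomposition $f(t)=t^{\gamma}\ell(t)$ is extracted by definition rather than by construction. The only mild subtlety is to remember that regular variation is an asymptotic notion, so one should state that $\ell$ is defined pointwise for all $t$ in the domain while its slow-variation property is an asymptotic statement established by the ratio computation above. The sharper Karamata representation theorem of Bingham--Goldie--Teugels would additionally exhibit $\ell$ in the explicit form $c(t)\exp\!\bigl(\int_{a}^{t}\varepsilon(u)/u\,\mathrm{d}u\bigr)$ with $c(t)\to c>0$ and $\varepsilon(u)\to 0$, but that refinement is not needed for the product decomposition claimed here and would be invoked by citing \cite{regvarbook} if required elsewhere.
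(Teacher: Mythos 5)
Your proof is correct. The paper itself offers no argument for this statement---it is quoted directly from Bingham--Goldie--Teugels---so there is nothing to compare line by line; but your observation is exactly the right one: for the version of the representation theorem actually stated here (the bare factorisation $f(t)=t^{\gamma}\ell(t)$, as opposed to the Karamata integral representation of $\ell$), the decomposition is tautological once one sets $\ell(t):=f(t)t^{-\gamma}$, and the only content is the verification that this quotient is slowly varying, which follows immediately from $\ell(at)/\ell(t)=a^{-\gamma}f(at)/f(t)\to a^{-\gamma}\cdot a^{\gamma}=1$. Two small points worth making explicit: first, the definition of regular variation used in the paper presupposes that $f$ is positive, which is what guarantees that $\ell$ is well defined and positive (for the functions in $\mathrm{RES}(-\gamma)$ this is automatic, since a function that decreases to zero and hits zero at a finite time could not satisfy the ratio limit); second, since the limit defining slow variation is taken as $t\to\infty$, the possible failure of $at$ to lie in the domain $[1,\infty)$ for $a<1$ and small $t$ is harmless, as you note. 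Your closing remark is also accurate: the sharper form $\ell(x)=c(x)\exp\bigl(\int_a^x\varepsilon(u)u^{-1}\,\mathrm{d}u\bigr)$ is what the cited theorem in the reference actually proves, but it is not needed for the statement as used in this paper.
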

\begin{corollary}\label{cor:a3}Let $f$ be a continuous regularly varying function with index of regular variation $\gamma <0$. Then,
	\begin{equation}
		f(x) \to 0,
	\end{equation}
	as $x$ tends to infinity. Moreover, if $\ell$ is a slowly varying function, then for every $\varepsilon >0$
	\begin{equation}
	x^{-\varepsilon} \ell(x) \to 0 \text{ and } x^{\varepsilon}\ell(x) \to \infty
	\end{equation}
\begin{proof}
Comes as a straightforward application of Theorem~$1.3.1$ of~\cite{regvarbook} and~Corollary~\ref{thm:repthm}.
\end{proof}
\end{corollary}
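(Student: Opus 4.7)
The plan is to leverage Corollary~\ref{thm:repthm} (the Representation Theorem) together with the explicit Karamata representation of slowly varying functions, which asserts that any slowly varying $\ell$ can be written in the form
\[ \ell(x) = c(x)\exp\left(\int_a^x \frac{\eta(t)}{t}\,dt\right), \]
where $c(x)$ converges to a positive constant and $\eta(t) \to 0$ as $t \to \infty$. This is Theorem~$1.3.1$ of~\cite{regvarbook} cited in the paper.

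First I would prove the second assertion, since the first is a direct consequence of it. Using the Karamata representation, for any $\varepsilon > 0$ one has
\[ x^{-\varepsilon}\ell(x) = c(x)\exp\left(\int_a^x \frac{\eta(t) - \varepsilon}{t}\,dt\right). \]
Since $\eta(t) \to 0$, there exists $T > a$ such that $|\eta(t)| \leq \varepsilon/2$ for all $t \geq T$. Splitting the integral at $T$ gives a uniformly bounded contribution from $[a,T]$, while the contribution from $[T,x]$ is at most $-(\varepsilon/2)\log(x/T)$. Hence the exponent tends to $-\infty$ and $x^{-\varepsilon}\ell(x) \to 0$. Replacing $-\varepsilon$ by $+\varepsilon$ in the integrand and using the analogous lower bound $+\varepsilon/(2t)$ on $[T,x]$ gives $x^{\varepsilon}\ell(x) \to \infty$.

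The first assertion then follows immediately. By Corollary~\ref{thm:repthm}, any continuous regularly varying $f$ of index $\gamma < 0$ may be written as $f(x) = x^{\gamma}\ell(x)$ with $\ell$ slowly varying. Choosing $\varepsilon = |\gamma|/2$ in the second assertion gives $x^{-|\gamma|/2}\ell(x) \to 0$, and combined with $x^{\gamma/2} = x^{-|\gamma|/2} \to 0$ we obtain $f(x) = x^{\gamma/2}\cdot x^{-|\gamma|/2}\ell(x) \to 0$.

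The only non-trivial ingredient is the Karamata representation itself, which is a classical and standard fact about slowly varying functions; once it is in hand, the rest reduces to elementary asymptotic analysis of an exponential whose exponent is dominated by $\pm\varepsilon\log x$. For this reason the proof is essentially the same as the one-line reference to~\cite{regvarbook} given by the authors, and no serious technical obstacle is expected.
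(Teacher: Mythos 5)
Your proof is correct and follows the same route the paper intends: it unpacks the one-line citation by applying the Karamata representation (Theorem~$1.3.1$ of~\cite{regvarbook}) together with Corollary~\ref{thm:repthm}, and the asymptotic analysis of the exponent is carried out correctly. No gap here.
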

\begin{theorem}[Karamata's theorem - Proposition~$1.5.8$ of~\cite{regvarbook}]\label{thm:karamata} Let $\ell$ be a continuous slowly varying function and locally bounded in $[x_0, \infty)$ for some $x_0 \ge 0$. Then
	\begin{itemize}
		\item[(a)] for $\alpha > -1$
		\begin{equation}
			\int_{x_0}^{x}t^{\alpha}\ell(t)dt \sim \frac{x^{1+\alpha}\ell(x)}{1+\alpha}. 
		\end{equation}
		
		\item[(b)] for $\alpha < -1$
		\begin{equation}
		\int_{x}^{\infty}t^{\alpha}\ell(t)dt \sim \frac{x^{1+\alpha}\ell(x)}{1+\alpha}.
		\end{equation}
	\end{itemize}
\end{theorem}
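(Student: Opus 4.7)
My plan is to prove both parts via a common substitution that transforms the integral into a form amenable to the uniform convergence theorem for slowly varying functions. For part (a), I substitute $t = xu$, $dt = x\,du$, obtaining
\[
\int_{x_0}^{x} t^{\alpha}\ell(t)\,dt = x^{1+\alpha}\int_{x_0/x}^{1} u^{\alpha}\ell(xu)\,du.
\]
Dividing by $x^{1+\alpha}\ell(x)/(1+\alpha)$ reduces the claim to showing
\[
(1+\alpha)\int_{x_0/x}^{1} u^{\alpha}\,\frac{\ell(xu)}{\ell(x)}\,du \xrightarrow{x\to\infty} 1.
\]
Since $\int_{0}^{1} u^{\alpha}\,du = 1/(1+\alpha)$ (which requires $\alpha > -1$), and since $\ell(xu)/\ell(x)\to 1$ pointwise in $u\in(0,1]$ by the definition of slow variation, the conclusion will follow from dominated convergence as soon as an integrable dominating function is produced. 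For part (b), the analogous substitution yields
\[
\int_{x}^{\infty} t^{\alpha}\ell(t)\,dt = x^{1+\alpha}\int_{1}^{\infty} u^{\alpha}\ell(xu)\,du,
\]
and the strategy is identical on $[1,\infty)$, using that $\int_{1}^{\infty} u^{\alpha}\,du$ is finite when $\alpha < -1$.

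The key auxiliary tool is a Potter-type bound on the ratio $\ell(xu)/\ell(x)$. Starting from the Representation Theorem (Corollary~\ref{thm:repthm}), which writes $\ell(t) = c(t)\exp\!\bigl(\int_{a}^{t}\eta(s)s^{-1}\,ds\bigr)$ with $c(t)\to c>0$ and $\eta(s)\to 0$, a direct estimate gives that for every $\delta > 0$ there are constants $A=A(\delta)$ and $X_{0}=X_{0}(\delta)$ such that, whenever $x\ge X_{0}$ and $xu\ge X_{0}$,
\[
\frac{\ell(xu)}{\ell(x)} \le A\,\max\{u^{\delta},\,u^{-\delta}\}.
\]
For part (a) I would pick $\delta$ with $0<\delta<1+\alpha$, so the integrand on $[x_{0}/x,1]$ is dominated by $A\,u^{\alpha-\delta}$, which is integrable on $(0,1]$; dominated convergence then delivers the limit. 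For part (b) I would pick $\delta$ with $0<\delta<-(1+\alpha)$, so that $A\,u^{\alpha+\delta}$ is integrable on $[1,\infty)$, and the same argument works.

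The main obstacle is isolating the region on which the Potter bound is directly applicable from the residual pieces: in case (a) a sliver $[x_{0}/x,\,X_{0}/x]$ near $0$ where $xu < X_{0}$, and in case (b) only an issue of aligning $\delta$ with the tail. In case (a), the sliver has measure $O(1/x)$ and $\ell$ is locally bounded there by hypothesis, so its contribution to the rescaled integral is $O(x^{-(1+\alpha)}/\ell(x))$, which vanishes by Corollary~\ref{cor:a3} applied to the slowly varying function $1/\ell$. Once these splittings are organized, dominated convergence yields the desired asymptotic equivalence in both cases.
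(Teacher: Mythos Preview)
The paper does not actually supply a proof of this theorem: it is simply quoted as Proposition~1.5.8 of \cite{regvarbook}, so there is nothing in the paper to compare your argument against directly. That said, your proposal is the standard route to Karamata's theorem (substitution~$t=xu$, pointwise convergence of~$\ell(xu)/\ell(x)$ to~$1$, domination via a Potter bound, and dominated convergence), and it is essentially correct. In particular, the paper itself invokes exactly the same ingredients---the Potter bound~\eqref{eq:potterthmbound} and dominated convergence---in its proof of Lemma~\ref{lemma:rvint}, so your approach is entirely in line with the paper's toolkit.

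One small point: you attribute the Potter bound to Corollary~\ref{thm:repthm}, but in this paper that label refers only to the statement that a regularly varying~$f$ factors as~$t^{\gamma}\ell(t)$, not to the Karamata integral representation~$\ell(t)=c(t)\exp\bigl(\int_a^t \eta(s)s^{-1}\,\mathrm{d}s\bigr)$ that you actually use. The paper instead cites Potter's Theorem directly (Theorem~1.5.6 of \cite{regvarbook}) when it needs~\eqref{eq:potterthmbound}; you should do the same. Apart from this citation mismatch, the mathematical content of your argument is sound.
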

We finish this section with the proof of an earlier lemma.
\begin{proof}[Proof of Lemma~$\ref{lemma:rvint}$]
	(i) By Potter's Theorem (Theorem~$1.5.6$ of\cite{regvarbook}), if~$\ell$ is slowly varying then for every~$\delta>0$ there exists~$M>0$ such that
	\begin{equation}
	\label{eq:potterthmbound}
	\frac{\ell(x)}{\ell(y)}\leq 2\max\left\{   \frac{x^\delta}{y^\delta}, \frac{y^\delta}{x^\delta}\right\}
	\end{equation}
	for every~$x,y>M$. We have
	\begin{align}
	\nonumber
	\int_{0}^{1} \left| \frac{\ell(ut)}{\ell(t)}-1 \right|u^{-\gamma}\mathrm{d} u
	\nonumber&=
	\int_{0}^{\frac{M}{t}}  \left| \frac{\ell(ut)}{\ell(t)}-1 \right|u^{-\gamma}\mathrm{d} u
	+
	\int_{\frac{M}{t}}^{1} \left| \frac{\ell(ut)}{\ell(t)}-1 \right|u^{-\gamma}\mathrm{d} u.
	\end{align}
	We then obtain
	\[
	\int_{0}^{\frac{M}{t}}  \left| \frac{\ell(ut)}{\ell(t)}-1 \right|u^{-\gamma}\mathrm{d} u\leq  \left( \frac{\sup_{y\in[0,M]}\ell(y)}{\ell(t)}-1 \right)\frac{M^{1-\gamma}}{t^{1-\gamma}(1-\gamma)}\xrightarrow{t\to\infty} 0,
	\]
	by Corollary~$\ref{cor:a3}$. Choosing~$\delta<1-\gamma$ in~\eqref{eq:potterthmbound}, we see that
	\begin{align}
	\nonumber
	\int_{0}^{1} \left| \frac{\ell(ut)}{\ell(t)}-1 \right|u^{-\gamma}\mathbb{1}\{u\geq M/t\}\mathrm{d} u
	\leq 
	\int_{0}^{1} \left(2 \max\{u^{-\delta},u^{\delta}\}-1 \right)u^{-\gamma}\mathrm{d} u<\infty,
	\end{align}
	and therefore the LHS of the above equation tends to~$0$ by the dominated convergence Theorem. This and another elementary application of Corollary~$\ref{cor:a3}$ finish the proof of item~(i).
	
	(ii) We have
	\begin{align}
	\nonumber
	\left| \sum_{k=1}^{t}\ell(k)k^{-\gamma}  -\frac{t^{1-\gamma}\ell(t)}{1-\gamma}     \right|
	&\leq 
	\left| \sum_{k=1}^{t}\ell(k)k^{-\gamma}  -\int_{0}^{t}\ell(s)s^{-\gamma}\mathrm{d} s     \right|
	+
	\left| \int_{0}^{t}\ell(s)s^{-\gamma}\mathrm{d}s   - \ell(t)\cdot\int_{0}^{t} s^{-\gamma}\mathrm{d}s  \right|
	\\ \label{eq:hbound1}
	&\leq C +  \left| \int_{0}^{t}s^{-\gamma}(\ell(s)-\ell(t))\mathrm{d}s  \right|,
	\end{align}
	since~$\ell(s)s^{-\gamma}$ is eventually monotone decreasing. Dividing both sides by~$t^{1-\gamma}\ell(t)$ and making the substitution~$u=st^{-1}$ in the integral gives the result.
\end{proof}

\section{Martingales concentration inequalities}\label{a:martingconc}
For the sake of completeness we state here two useful concentration inequalities for martingales which are used throughout the paper.
\begin{theorem}[Azuma-H\"{o}ffeding Inequality - \cite{CLBook}]\label{t:azuma} Let $(M_n,\mathcal{F})_{n \ge 1}$ be a (super)martingale satisfying
\[ 
\lvert M_{i+1} - M_i \rvert \le a_i
\]
Then, for all $\l > 0 $ we have
\[
\P \left( M_n - M_0 > \l \right) \le \exp\left( -\frac{\l^2}{\sum_{i=1}^n a_i^2} \right).
\]
\end{theorem}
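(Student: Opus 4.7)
The plan is to use the standard Chernoff-type exponential moment method adapted to martingale differences. Let $Y_i := M_i - M_{i-1}$, so $|Y_i| \le a_i$ and, by the supermartingale assumption, $\Ed[Y_i \mid \mathcal{F}_{i-1}] \le 0$. For any $t > 0$, Markov's inequality applied to $e^{t(M_n - M_0)}$ gives $\Pr{M_n - M_0 > \lambda} \le e^{-t\lambda}\, \Ed[e^{t(M_n - M_0)}]$, so the task reduces to controlling this MGF uniformly over $t$.

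The core ingredient is the conditional form of Hoeffding's lemma: if $X$ is a random variable with $\Ed[X\mid \mathcal{G}] \le 0$ and $X \in [-a,a]$ almost surely, then $\Ed[e^{tX}\mid \mathcal{G}] \le e^{t^2 a^2/2}$. I would prove this by using convexity of the exponential on $[-a,a]$ to write $e^{tx} \le \tfrac{a-x}{2a}e^{-ta} + \tfrac{a+x}{2a}e^{ta}$, taking conditional expectation (dropping the nonpositive linear term in $X$ thanks to $\Ed[X\mid\mathcal{G}]\le 0$), and checking analytically that $\cosh(ta) \le e^{t^2 a^2/2}$ by a Taylor-series comparison.

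Applying this lemma by iteratively conditioning on $\mathcal{F}_{n-1}, \mathcal{F}_{n-2}, \ldots, \mathcal{F}_0$ and using the tower property yields $\Ed[e^{t(M_n - M_0)}] \le \exp\bigl(\tfrac{t^2}{2}\sum_{i=1}^n a_i^2\bigr)$. Substituting back and optimizing over $t > 0$ at $t = \lambda/\sum_i a_i^2$ produces $\Pr{M_n - M_0 > \lambda} \le \exp\bigl(-\lambda^2/(2\sum_i a_i^2)\bigr)$.

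The main obstacle is precisely the factor of $2$ in the denominator: the Chernoff-Hoeffding method above is essentially tight, since a symmetric $\pm a$ random walk asymptotically saturates it via the CLT. Thus no routine variant of this argument can drop that $2$ to match the statement as literally written, which is strictly stronger than the classical Azuma-Hoeffding bound. I would therefore write up the proof producing $\exp(-\lambda^2/(2\sum_i a_i^2))$ and flag the missing factor of $2$ in the theorem statement as an apparent typo, noting that this is also the form actually proved in the cited reference~\cite{CLBook}; since the theorem is only used downstream as a black-box concentration tool with loose constants, this correction does not affect any subsequent argument in the paper.
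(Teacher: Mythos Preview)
Your proof is correct and is the standard Chernoff--Hoeffding argument; there is nothing to compare it against, because the paper does not prove this theorem at all. It is stated in Appendix~\ref{a:martingconc} purely for reference, with attribution to~\cite{CLBook}, alongside Freedman's inequality. Your identification of the missing factor of~$2$ in the denominator is also correct: the classical Azuma--Hoeffding bound is $\exp\bigl(-\lambda^2/(2\sum_i a_i^2)\bigr)$, and this is what appears in the cited source. As you note, the discrepancy is immaterial here --- in fact Theorem~\ref{t:azuma} is never actually invoked in any proof in the paper; it is mentioned only in the discussion at the start of Section~\ref{sec:conc} as the ``usual approach'' that the authors replace by Freedman's inequality.
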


\begin{theorem}[Freedman's Inequality - \cite{F75}]\label{teo:freedman} Let $(M_n, \mathcal{F}_n)_{n \ge 1}$ be a (super)martingale. Write 
\begin{equation}\label{def:quadvar}
W_n := \sum_{k=1}^{n-1} \mathbb{E} \left[(M_{k+1}-M_k)^2\middle|\mathcal{F}_k \right]
\end{equation}
and suppose that $M_0 = 0$ and 
\[ 
\lvert M_{k+1} - M_k \rvert \le R, \textit{ for all }k.
\]
Then, for all $\l > 0 $ we have
\[
\P \left(   M_n \ge \l, W_n \le \s^2, \textit{ for some }n\right) \le \exp\left( -\frac{\l^2}{2\s^2 + 2R\l /3} \right). 
\]
\end{theorem}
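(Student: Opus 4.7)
The plan is to apply the standard Bernstein exponential-martingale trick. Setting $Y_k := M_k - M_{k-1}$, we have $|Y_k| \le R$ and $\mathbb{E}[Y_k \mid \mathcal{F}_{k-1}] \le 0$ by the supermartingale hypothesis. I would exhibit a nonnegative auxiliary supermartingale of the form $Z_n := \exp(\theta M_n - \psi(\theta) W_n)$ for a convex $\psi$ and parameter $\theta > 0$ in a suitable range, then compare $Z_\tau$ to $Z_0 = 1$ at a well-chosen stopping time.

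The main analytic input is a Bernstein-type MGF bound. For $|y| \le R$ and $\theta R < 3$, the Taylor expansion of $e^{\theta y}$ combined with the elementary inequality $(k+2)! \ge 2 \cdot 3^k$ for $k \ge 0$ gives
\[
e^{\theta y} \le 1 + \theta y + \frac{\theta^2 y^2 / 2}{1 - \theta R / 3}.
\]
Applying this pointwise to $Y_k$, taking conditional expectation, using $\mathbb{E}[Y_k \mid \mathcal{F}_{k-1}] \le 0$, and then $1 + x \le e^x$ yield
\[
\mathbb{E}\!\left[\exp\!\left(\theta Y_k - \frac{\theta^2 / 2}{1 - \theta R / 3}\, \mathbb{E}[Y_k^2 \mid \mathcal{F}_{k-1}]\right) \,\Big|\, \mathcal{F}_{k-1}\right] \le 1.
\]
Telescoping this over $k$ confirms that $Z_n := \exp\!\bigl(\theta M_n - \tfrac{\theta^2/2}{1-\theta R/3}\, W_n\bigr)$ is a supermartingale with $Z_0 = 1$.

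With the supermartingale in hand, I would introduce the stopping time $\tau := \inf\{n : M_n \ge \lambda \text{ and } W_n \le \sigma^2\}$. On $\{\tau < \infty\}$ one has $Z_\tau \ge \exp\!\bigl(\theta \lambda - \tfrac{\theta^2 \sigma^2/2}{1 - \theta R/3}\bigr)$. Optional stopping applied at $\tau \wedge N$, letting $N \to \infty$ and using Markov's inequality, gives
\[
\mathbb{P}(\tau < \infty) \le \exp\!\left(-\theta \lambda + \frac{\theta^2 \sigma^2 / 2}{1 - \theta R / 3}\right).
\]
I then optimize over $\theta \in (0, 3/R)$. The choice $\theta = \lambda / (\sigma^2 + R\lambda/3)$ (which automatically satisfies $\theta R/3 = R\lambda/(3\sigma^2 + R\lambda) < 1$) makes $1 - \theta R/3 = 3\sigma^2/(3\sigma^2 + R\lambda)$, and a short algebraic simplification shows the exponent equals $-\lambda^2/(2\sigma^2 + 2R\lambda/3)$, matching the stated bound.

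The main obstacle is really the sharp form of the MGF inequality: one must get the specific $1/(1 - \theta R/3)$ denominator in the quadratic term, since cruder bounds (e.g., $e^{\theta y} \le 1 + \theta y + \theta^2 R^2 / 2$) would give Hoeffding- or Bennett-style exponents rather than the Bernstein form $2\sigma^2 + 2R\lambda/3$ in the conclusion. The combinatorial lemma $(k+2)! \ge 2 \cdot 3^k$ is what drives this refined denominator; once it is in place, the remaining ingredients (supermartingale verification, optional stopping, Markov, and a one-variable optimization) are routine.
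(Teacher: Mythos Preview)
Your proof is correct and follows the standard exponential-supermartingale argument going back to Freedman's original paper. Note, however, that the paper does \emph{not} prove this theorem: it is stated in Appendix~B purely ``for the sake of completeness'' and attributed to \cite{F75}, with no proof given. There is therefore nothing in the paper to compare your argument against; you have supplied a full proof where the authors simply quote the result.
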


{\bf Acknowledgements } 
C.A.  was supported by the Deutsche Forschungsgemeinschaft (DFG). R.R. was partially supported by Conselho Nacional de Desenvolvimento Cient\'\i fico e Tecnol\'{o}gico (CNPq).  R.S. has been partially supported by Conselho Nacional de Desenvolvimento Cient\'\i fico e Tecnol\'{o}gico (CNPq)  and by FAPEMIG (Programa Pesquisador Mineiro), grant PPM 00600/16. 
\bibliography{ref}
\bibliographystyle{plain}

\end{document}